\newtheorem{thm}[equation]{Theorem}
\newtheorem*{thm*}{Theorem}
\newtheorem*{lem*}{Lemma}
\newtheorem{lem}[equation]{Lemma}
\newtheorem{prop}[equation]{Proposition}
\newtheorem*{prop*}{Proposition}
\newtheorem{cor}[equation]{Corollary}
\newtheorem{conj}[equation]{Conjecture}
\theoremstyle{definition}
\newtheorem{defn}[equation]{Definition}
\newtheorem{rmk}[equation]{Remark}
\newtheorem{ex}[equation]{Example}
\numberwithin{equation}{section}
\newcommand{\I}{\mathcal{I}}
\newcommand{\noep}{\delta}
\newcommand{\edual}{\tilde{e}}
\newcommand{\espir}{e}
\newcommand{\vspir}{v}
\newcommand{\ek}{\check{e}}
\newcommand{\vk}{\check{v}}
\newcommand{\nk}{\check{n}}
\title{Extremal $\{p, q\}$-Animals}
\author[G.~Malen]{Greg Malen}
\address[G.~Malen]{Department of Mathematics, Union College, Schenectady, NY, United States}
\email{maleng@union.edu}
\author[\'E.~Rold\'an]{\'Erika Rold\'an}
\address[\'E.~Rold\'an]{Zentrum Mathematik, TU M\"unchen, Garching b. M\"unchen, Germany}
\email{erika.roldan@ma.tum.de}
\author[R.~Toal\'a-Enr\'iquez]{Rosemberg Toal\'a-Enr\'iquez}
\address[R.~Toal\'a-Enr\'iquez]{Adjunct Researcher, Topological Data Analysis Group, Center for research in Mathematics, Guanajuato, M\'exico.}
\email{toala@cimat.mx}
\begin{document}
\begin{abstract}
An animal is a planar shape formed by attaching congruent regular polygons, known as tiles, along their edges. In this paper, we study extremal animals defined on regular tessellations of the plane. In 1976, Harary and Harborth studied animals in the Euclidean cases, finding extremal values for their vertices, edges, and tiles, when any one of these parameters is fixed. Here, we generalize their results to hyperbolic animals. For each hyperbolic tessellation, we exhibit a sequence of spiral animals and prove that they attain the minimal numbers of edges and vertices within the class of animals with $n$ tiles. In their conclusions, Harary and Harborth also proposed the question of enumerating extremal animals with a fixed number of tiles. This question has previously only been considered for Euclidean animals. As a first step in solving this problem, we find special sequences of extremal animals that are \textit{unique} extremal animals, in the sense that any animal with the same number of tiles which is distinct up to isometries can't be extremal.
\end{abstract}
%https://zbmath.org/static/msc2020.pdf
\subjclass[2020]{00A69, 05A16, 05A20, 05B50, 52B60, 52C05,  05C07, 05C10, 52C20, 05D99}
\keywords{Extremal combinatorics, enumerative combinatorics, polyforms, polyominoes, polyiamonds, hexiamonds, animals, hyperbolic tessellations, isoperimetric inequalities. \\
* See \cite{beautifulanimals} for our interactive applets to explore extremal animals and their graph parameters.}

\maketitle

\section{Introduction}

An \emph{animal} is a planar shape with connected interior formed by gluing together a finite number of congruent regular polygons along their edges. Here, we study animals that have extremal combinatorial properties such as having maximally many shared (or interior) edges for a given number of polygons. We refer to animals attaining these optimal values as \emph{extremal animals}. In the Euclidean plane, extremal animals have been well studied and it is known that they exhibit interesting geometric and combinatorial properties such as being isoperimetric \cite{harary1976extremal}, giving contact numbers for optimal disk packing configurations \cite{harborth1974losung}, and providing the right shape to produce animals with maximally many holes \cite{KaR,MaR, malen2021extremalI, malen2021extremalII}. 

In the hyperbolic plane, combinatorial and geometric properties of extremal animals have not been examined as closely. However, the relevance of extremal animals in this setting to other extremal combinatorial problems has already been established. These include optimal disk packing problems \cite{bowen2002circle}, the calculation of Cheeger constants \cite{higuchi2003isoperimetric}, the establishment of the exponential growth constant of regular hyperbolic tessellations \cite{keller2008geometric}, and the implementation of algorithms to sample certain hyperbolic animals to approximate hyperbolic percolation thresholds \cite{mertens2017percolation}.

Recently, extremal hyperbolic animals have been used to model and study structures in applied mathematics, such as in crystallography theory for materials with hyperbolic symmetries \cite{boettcher2021crystallography}, and in the architecture of circuit quantum electrodynamics \cite{kollar2019hyperbolic}. More generally,
models based on hyperbolic tessellations have also been used in areas such as Bloch band theory in physics \cite{maciejko2020hyperbolic}, and in quantum error correcting and quantum storage codes \cite{breuckmann2017hyperbolic, jahn2021holographic}, among others. Therefore, having a better understanding of extremal hyperbolic animals might lead to interesting insights, results, techniques, and new research questions in these areas. %all of the aforementioned applications.

The families of animals that we study in this paper are formed by finite subsets of tiles of regular Euclidean and hyperbolic tessellations. These tessellations can be parameterized using the Schläfli symbol $\{p,q\}$, where $p$ denotes the number of sides of the regular polygon forming the tessellation and $q$ is the number of edges or tiles meeting at each vertex. It is well known that if $(p-2)(q-2)> 4$, $=4$, or $<4$, then the tessellation corresponds to the geometry of the hyperbolic plane, the Euclidean plane, or the sphere, respectively. We call an animal living in a $\{p,q\}$-tessellation a $\{p,q\}$-animal. The most well-known families of animals are those living in the Euclidean tessellations, namely for $\{p,q\}$ either $\{3,6\}$, $\{4,4\}$, or $\{6,3\}$. Formed by triangles, squares, or hexagons, respectively, these animals are also more commonly known as polyiamonds, polyominoes, and polyhexes in the literature.

%The most famous families of animals are the $\{3,6\}$, $\{4,4\}$, and $\{6,3\}$-animals. These are formed, respectively by a finite subset of  triangles, squares, or hexagons of the three regular tessellations of the Euclidean plane, and are also known as polyiamonds, polyominoes, and polyhexes. 

In 1976 \cite{harary1976extremal}, Harary and Harborth studied extremal animals in the three Euclidean cases. %$\{3,6\}$, $\{4,4\}$, and $\{6,3\}$-animals. 
More specifically, they gave algebraic expressions and bounds for the number of interior, exterior, and total numbers of both vertices and edges that an animal with a fixed number of tiles can have. Then they constructed a family of \textit{spiral} animals which attain optimal values. Here, we extend Harary and Harborth's analysis to extremal hyperbolic animals. First, we define and analyze analogous sequences of \textit{spiral} $\{p,q\}$-animals for $(p-2)(q-2) \geq 4$ and provide algebraic expressions for the number of vertices and edges in terms of a particular recursive sequence of integers. Then we prove that these \textit{spiral} $\{p,q\}$-animals are indeed extremal. As a corollary of our results, we recover the formulas derived in \cite{harary1976extremal} for the Euclidean cases.

%We will recover, as a corollary of our results, the formulas derived in \cite{harary1976extremal} for the Euclidean cases.
%First, we define and analyze analogous sequences of \textit{spiral} animals $S_{p,q}(n)$ for $n\geq 1$ of $\{p,q\}$-animals for $(p-2)(q-2) \geq 4$ and we provide algebraic expressions for their parameters in terms of a particular recursive sequence of integers; then, we prove that these \textit{spiral} $\{p,q\}$-animals are indeed extremal. We recover, as a corollary of our results, the formulas derived in \cite{harary1976extremal} for the Euclidean cases.

We also explore enumeration of extremal hyperbolic animals. Exact enumeration of $\{p,q\}$-animals is only known for a few $\{p,q\}$ pairs, and even then only for animals with a very small number of tiles. For instance, the number of fixed $\{4,4\}$-animals with $n$ tiles is only known for $n \leq 56$ \cite{jensen2003counting}; and otherwise very little is known (see, for example, the entries of the OEIS A001207, A001420, A000228, A000577, A119611 \cite{sloane2003line} for values in the $\{3,6\}$-, $\{6,3\}$-, and $\{4,5\}$-tessellations). In the particular case of Euclidean animals, there are also asymptotic results and computational approximations that give a better understanding of the problem of animal enumeration \cite{ barequet2021improved,barequet2019improved, mansour2021enumeration, shalah2017formulae}. %In 1976 \cite{harary1976extremal}, 
In their closing thoughts in \cite{harary1976extremal}, Harary and Harborth raised the question of enumeration of extremal animals with $n$ tiles. Surprisingly, this problem has remained largely unexplored, and it has been completely solved only for $\{4,4\}$-animals \cite{kurz2008counting}. Before now, nothing was known about the enumeration of extremal hyperbolic animals. As a consequence of our main results, for all $\{p,q\}$ we find special sequences of extremal animals that are \textit{unique} extremal animals, in the sense that any animal with the same number of tiles which is distinct up to isometries can't be extremal. 

%Moreover, we establish a conjecture of the family of $\{p,q\}$-animals that have this uniqueness property. 

%In here \cite{keller2008geometric} they call $\mu(G)=\log\left( p - \frac{2}{q-2} +\sqrt{\left( p - \frac{2}{q-2} \right)^2 -1}\right)$ the \textit{exponential growth} of the graph, and it is the limit . 
\vskip.5cm
\subsection{Main Results}

We begin by analyzing a sequence of layered animals, which we use as benchmarks for computing extremal values. These layered animals will attain extremal values, but more significantly they provide context for keeping track of how each subsequent tile affects all of the graph parameters in play.
\vskip.75cm
\begin{rmk}
Note that in a $\{p,q\}$-animal, a vertex can have up to $q$ incident tiles. If it has fewer than $q$ incident tiles, it is necessarily a perimeter vertex. If it has a full set of $q$ incident tiles, it is an interior vertex, and we say that it is \emph{saturated}.
\end{rmk}

\newpage
\begin{defn} \label{defn:completelayers}
We denote by $A_{p,q}(1)$ the animal with only one $p$-gon. We construct $A_{p,q}(k)$ from $A_{p,q}(k-1)$ by adding precisely the tiles needed to saturate all perimeter vertices of $A_{p,q}(k-1)$. The new tiles are called the \emph{$k$-th layer} of the structure. We call $A_{p,q}(k)$ the \emph{complete $k$-layered $\{p,q\}$-animal}.
\end{defn}

\begin{figure}[]
    \centering
    \includegraphics[scale=.6]{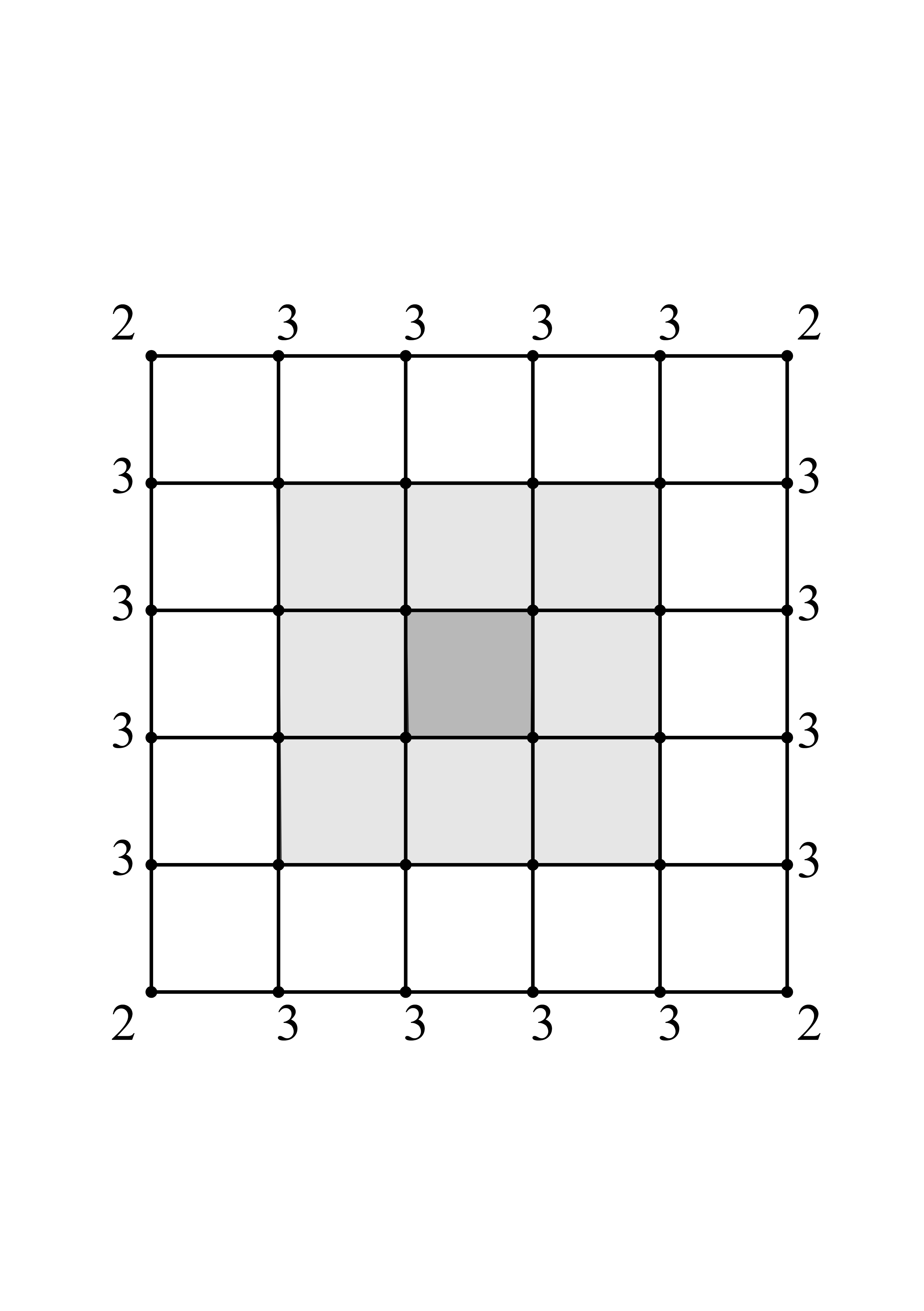}
    \includegraphics[scale=3]{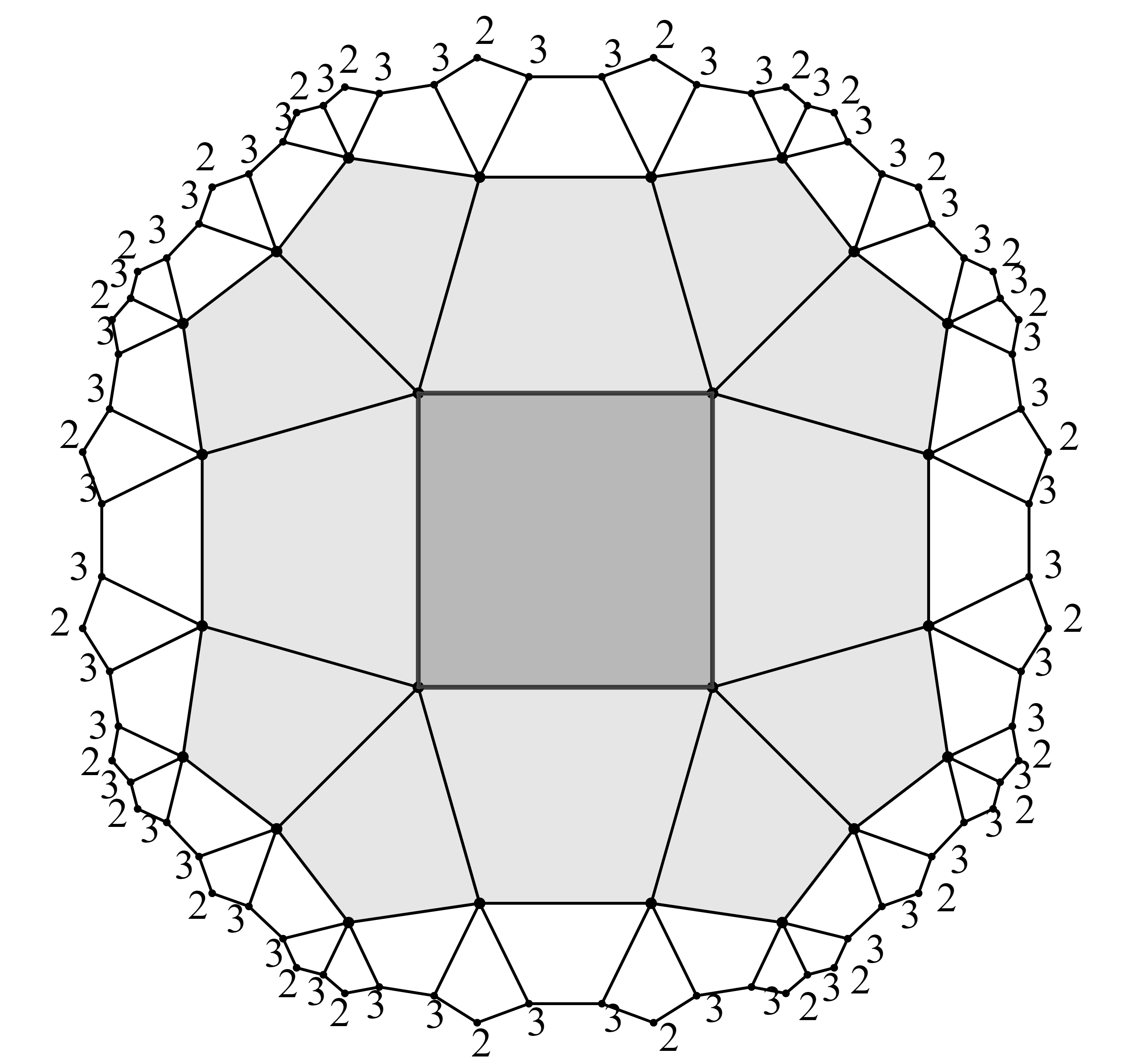}
    \caption{\textit{Left:} $A_{p,q}(3)$ for $\{p,q\}=\{4,4\}$. \textit{Right:}  $A_{p,q}(3)$ for $\{p,q\}=\{4,5\}$. The degree of each perimeter vertex--the number of edges that are incident to the vertex--is written next to it. The case $\{4,5\}$ is hyperbolic, and it is drawn in the Poincaré disk.}
    \label{fig:A3}
\end{figure}

%Do we want to say something about the first main result?

\begin{thm}     \label{thm:layers}
Denote by $\vk(k)$, $\ek(k)$, and $\nk(k)$ the number of vertices, edges, and tiles, respectively, of the complete $k$-layered $\{p,q\}$-animal $A_{p,q}(k)$. Let $t=(p-2)(q-2)-2$ and $\alpha=\frac{t+\sqrt{t^2-4}}{2}$, then 

    \begin{enumerate}
    \item For t=2, we have Euclidean animals, and they satisfy
        \begin{align*}
       \vk(k) &= p k^2,   \\
        \ek(k) & = pk \left( \frac{q}{2}k-\frac{q}{2}+1\right),    \\
        \nk(k) &= 1+ \frac{p(q-2)}{2}k^2 - \frac{p(q-2)}{2}k.
%        v_{int}(k) &=p (k-1)^2      \\
%        e_{1}(k) &= p(2k-1)     \\
%        e_2(k) &= p(k-1)\left( \frac{q}{2}k-1\right)    \\
        \end{align*}
    \item For $t>2$, we have hyperbolic animals, and they satisfy
        \begin{align*}
        \vk(k)&=  \frac{p}{t-2} \left( \alpha^k+ \frac{1}{\alpha^k} -2\right),  \\
        \ek(k)    &=   \frac{p}{(\alpha-1)\sqrt{t^2-4}} \left( (\alpha+q-1)\alpha^k+\frac{\alpha q -\alpha+1}{\alpha^k} - q(\alpha+1) \right),    \\
        \nk(k)&= 1 + \frac{p(q-2)}{(\alpha-1)\sqrt{t^2-4}}  \left( \alpha^k + \frac{1}{\alpha^{k-1} } -\alpha-1   \right).  
%      v_{int}(k) & = \frac{p}{t-2} \left( \alpha^{k-1}+\frac{1}{\alpha^{k-1}} -2\right)        
%        \ek_1(k) &=  \frac{p(\alpha+1)}{\sqrt{t^2-4}} \left( \alpha^{k-1} -\frac{1}{\alpha^k} \right)     \\
%       e_2(k)&= \frac{p}{(\alpha-1) \sqrt{t^2-4}} \left( (\alpha q-\alpha+1) \alpha^{k-1} + \frac{q-1+\alpha}{\alpha^{k-1}} -q(\alpha+1) \right)     \\       
        \end{align*}
   \end{enumerate}
We also find algebraic expressions in Equation (\ref{eq:allAparams}) for the following graph parameters: the number of interior vertices, perimeter edges, and interior edges of $A_{p,q}(k)$, denoted respectively by $\vk_{int}(k)$, $\ek_1(k)$, $\ek_2(k)$.
\end{thm}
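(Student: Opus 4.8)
The plan is to express every quantity in the statement in terms of just two sequences: the tile count $\nk(k)$ and the \emph{perimeter length} $m_k$, meaning the common number of edges and of vertices of the boundary cycle of $A_{p,q}(k)$. Euler's formula for the planar graph $A_{p,q}(k)$ together with a count of edge--face incidences (the $\nk(k)$ bounded faces are $p$-gons and the unbounded face is an $m_k$-gon) give
\[
\vk(k)-\ek(k)+\nk(k)+1=2,\qquad 2\,\ek(k)=p\,\nk(k)+m_k ,
\]
hence $\ek(k)=\tfrac12\big(p\,\nk(k)+m_k\big)$, $\vk(k)=1+\tfrac12\big((p-2)\,\nk(k)+m_k\big)$, and similarly $\ek_1(k)=m_k$, $\ek_2(k)=\ek(k)-m_k$, $\vk_{int}(k)=\vk(k)-m_k$, which will be Equation~(\ref{eq:allAparams}). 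So the whole problem reduces to closed forms for $\nk(k)$ and $m_k$, and for that I would extract a linear recursion from the local combinatorics of the layering operation.

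\emph{Step 1 (boundary structure).} By induction on $k$ I would prove that $\partial A_{p,q}(k)$ is a simple closed cycle and that, for $k\ge 2$, every perimeter vertex carries exactly one or two tiles when $p\ge 4$ (respectively two or three tiles when $p=3$); call these \emph{corner} and \emph{side} vertices and let $x_k,y_k$ count them, so $m_k=x_k+y_k$. The inductive step is a local analysis at each perimeter vertex $v$ of $A_{p,q}(k-1)$ carrying $d_v$ tiles: saturating $v$ means adjoining the $q-d_v$ tiles of the $\{p,q\}$-tessellation around $v$ not yet present, two of which are the tiles glued along the two perimeter edges at $v$ (and are shared with the two boundary neighbours of $v$), the remaining $q-d_v-2$ being ``corner-fill'' tiles; by tracking the far vertices of all of these tiles one reads off exactly which new vertices lie on $\partial A_{p,q}(k)$, how many tiles each carries, and that the old perimeter vertices have become saturated. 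Since adjoining a tile only ever means taking the unique cell of the fixed tessellation in the prescribed position, there is nothing to verify about overlaps; what must be checked is that the region remains a topological disk and that no third vertex type is ever created.

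\emph{Step 2 (recursion and closed forms).} The rules of Step 1 express $x_k$ and $y_k$ as fixed nonnegative-integer combinations of $x_{k-1}$ and $y_{k-1}$, that is, $\binom{x_k}{y_k}=M\binom{x_{k-1}}{y_{k-1}}$ for a $2\times 2$ integer matrix $M$, with the initial vector coming from $A_{p,q}(1)$ (a single $p$-gon) plus a one-step adjustment when $p=3$. A short computation gives $\operatorname{tr}M=(p-2)(q-2)-2=t$ and $\det M=1$, so $M$ has characteristic polynomial $\lambda^2-t\lambda+1$, with roots $\alpha$ and $\alpha^{-1}$, exactly the $\alpha$ of the statement. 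By Cayley--Hamilton $M^2=tM-I$, so $m_k$ --- and also the layer size $\Delta\nk(k):=\nk(k)-\nk(k-1)$, which is a second explicit linear functional of $(x_{k-1},y_{k-1})$ (one tile per perimeter edge of $A_{p,q}(k-1)$ plus the corner-fill tiles) --- obeys $u_{k+1}=t\,u_k-u_{k-1}$. For $t>2$ this forces $m_k=a\alpha^k+b\alpha^{-k}$ with $a,b$ fixed by the first two values; for $t=2$ the root $\alpha=1$ is double, the solutions become affine in $k$, and $\nk(k)=1+\sum_{j=2}^k\Delta\nk(j)$ becomes quadratic in $k$ (equivalently $\nk(k)$ satisfies the recurrence with characteristic polynomial $(\lambda-1)(\lambda^2-t\lambda+1)$). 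Substituting the resulting $\nk(k)$ and $m_k$ into the two identities of the first paragraph and simplifying with $\alpha+\alpha^{-1}=t$, $\alpha\alpha^{-1}=1$ and $\sqrt{t^2-4}=\alpha-\alpha^{-1}$ yields the stated formulas; the case $t=2$ (equivalently the confluent limit $\alpha\to 1$) recovers part (1) and the Harary--Harborth formulas of \cite{harary1976extremal}.

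\emph{Main obstacle.} The real work, and the step I expect to be hardest, is Step 1: proving that saturating all perimeter vertices never obstructs the construction --- the animal stays a simply connected disk bounded by a simple cycle --- and that exactly two perimeter-vertex types occur, with the stated local replacement rules. This is the structural fact that is evident by inspection in each of the three Euclidean tessellations; in the general $\{p,q\}$ setting it needs a careful inductive argument inside the fixed tessellation, plus separate hands-on treatment of the degenerate cases $q=3$ (where corner-fill tiles disappear and a single new tile can be glued along two perimeter edges at once) and of the first one or two layers, before the generic local picture has stabilized.
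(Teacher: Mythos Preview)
Your proposal is correct and is essentially the paper's own argument: both classify the perimeter vertices of $A_{p,q}(k)$ into exactly two degree-types (your corner/side vertices are the paper's degree-$2$/degree-$3$ vertices for $p\ge 4$, respectively degree-$3$/degree-$4$ for $p=3$), derive the same $2\times 2$ linear recursion with trace $t=(p-2)(q-2)-2$ and determinant $1$, and solve it (Proposition~\ref{prop:recurrence} and Lemma~\ref{lem:recurrence-vert}). The only difference is packaging --- you invoke Euler's formula and the edge--face incidence identity up front to reduce everything to the pair $(\nk(k),m_k)$, while the paper builds $\vk(k)$, $\ek_2(k)$, etc.\ by summing the vertex-type counts layer by layer --- and your Step~1 obstacle is handled in the paper exactly as you anticipate, via Remarks~\ref{rmk:vdeg} and~\ref{rmk:e-gluing} and the case split $p=3$ versus $p\ge 4$ (with $q=3$ singled out).
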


For $t>2$, we get as a corollary that the parameters of $A_{p,q}(k)$ grow exponentially; as expected from the properties of hyperbolic geometry. The exponential growth rate of these parameters is governed by the constant $\alpha>1$.

\begin{cor}
Let $t=(p-2)(q-2)-2\ge 2$ and $\alpha=\frac{t+\sqrt{t^2-4}}{2}$. Then
\[\lim_{k \to \infty}\vk(k+1)/\vk(k)= \alpha,\]
and the same limit is true for the rest of the graph parameters listed in Theorem \ref{thm:layers}.
\end{cor}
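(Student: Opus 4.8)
The plan is to read the result off directly from the closed forms in Theorem \ref{thm:layers}, so once those formulas are in hand the argument is entirely elementary. First I would record the basic algebraic fact that $\alpha$ and $1/\alpha$ are the two roots of $x^2 - tx + 1 = 0$: their product is $\frac{t^2 - (t^2-4)}{4} = 1$ and their sum is $t$. Consequently $\alpha \ge 1$, with $\alpha = 1$ exactly when $t = 2$. This dichotomy matches the two cases of Theorem \ref{thm:layers}, and I would treat them separately.

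In the hyperbolic case $t > 2$ we have $\alpha > 1$, hence $\alpha^{-k} \to 0$. Every parameter appearing in Theorem \ref{thm:layers} — and likewise every parameter in Equation (\ref{eq:allAparams}) — has the shape $f(k) = a\alpha^k + b\alpha^{-k} + c$ for constants $a,b,c$ depending only on $p,q$. The one point to check is that the leading coefficient $a$ is nonzero; in fact it is strictly positive in each case, e.g. $a = \frac{p}{t-2}$ for $\vk$, $a = \frac{p(\alpha+q-1)}{(\alpha-1)\sqrt{t^2-4}}$ for $\ek$, and $a = \frac{p(q-2)}{(\alpha-1)\sqrt{t^2-4}}$ for $\nk$, all positive since $p \ge 3$, $q \ge 3$, $t > 2$, and $\alpha > 1$. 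Dividing the numerator and denominator of $f(k+1)/f(k)$ by $\alpha^k$ then gives
\[
\frac{f(k+1)}{f(k)} = \frac{a\alpha + b\alpha^{-2k-1} + c\alpha^{-k}}{a + b\alpha^{-2k} + c\alpha^{-k}} \longrightarrow \frac{a\alpha}{a} = \alpha \qquad (k \to \infty).
\]

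In the Euclidean case $t = 2$ we have $\alpha = 1$, and Theorem \ref{thm:layers}(1) together with Equation (\ref{eq:allAparams}) exhibits each parameter as a nonconstant polynomial in $k$ (of degree $1$ or $2$) with positive leading coefficient; the ratio of consecutive values of any such polynomial tends to $1 = \alpha$, which settles this case. I do not expect a genuine obstacle: the only things needing care are verifying nonvanishing (indeed positivity) of the leading coefficient $a$ for each parameter so that no cancellation occurs, and remembering to handle the Euclidean formulas separately since there the term $\alpha^{-k}$ does not decay. Everything else is a one-line limit.
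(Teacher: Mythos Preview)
Your proposal is correct and matches the paper's treatment: the corollary is stated without proof in the paper, as an immediate consequence of the closed formulas in Theorem~\ref{thm:layers} and Equation~(\ref{eq:allAparams}), and your argument simply makes that implication explicit. One minor remark: Equation~(\ref{eq:allAparams}) records only the hyperbolic formulas, so for the Euclidean parameters $\ek_1$, $\ek_2$, $\vk_{int}$ you would need to extract the polynomial expressions from the proof of Theorem~\ref{thm:layers} rather than from that display; this changes nothing in the argument.
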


Some of the recursive formulas for the complete layered sequence $A_{p,q}(k)$ in Theorem \ref{thm:layers} were independently computed and used as a tool for understanding geometric and combinatorial properties of the $\{p,q\}$-tessellations. However, the extremality of these values was not previously examined. %until now

%Some of the recursive formulas of the parameters of the complete layered sequence $A_{p,q}(k)$ that we analyze in Theorem \ref{thm:layers} have already been computed and used as a tool for understanding geometric and combinatorial properties of the $\{p,q\}$-tessellations. Though, it is important to mention that proving that the extremality of the graph parameter of $A_{p,q}(k)$ has remained unexplored. %until now

For instance, in \cite{higuchi2003isoperimetric}, Higuchi and Shirai use the layered $A_{p,q}(k)$ structures to compute the Cheeger constant of the infinite graph of the $\{p,q\}$-tessellations. Specifically, they give recursive computations for $\vk(k)$ and a value equivalent to the number of tiles in a layer, i.e., $\nk(k)-\nk(k-1)$. Using these values they proved that the Chegeer constant of the $\{p,q\}$-tessellation, defined by
  $$ \inf \left\{ \frac{|E(\partial_v W)|}{\text{vol}(W)} : W \text{ is a finite subgraph} \right\},$$
is equal to the limit of the corresponding values for $A_{p,q}(k)$ as $k\to \infty$. Here, $|E(\partial_v W)|$ is the number of edges connecting $W$ with its complement, and vol$(W)=\sum_{x\in W} deg(x)$.

%For instance, in \cite{higuchi2003isoperimetric}, they use the layered $A_{p,q}(k)$ structures to compute the Cheeger constant of the graph of the $\{p,q\}$-tessellations. Specifically, they give a recursive computation of $\vk(k)$ and a value equivalent to the number of tiles in a layer, i.e., $\nk(k)-\nk(k-1)$. With these values they were able to prove that the Chegeer constant of the $\{p,q\}$-tessellation, defined by

The $A_{p,q}(k)$ serve as a discrete analogue of balls of radius $k$ centered at $A_{p,q}(1)$, where we consider two faces to be adjacent if their intersection is non-empty. More commonly, proper combinatorial balls with respect to edge-distance in a graph with vertex set $V$ are defined as $\mathcal{B}_k(x)=\{y\in V : d(x,y)\leq k\}$. Then viewing the $\mathcal{B}_k(x)$ as a structure on the dual graph of an animal, the definitions of $A_{p,q}(k)$ and $\mathcal{B}_k(x)$ coincide when $q=3$, since every face in the $k$-th layer is necessarily glued along an edge to a face of $A_{p,q}(k-1)$. However, when $q\ge 4$ there will always be new faces only glued to the previous layer at a vertex, and $\mathcal{B}_k(x)$ will correspond to a sub-animal of $A_{p,q}(k)$. 

%The $A_{p,q}(k)$ serve as a kind of discrete analogue of ``balls'' of radius $k$ centered at $A_{p,q}(1)$. More generally, proper combinatorial balls with respect to edge-distance in a graph are defined as $B_k(x)=\{y \text{ vertex} : d(x,y)\leq k\}$. Note that the definitions of $A_{p,q}(k)$ and $B_k(x)$ \textit{coincide} only when $q=3$, in which case $B_k(x)$ is the dual graph of $A_{p,q}(k)$ on the $\{p,3\}$-tesellation.

In \cite{keller2008geometric}, Keller and Peyerimhoff compute the perimeter of these combinatorial balls $\mathcal{B}_k(x)$. These values are then used to find the \textit{exponential growth} of the graph, defined as,
$$\mu= \limsup_{k\to \infty } \frac{\log \text{vol} (\mathcal{B}_k(x)) }{k}. $$
 They obtain $\mu= \tau + \sqrt{\tau^2-1}$ for the $\{p,q\}$-tessellation, where $\tau=q-\frac{2}{p-2}$. This value is related to our $t=(p-2)(q-2)-2$ by $\tau=\frac{t}{p-2}+2$. Moreover, Theorem \ref{thm:layers} can be stated in terms of the combinatorial curvature at a vertex of a $\{p,q\}$-tessellation, given by $\kappa= 1-\frac{q}{2}+\frac{q}{p} = \frac{2-t}{2p}$. %\cite{keller2008geometric}.

In \cite{mertens2017percolation}, Mertens and Moore find recursive formulas for the number of vertices and tiles on the sequence $A_{p,q}(k)$ in order to sample a certain type of random hyperbolic $\{p,q\}$-animals that allows them to give computational estimates of hyperbolic percolation thresholds on $\{p,q\}$-tessellations.

Instead of relying on these independent and related works, we give a complete proof of Theorem \ref{thm:layers} to keep the discussion self-contained. In addition, our analysis is aimed at a more detailed description of the perimeter structure of $A_{p,q}(k)$, which we use to compute the graph parameters of all extremal $n$-tile $\{p,q\}$-animals.

%Even though, in the aforementioned references, parts of our analysis of the parameters of $A_{p,q}(k)$ were already done, we will give a complete proof of Theorem \ref{thm:layers} for having a self-contained exposition. In addition, we give a more detailed description of the perimeter structure of the family $A_{p,q}(k)$ that we use to compute the graph parameters of any $n$-tile extremal $\{p,q\}$-animal.

After proving Theorem \ref{thm:layers}, we construct and analyze a sequence of spiral animals with $n$ tiles, of which the complete $k$-layered animals are a subsequence. These $\{p,q\}$-spirals are denoted by $S_{p,q}(n)$ and are constructed in roughly the following way: find the maximum $k$ such that $\nk(k) \leq n$, then attach $n-\nk(k)$ adjacent tiles in one direction along the boundary of $A_{p,q}(k)$, saturating perimeter vertices one a time along the way. The definition of the sequence $S_{p,q}(n)$ is made more precise in Definition \ref{defn:spiral} (see Figure \ref{fig:S37-4-5}). 

Before discussing the spirals in more detail, we pause here for a minute to point out the subtleties in the deluge of notation and bookkeeping we employ to keep track of all the parameters we are interested in. In particular, the same set of graph parameters are discussed in three distinct settings, and we use variations on the notation $v$, $e$, and $n$ for each. First, we have the parameters of $A_{p,q}(k)$, which use the adorned letters presented in Theorem \ref{thm:layers} with the layer value as the input. For the spiral $S_{p,q}(n)$ we use unadorned letters and the tile number as the input, e.g., $\vspir(n) = v(S_{p,q}(n)), \espir(n) = e(S_{p,q}(n))$, etc. For a $\{p,q\}$-animal $A$ or it's underlying planar graph, we use unadorned notation and input the animal's name, e.g., $v(A)$, $e(A)$, etc.

%Here, we pause to note that the same set of graph parameters are discussed in three distinct settings, and we use variations on the notation $v$, $e$, and $n$ for each. First, we have the parameters of $A_{p,q}(k)$, which use the adorned letters presented in Theorem \ref{thm:layers} with the layer value as the input. For the spiral $S_{p,q}(n)$ we use unadorned letters and the tile number as the input, e.g., $\vspir(n) = v(S_{p,q}(n)), \espir(n) = e(S_{p,q}(n))$, etc. For a $\{p,q\}$-animal $A$ or it's underlying planar graph, we use unadorned notation and input the animal's name, e.g., $v(A)$, $e(A)$, etc.

For the spiral $S_{p,q}(n)$, we compute its parameters in terms of the sequence of vertex degrees on the perimeter of the previous layer of the spiral. To accomplish this, it is useful to label the vertices on the perimeter of $A_{p,q}(k)$ by $x_{k,i}$, and to denote by $d_{k,i}$ the degree of $x_{k,i}$. The sequences $d_{k,i}$ are computed recursively at the end of Section 3.

\begin{figure}
    \centering
    \includegraphics[scale=4]{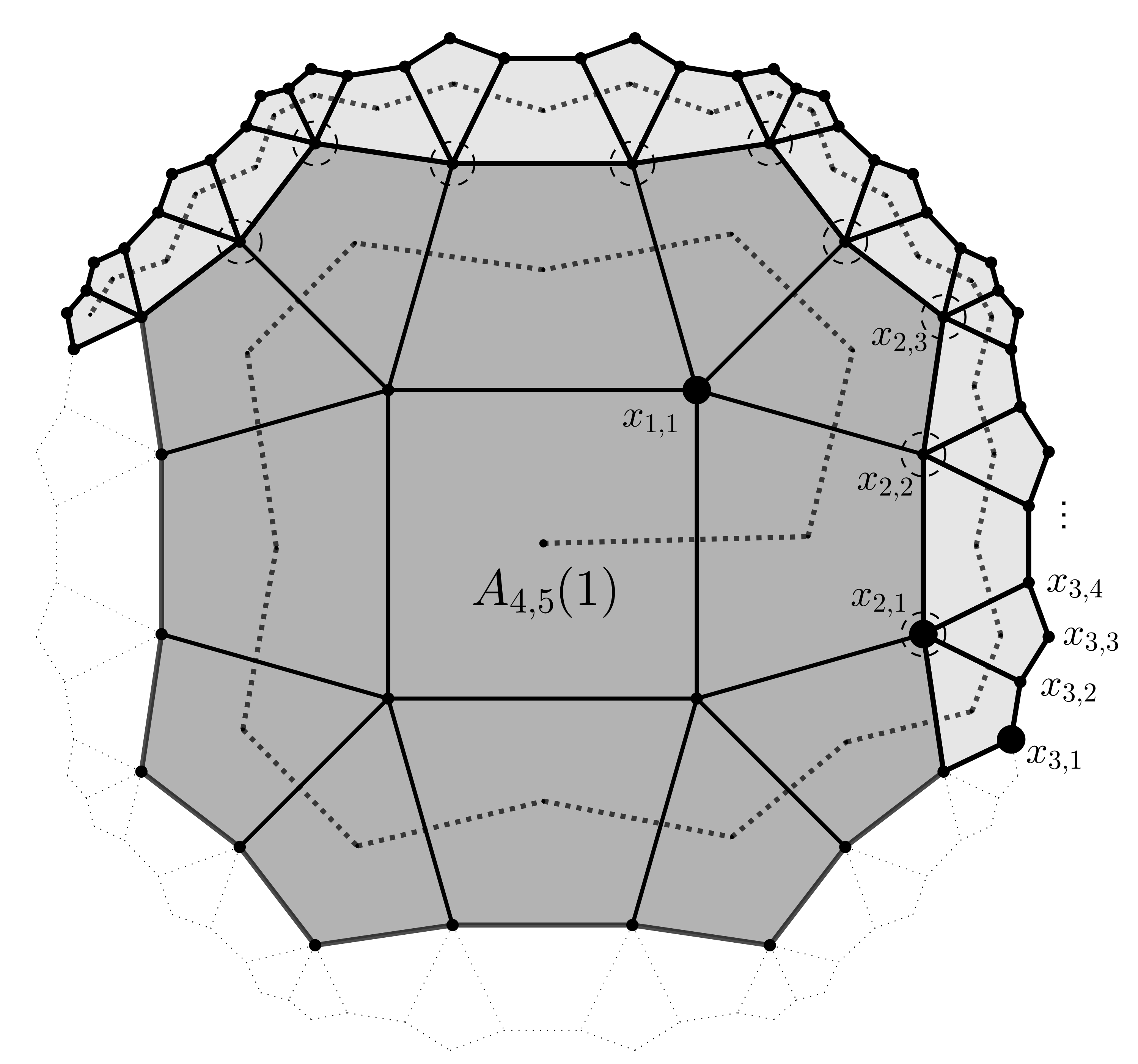}
    \caption{$S_{p,q}(n)$ for $n=37$ and $\{p,q\}=\{4,5\}$. $A_{4,5}(2)$ is shown in dark-gray, the light-gray tiles are those necessary to complete $S_{4,5}(37)$. The growth of the spiral is depicted by the dotted lines. Big-dot points mark the beginning (in the counter-clockwise direction) of the perimeter vertices on each layer; these are denoted by $x_{k,1}$. The number of perimeter vertices of $A_{4,5}(2)$ that are interior vertices of $S_{4,5}(37)$ is $m=9$; these vertices are marked by a dashed circle. See \cite{beautifulanimals} for our interactive applets to explore spiral animals and their graph parameters.}
    \label{fig:S37-4-5}
\end{figure}

 % cf. Definition \ref{defn:degree-sequence} and Lemma \ref{lem:degree-sequence}.

%Equations in Theorem \ref{thm:layers} are only valid for the spiral sequence $\{A_{p,q}(k)\}$. However, there is a constructive way of generating, for each $n$, a \textit{spiral animal} $S_{p,q}(n)$ that will have the same extremal geometric properties as the $A_{p,q}(k)$. Roughly, we do it in the following way: find the maximum $k$ such that $\nk(k) \leq n$, then attach $n-\nk(k)$ tiles along the boundary of $A_{p,q}(k)$ in a spiral way making sure of having $q$ tiles meeting in each vertex. It will be useful to label the perimetral vertices of $A_{p,q}(k)$ as $x_{k,i}$, and by $d_{k,i}=Ord(x_{k,i},A_{p,q}(k))$ the sequence of degree of the perimeter vertices of $A_{p,q}(k)$.

%Note that $A_{p,q}(k)$ is a particular case of spiral for whom we can obtained closed formulas for their parameters. 

\begin{thm}\label{thm:e_1spiral}
Fix $\{p,q\}$ such that $(p-2)(q-2)\ge 4$. Let $S_{p,q}(n)$ be the $\{p,q\}$-spiral with $n$ tiles, and let $k$ be such that $\nk(k)\leq n < \nk(k+1)$. Let $m$ count the number of perimeter vertices of $A_{p,q}(k)$ that are saturated in $S_{p,q}(n)$. Then, 
    \begin{align*}
    \espir(n) &= (p-1)(n-\nk(k))+\ek(k)-m,    \\
    \vspir(n) &= (p-2)(n-\nk(k))+\vk(k)-m.    \\
    \espir_1(n) &= (p-2)(n-\nk(k)) + \ek_1(k)-2m,   
    \end{align*}
Furthermore, $m$ can be computed as follows: If $n-\nk(k)\leq q-d_{k,1}$, then $m=0$. And otherwise $m$ takes the unique value $\ge1$ such that
    \begin{align} \label{eqn:formula-m}
        \sum_{i=1}^m (q-d_{k,i}) \leq n-\nk(k)-1 < \sum_{i=1}^{m+1} (q-d_{k,i}).        
    \end{align}
\end{thm}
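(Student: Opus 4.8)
The plan is to build $S_{p,q}(n)$ from $A_{p,q}(k)$ by adjoining its $N:=n-\nk(k)$ extra tiles one at a time, in the order fixed by Definition~\ref{defn:spiral}, to track how each addition changes the edge count, and then to read off the other two parameters. Writing $T_1,\dots,T_N$ for the added tiles in order, the crucial dichotomy is that, at the step where $T_j$ is glued on, $T_j$ meets the current structure either in a single perimeter edge (a \emph{free} step) or in two perimeter edges sharing an endpoint (a \emph{closing} step). I would first show, using the perimeter description of $A_{p,q}(k)$ from Section~3 --- in particular that the tiles around any perimeter vertex are consecutive, and that $N<\nk(k+1)-\nk(k)$ so the spiral never returns to its starting edge --- that no other gluing pattern occurs, that a closing step happens exactly when $T_j$ completes the set of tiles around one of the perimeter vertices $x_{k,i}$ of $A_{p,q}(k)$ (saturating it), that distinct closing steps saturate distinct $x_{k,i}$, and that no vertex created during the spiral is ever saturated. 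Consequently there are exactly $m$ closing steps and $N-m$ free steps.

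Now for the accounting: a free step adjoins a $p$-gon along one edge, turning one perimeter edge into an interior edge and adding $p-1$ new edges and $p-2$ new vertices; a closing step adjoins a $p$-gon along two adjacent edges, turning two perimeter edges interior and adding $p-2$ new edges and $p-3$ new vertices. Summing the edge contributions over $N-m$ free and $m$ closing steps gives
\[
\espir(n)=\ek(k)+(p-1)(N-m)+(p-2)m=(p-1)\bigl(n-\nk(k)\bigr)+\ek(k)-m,
\]
the first identity. The other two I would then deduce without repeating the case analysis: both $S_{p,q}(n)$ and $A_{p,q}(k)$ are simply connected (the spiral never closes a loop before becoming $A_{p,q}(k+1)$), so Euler's relation $v-e+n=1$ holds for each; combining $\vspir(n)=\espir(n)-n+1$ and $\vk(k)=\ek(k)-\nk(k)+1$ with the edge identity yields $\vspir(n)=(p-2)(n-\nk(k))+\vk(k)-m$, while the double count $p\cdot(\text{tiles})=2\cdot(\text{interior edges})+(\text{perimeter edges})$ gives $\espir_1(n)=2\espir(n)-pn$ and $\ek_1(k)=2\ek(k)-p\nk(k)$, and these combine with the edge identity to give $\espir_1(n)=(p-2)(n-\nk(k))+\ek_1(k)-2m$.

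It remains to compute $m$. Here I would use that the spiral fills the ``fans'' of tiles around $x_{k,1},x_{k,2},\dots$ in this order, where the fan at $x_{k,i}$ consists of $q-d_{k,i}+1$ tiles and $x_{k,i}$ is saturated precisely when its fan is completed, but the final tile of the fan at $x_{k,i}$ coincides with the initial tile of the fan at $x_{k,i+1}$. Hence the number of tiles needed to reach and include the saturation of $x_{k,j}$ is
\[
s_j:=1+\sum_{i=1}^{j}(q-d_{k,i}),
\]
the leading $1$ accounting for the tile glued to the starting perimeter edge at $x_{k,1}$ (equivalently, for the one tile each later fan shares with its predecessor). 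Therefore $m$ is the largest $j$ with $s_j\le N$, i.e.\ the largest $j$ with $\sum_{i=1}^{j}(q-d_{k,i})\le n-\nk(k)-1$; since $q-d_{k,i}\ge 1$ for every perimeter vertex, the partial sums strictly increase, so this $m$ is unique and is exactly the index characterized by (\ref{eqn:formula-m}), while $m=0$ precisely when even $s_1=q-d_{k,1}+1>N$, that is, when $n-\nk(k)\le q-d_{k,1}$.

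The main obstacle is the structural claim in the first paragraph: proving that every spiral step is free or closing with no other possibility, that each closing step saturates exactly one old perimeter vertex, and that freshly created vertices and edges stay on the perimeter --- in short, that the spiral stays one layer thick. This is exactly where the recursive control of the degree sequences $d_{k,i}$ and the bound $N<\nk(k+1)-\nk(k)$ from Section~3 do the work; granting that picture, the parameter formulas and the expression for $m$ are immediate bookkeeping.
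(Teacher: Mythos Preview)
Your approach is essentially the paper's: add the $N=n-\nk(k)$ tiles one by one, classify each step by how many perimeter edges the new tile is glued along, relate that number to how many perimeter vertices of $A_{p,q}(k)$ get saturated, and sum. The paper phrases this with the gluing parameter $\epsilon$ (Remark~\ref{rmk:e_2bounds}): an $\epsilon$-glued tile saturates $\epsilon-1$ old perimeter vertices and changes $e_1$ by $p-2\epsilon=(p-2)-2(\text{increment of }m)$, so $e_1(n)=(p-2)N+\ek_1(k)-2m$, and ``similarly'' for the others. Your derivation of $v(n)$ and $e_1(n)$ from $e(n)$ via Euler and the incidence identity $pn=2e_2+e_1$ is a pleasant shortcut over computing each separately; the paper just asserts the parallel formulas. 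Your computation of $m$ via the fan sizes $q-d_{k,i}+1$ with the one-tile overlaps is exactly the paper's.

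There is one genuine oversight. Your free/closing dichotomy ($\epsilon\in\{1,2\}$) is correct only for $q\ge4$. When $q=3$ (so $p\ge6$), each spiral tile in the range $\nk(k)<n<\nk(k+1)$ is $2$-glued or $3$-glued to $S_{p,q}(n-1)$: by Remark~\ref{rmk:e-gluing} the tile is $1$- or $2$-glued to $A_{p,q}(k)$, and it shares one further edge with its predecessor $T_{n-1}$. In particular a $3$-glued tile saturates \emph{two} perimeter vertices of $A_{p,q}(k)$ at once, so ``$N-m$ free steps and $m$ closing steps'' is not literally a count (indeed $N-m$ can be negative). The remedy is exactly the paper's: work with general $\epsilon$ and note that each step contributes $p-\epsilon$ new edges while incrementing $m$ by $\epsilon-1$; summing gives $e(n)-\ek(k)=\sum(p-\epsilon)=(p-1)N-\sum(\epsilon-1)=(p-1)N-m$, which is your formula. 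So your algebra survives unchanged, but the structural claim you flag as ``the main obstacle'' needs to be stated as ``$\epsilon\in\{1,2\}$ for $q\ge4$ and $\epsilon\in\{2,3\}$ for $q=3$'' rather than as a universal free/closing dichotomy.
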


% Note that the tiles which complete these cycles around vertices add two fewer edges to the perimeter than the other tiles. Furthermore, if $n-\nk(k)<q-d_{k,1}$, then $m=0$. And otherwise $m$ takes the value $\ge1$ such that
 %           $$ \sum_{i=1}^m (q-d_{k,i}) \leq n-\nk(k)-1 < \sum_{i=1}^{m+1} (q-d_{k,i}).$$

%This formula gives an algorithm for computing $e_1(S_{p,q}(n))$, and thus $\mathcal{P}_{p,q}(n)$.

%We will prove that the perimeter of the $\{p,q\}$-spiral $S_{p,q}(n)$ is minimal over all $\{p,q\}$-animals with $n$ tiles, i.e., it has perimeter equal to $\mathcal{P}_{p,q}(n)$. Equations in Theorem \ref{thm:layers} then provide a formula for $P_{min}(\nk(k))$, but are only valid for the subsequence $\{\nk(k)\}_{k=1}^{\infty}$. 

%We notice that following the spiral construction of Definition ?? , for a fixed pair $\{p,q\}$ and for each $n$ there exists a unique $k$ and $k+1$ such that there is an animal with $n$ tiles that contains $A_{p,q}(k)$ and that it is contained in $A_{k+1}$. We refer to any animal that is obtained following this algorithm as a $\{p,q\}$-spiral. 

%counting problem and cite the paper where they give enumeration for spiral {4,4} and 

The next theorem states that the spirals in fact attain extremal values in the graph parameters listed above. It is natural to imagine that this should be the case, as the spiral configuration intuitively has the effect of wrapping up as many vertices and edges in the interior as possible, where they can be shared by the most tiles. Thus, interior parameters are maximized while perimeter and overall counts are minimized. 

 % More precisely, by the previous theorem, $\mathcal{P}_{p,q}(n)$ is equal to the perimeter of the $\{p,q\}$-spiral with $n$ tiles, the latter can be computed as follows.

\begin{thm}\label{thm:extremalspiral}
Fix $\{p,q\}$ such that $(p-2)(q-2)\ge 4$, and let $A$ be a $\{p,q\}$-animal with $n$ tiles. Then $e_2(A)$ and $v_{int}(A)$ are maximized, and $e_1(A)$, $e(A)$, and $v(A)$ are minimized when $A=S_{p,q}(n)$.
\end{thm}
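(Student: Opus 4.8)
The plan is to establish the extremality of the spiral by an isoperimetric-type argument, reducing all five parameters to a single inequality. First I would observe that for any $\{p,q\}$-animal $A$ with $n$ tiles, Euler's formula on the underlying planar graph gives a linear relation among $v(A)$, $e(A)$, $n$, and the number of holes (bounded faces that are not tiles); for a simply-connected animal this pins down $v(A) - e(A) = 1 - n$, and in general holes only increase both $v$ and $e$ together, so it suffices to treat the simply-connected case and then note holes cannot help. Next, the identities $e(A) = e_1(A) + e_2(A)$ and the handshake-type relation counting incidences of tiles with edges, namely $pn = 2e_2(A) + e_1(A)$, let me express everything in terms of the single quantity $e_2(A)$ (the number of interior/shared edges): maximizing $e_2(A)$ is equivalent to minimizing $e_1(A)$, to minimizing $e(A)$, and to minimizing $v(A)$, and a similar bookkeeping ties $v_{int}(A)$ to $e_2(A)$ via the perimeter cycle. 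So the whole theorem collapses to: \emph{among $n$-tile $\{p,q\}$-animals, $e_2$ is maximized by $S_{p,q}(n)$}, equivalently the perimeter $e_1$ is minimized. Theorem~\ref{thm:e_1spiral} already gives the exact value $e_1(S_{p,q}(n)) = (p-2)(n-\nk(k)) + \ek_1(k) - 2m$, so the goal is to show no animal beats this.

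The core of the argument is a lower bound on the perimeter $e_1(A)$ in terms of $n$. I would prove this by induction on $n$, peeling off tiles one at a time. The key local fact is: in any $\{p,q\}$-animal with at least two tiles, there exists a tile $T$ whose removal keeps the animal connected (a ``free'' tile on the boundary — one can take, e.g., the last tile in a spanning-tree ordering of the dual graph, or a tile realizing an extreme point of the animal), and moreover one can choose $T$ so that it is attached to the rest along a controlled number of edges. Removing such a $T$ changes $e_1$ by $p - 2s + r$ where $s$ is the number of shared edges of $T$ and $r$ accounts for perimeter vertices that become... this bookkeeping is exactly the per-tile analysis the authors say they carry out for the spiral in Section~3. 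The heart of the matter is then a \emph{discrete isoperimetric inequality}: if $A$ has $n$ tiles then
\begin{equation*}
  e_1(A) \;\ge\; e_1(S_{p,q}(n)),
\end{equation*}
and the clean way to get this is to compare $A$ against the layered benchmark — show that an animal with $e_1(A)$ perimeter edges can contain at most $\nk(k) + (\text{spiral overflow})$ tiles, which is precisely the content of ``the spiral wraps tiles as tightly as possible.'' Concretely: given a target perimeter value, the maximum number of tiles is achieved by filling complete layers and then spiraling, because each complete layer is the unique way to saturate \emph{all} current perimeter vertices simultaneously, and within a partial layer saturating vertices in consecutive order (as the spiral does) delays the growth of the perimeter maximally.

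The main obstacle I expect is precisely this last combinatorial optimization — proving rigorously that the layered/spiral configuration is the \emph{unique} maximizer of tiles for a given perimeter (equivalently the minimizer of perimeter for given $n$), rather than just \emph{a} maximizer. This requires a careful exchange/compression argument: starting from an arbitrary extremal $A$, show that it must saturate perimeter vertices greedily, that its ``innermost'' structure must be a complete layered animal $A_{p,q}(k)$, and that the leftover tiles must form a connected arc attached in spiral fashion. Handling the case $q \ge 4$ is the genuinely delicate part, since then tiles in a layer can be attached to the previous layer at only a vertex (not an edge), so the dual graph of a layer is not a path and there is real freedom in how a partial layer can be built; one must show that any deviation from the spiral ordering of this partial layer either disconnects the animal or strictly increases the perimeter. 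I would set this up as a monovariant argument on the sorted degree sequence of perimeter vertices, using the recursive formulas for $d_{k,i}$ from the end of Section~3 to control how each added tile redistributes degrees, and concluding that the lexicographically-optimal way to spend perimeter budget is exactly the spiral rule encoded in Equation~(\ref{eqn:formula-m}).
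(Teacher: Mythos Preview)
Your reduction of all five statements to the single inequality $e_2(A)\le e_2(S_{p,q}(n))$ is correct and is exactly where the paper begins. But the peeling/compression strategy you outline has a real gap. Removing a leaf tile $T$ of the dual from an arbitrary $A$ changes $e_1$ by $p-2\epsilon$ where $\epsilon=\deg_{A\setminus T}(T)$, and induction gives $e_1(A\setminus T)\ge e_1(S_{p,q}(n-1))$; to conclude $e_1(A)\ge e_1(S_{p,q}(n))$ you would need $\epsilon$ for your chosen tile to be at most the gluing parameter of the $n$-th spiral tile. There is no reason the leaf you pick satisfies this, and the ``monovariant on the sorted perimeter degree sequence'' you gesture at is precisely the hard combinatorial content of the theorem, not a routine bookkeeping step. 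In the hyperbolic case the perimeter is a positive fraction of $n$, so there is no slack to absorb a badly-chosen tile, and I do not see how to make the exchange argument go through without essentially re-proving the theorem inside it.

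The paper avoids this entirely by a duality trick you have not considered: the dual of a $\{p,q\}$-animal is a $\{q,p\}$-graph whose face count is $v_{int}(A)$, and Equation~(\ref{eqn:interior-edges}) expresses $e_2(A)$ in terms of $e_2$ of the dual. Combined with the bound $v_{int}\le n-1$ (Lemma~\ref{lem:v_intbound}), this gives an induction that bounces between $\{p,q\}$ and $\{q,p\}$ with strictly decreasing face count, so the base cases $n\le q$ (where the dual is a forest) suffice. This is the missing idea. Your treatment of holes is also too quick: ``holes only increase $v$ and $e$ together'' does not by itself bound $e_2$, and the paper needs a separate lemma ($e_2(n)+e(l)\ge e_2(n+l)$, Lemma~\ref{lem:add-l}) plus induction on the number of holes to close that case.
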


\textit{Spiral-like} animals have also been used by Buchholz and de Launey \cite{buchholz2009edge} for exploring the extremal combinatorial problem of edge-minimization for families of $\{p,q\}$-animals that do not tessellate the Euclidean plane. They use spiral-like arrangements to obtain asymptotic results for extremal values on the number of internal edges. In that setting, it remains open as to whether these spiral-like arrangements are actually extremal, that is, if they attain and provide the exact minimal optimal values for the number of internal edges.

Theorem \ref{thm:extremalspiral} shows that the minimum perimeter attainable by a $\{p,q\}$-animal with $n$ tiles, which we denote by $\mathcal{P}_{p,q}(n)$, is equal to $e_1(n)$. Plugging the Euclidean $\{p,q\}$ values into Theorem \ref{thm:e_1spiral}, we recover Harary and Harborth's formulas for extremal values of animals on regular tessellations of the Euclidean plane \cite{harary1976extremal}.

\begin{cor}  \label{cor:Harary}  %%Check writing, starting at which n?
For $\{p,q\}=\{3,6\}$, $\{4,4\}$, and $\{6,3\}$, we have
    \begin{align*}
        \mathcal{P}_{3,6}(n) &= 2 \left\lceil \frac12( n+\sqrt{6n}) \right\rceil - n,  \\    % p,q = 3,6 
        \mathcal{P}_{4,4}(n) &= 2 \left\lceil 2\sqrt{n} \right\rceil,                  \\ %  p,q=4,4
        \mathcal{P}_{6,3}(n) &=  2 \left\lceil \sqrt{12n-3} \right\rceil.  %  p,q=6,3
    \end{align*}
\end{cor}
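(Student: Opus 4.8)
The plan is to derive each of the three formulas directly from Theorem~\ref{thm:e_1spiral} by specializing to the Euclidean Schläfli pairs and simplifying, since Theorem~\ref{thm:extremalspiral} already tells us that $\mathcal{P}_{p,q}(n) = e_1(n) = e_1(S_{p,q}(n))$. So the entire content of the corollary is a computation: take the formula
\[
  \espir_1(n) = (p-2)(n-\nk(k)) + \ek_1(k) - 2m,
\]
plug in the relevant $\{p,q\}$, and massage the result into a closed form with no reference to the auxiliary quantities $k$ and $m$. The first step is to record, for each Euclidean case, the explicit expressions for $\nk(k)$, $\ek_1(k)$ (the latter from Equation~(\ref{eq:allAparams}) referenced in Theorem~\ref{thm:layers}, specialized to $t=2$), and the perimeter degree sequences $d_{k,i}$, which in the Euclidean setting are eventually periodic and easy to write down explicitly.

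Next I would eliminate $k$ and $m$. The key observation is that for a fixed $n$, the value $k$ is the largest layer index with $\nk(k)\le n$, i.e.\ $k = \lfloor f(n)\rfloor$ for an explicit $f$ obtained by inverting the quadratic $\nk(k)$; and the ``spiral overflow'' $n - \nk(k)$ together with the degree pattern determines $m$ via the inequality~(\ref{eqn:formula-m}). In the Euclidean cases the degree sequence on the perimeter of $A_{p,q}(k)$ consists of a predictable block of degree-$2$ (corner) vertices among degree-$3$ or degree-$4$ vertices, so $\sum_{i=1}^m (q - d_{k,i})$ is a piecewise-linear, essentially arithmetic function of $m$, and~(\ref{eqn:formula-m}) can be solved for $m$ in closed form. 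Substituting back, the dependence on $k$ collapses into a single ceiling or floor of a square-root expression in $n$ — for instance, in the $\{4,4\}$ case one checks that $e_1(n) = 2\lceil 2\sqrt{n}\,\rceil$ by comparing with the layered values $\ek_1(k) = 8k$ at $n = \nk(k) = (2k-1)^2 + \text{(lower order)}$ and tracking the linear growth of the perimeter as tiles are added one at a time around the boundary.

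A cleaner route for the write-up is to verify the claimed formulas rather than rederive them from scratch: show that the proposed right-hand sides $\mathcal{P}(n)$ satisfy $\mathcal{P}(\nk(k)) = \ek_1(k)$ for all $k$, and that $\mathcal{P}(n+1) - \mathcal{P}(n)$ equals the increment $\espir_1(n+1) - \espir_1(n)$ predicted by Theorem~\ref{thm:e_1spiral} — namely $p-2$ when the newly added tile does not saturate a perimeter vertex, and $p-2-2$ when it does. Since adding a tile in the spiral saturates a vertex precisely when the running count hits the next threshold in~(\ref{eqn:formula-m}), one matches these ``drop'' positions against the jumps of the ceiling function in each claimed formula. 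This reduces the corollary to an elementary but slightly fiddly check that the ceiling of a square root increments at exactly the right values of $n$.

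The main obstacle is purely bookkeeping: correctly handling the ceiling/floor boundary cases, especially the transition tiles between layers where the degree sequence $d_{k,i}$ wraps around from $x_{k, \cdot}$ back to $x_{k,1}$, and making sure the off-by-one in~(\ref{eqn:formula-m}) (the ``$-1$'' in $n - \nk(k) - 1$) is threaded through consistently so that the formulas are exact at every $n$ and not merely asymptotically correct. I would organize the proof as three short parallel computations, one per Euclidean pair, each consisting of (i) writing the specialized layered parameters, (ii) solving for $m$, and (iii) collapsing $k$ into a square-root expression, and I expect step (iii) together with the boundary-case verification to be where essentially all the care is needed.
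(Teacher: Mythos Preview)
Your proposal is correct and mirrors the paper's own argument: the paper likewise specializes Theorem~\ref{thm:e_1spiral} using the explicit $\nk(k)$ and the degree sequences $d_k$ from Example~\ref{ex:degree-sequence}, writes $\espir_1(n)$ as a piecewise formula in $n-\nk(k)$ (worked out in full only for $\{3,6\}$), and then checks this agrees with the claimed ceiling expression, declaring the other two cases analogous. Your ``cleaner route'' of verifying increments against the jumps of the ceiling is exactly the kind of check the paper leaves as a ``straightforward computation.''
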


While the Euclidean cases immediately yield closed formulas, in the hyperbolic case it remains open to find a closed formula for $\mathcal{P}_{p,q}$ in terms of $n$. %However, in Corollaries \ref{cor:increasing?, cor:asymptotics} we derive properties of the growth of the sequence $\mathcal{P}_{p,q}(n)$ for hyperbolic $\{p,q\}$-animals.
%However, it is possible to derive for which values of $\{p,q\}$...

%Constants C_{p,q} obtained when k \to \infty. Check!!
%For $p=3$ and $q>6$ 
 %   $$\mathcal{P}_{p,q}(n) < \left( \frac{\alpha-1}{q-3-\beta}\right) n$$
%For $p\geq 4$ and $q\geq 4$
%    $$\mathcal{P}_{p,q}(n) < \left( \frac{\alpha-\beta}{q-2}\right) n$$

%Finally we apply the previous results to the problem of maximizing the number of holes in any $\{p,q\}$-animal. We obtain an upper bound that is...

%\begin{thm}
%Theorem of max num holes using \ref{cor:pminwithnandk{p,q}}
%\end{thm}

%WE WILL ADD HERE A SENTENCE BRIEFLY DESCRIBING WHAT WE DO IN EACH SECTION
Finally, we address the problem of enumerating extremal $n$-tile animals for a fixed $n$, exhibiting several sequences which give unique extremal animals up to isometries. The question of counting extremal animals was first posed by Harary and Harborth in \cite{harary1976extremal}. In the $\{4,4\}$ case \cite{kurz2008counting}, Kurz proved that squares and pronic rectangles, i.e. polyominoes with $l^2$ tiles and rectangles with $l(l+1)$ tiles for $l\geq 1$, respectively, are unique extremal animals up to isometries. Here, we find sequences of $\{p,q\}$-animals which generalize this result for all $\{p,q\}$ pairs in Theorems \ref{thm:uniq_layered} and \ref{thm:pronic}.

%In \cite{harary1976extremal}, Harary and Harborth proposed the problem of enumerating extremal animals with $n$ tiles. In the $\{4,4\}$ case \cite{kurz2008counting}, Kurz proved that squares and pronic rectangles, i.e. polyominoes with $l^2$ tiles and rectangles with $l(l-1)$ tiles for $l\geq 1$ respectively, are unique extremal animals, up to rotations. We generalize this result to all $\{p,q\}$ pairs in Theorems \ref{thm:uniq_layered} and \ref{thm:pronic}.

The rest of the paper is structured as follows: In Section 2, we set up notation and discuss graph theoretic formulae we will need later on. In Section 3, we analyze the layered animals $A_{p,q}(k)$, we deduce formulas for their parameters to prove Theorem \ref{thm:layers}, and we write the substitution rules necessary to construct their perimeter sequence of vertex degrees $d_{k,i}$. We conclude Section 3 with detailed examples that highlight the difference between the Euclidean and hyperbolic cases. Then, in Section 4, we prove Theorem \ref{thm:e_1spiral} by finding algebraic expressions for the parameters of $S_{p,q}(n)$ in terms of the sequences $d_{k,i}$. In Section 5, we provide the proof of Theorem \ref{thm:extremalspiral} which states that spirals are indeed extremal. In Section 6, we prove Theorems \ref{thm:uniq_layered} and \ref{thm:pronic} regarding unique extremal animals. And finally, in Section 7, we state open problems and give concluding remarks. 

\section{Definitions and Preliminary Results}
%%%%%%%%%%%%%%%%%%%%%%%%%%%%%
%In the following, we set notation and recall useful formulas from graph theory. For formal reasons we consider a class of graphs larger than those obtained from animals with no holes. We want the class to be closed under the dual operation.  

%Notice that a $\{p,q\}$-animal defines a planar graph. 

%For the moment we restrict ourselves to animals with no holes, where a hole is defined to be a finite connected component of the complement. Then, the proof of Theorem \ref{thm:extremalspiral} will rely on proving that animals with holes cannot be extremal.  

To begin, we restrict ourselves to animals with no holes, where a hole is defined to be a finite connected component of the complement. We then extend results on extremality to animals with holes as the last piece in proving Theorem \ref{thm:extremalspiral}. 

Consider a $\{p,q\}$-animal $A$ with no holes. The edges and vertices of the $p$-gons of $A$ define a planar graph in which all vertices have degree at least 2 and at most $q$, and its bounded faces are all regular $p$-gons. Here, and throughout this paper we use the term faces only to refer to the \emph{bounded} faces of a planar graph. So the number of tiles of $A$ is equal to the number of faces of its underlying planar graph, which we also refer to simply as $A$, conflating the notation for convenience.

There is also a dual graph $G'$ associated with any planar graph $G$, constructed by placing a vertex at each face and connecting two vertices if and only if their corresponding faces share an edge. For an animal $A$, the dual graph $A'$ may not be an animal, as some edges may not be contained in any faces. Thus we define a larger class of planar graphs to work in, which contains all graphs defined by animals with no holes.

\begin{defn}
We call $G$ a $\{p,q\}$-graph if $G$ is a planar graph in which each vertex of $G$ has degree at most $q$, and each face of $G$ has exactly $p$ edges. 
\end{defn}

Then a $\{p,q\}$-animal with no holes is necessarily a $\{p,q\}$-graph. However, not every $\{p,q\}$-graph is the graph of an animal or even a subgraph of the regular $\{p,q\}$-tessellation. For instance, we can build a  rooted $\{p,q\}$-tree starting with a single vertex, the root $R$, which has $q$ adjacent vertices. Then we give each of $R$'s neighbors $q-1$ distinct additional neighbors, and then each of those new vertices get $q-1$ distinct additional neighbors, and so on. After more than $p/2$ iterations of this, such a graph can no longer live in the $\{p,q\}$-tessellation, as these strings starting from $R$ would necessarily form $p$-gons.
 
For the rest of this paper, we always assume that $(p-2)(q-2)\ge4$ for the pair $\{p,q\}$. We denote the total number of vertices, edges and faces of a fixed graph $G$ in the same manner as for a $\{p,q\}$-animal, using $v(G)$, $e(G)$ and $n(G)$, respectively. Furthermore, we partition the vertices and edges by the number of incident edges and faces, respectively:
\begin{align*}
    v_i(G) &= \# \text{ vertices of degree } i, \text{ for } 1\le i \le q-1,\\
    v_q(G) &= \# \text{ vertices of degree $q$ incident to only $q-1$ faces,}\\
    v_{int}(G) &= \# \text{ interior vertices, i.e., vertices of degree $q$ incident to $q$ faces,}\\
    e_i(G) &= \# \text{ edges incident to $i$ faces, for } i = 0,1,2.
\end{align*}

All non-interior vertices are called \emph{perimeter} vertices, and these are partitioned by the $v_i$ according to their degrees. 

When a fixed $G$ is understood, we write simply $v$, $e$, $n$, etc, and for its dual graph $G'$ we write $v'$, $e'$, $n'$, etc. It is easily seen that when $G$ is a $\{p,q\}$-graph, $p$ and $q$ switch roles for $G'$, which is a $\{q,p\}$-graph. In general, $v'$, $e'$ and $n'$ can be counted directly by various parameters of $G$. We know by definition that $v' = n$; the edges of $G'$ correspond to edges of $G$ which are incident to two faces of $G$, and hence $e' = e_2$; and a face of $G'$ corresponds to an interior vertex of $G$, so $n' = v_{int}$.  We further partition the edges of the dual graph and count its connected components:

\begin{align*}
    e_{2,i}(G) &= \# \text{ edges in $G'$ incident to $i$ faces, for } i = 0,1,2,\\[4pt]
    c'(G) &= \# \text{ connected components of $G'$.}\\
\end{align*}

\begin{figure}
    \centering
    \includegraphics[scale=3]{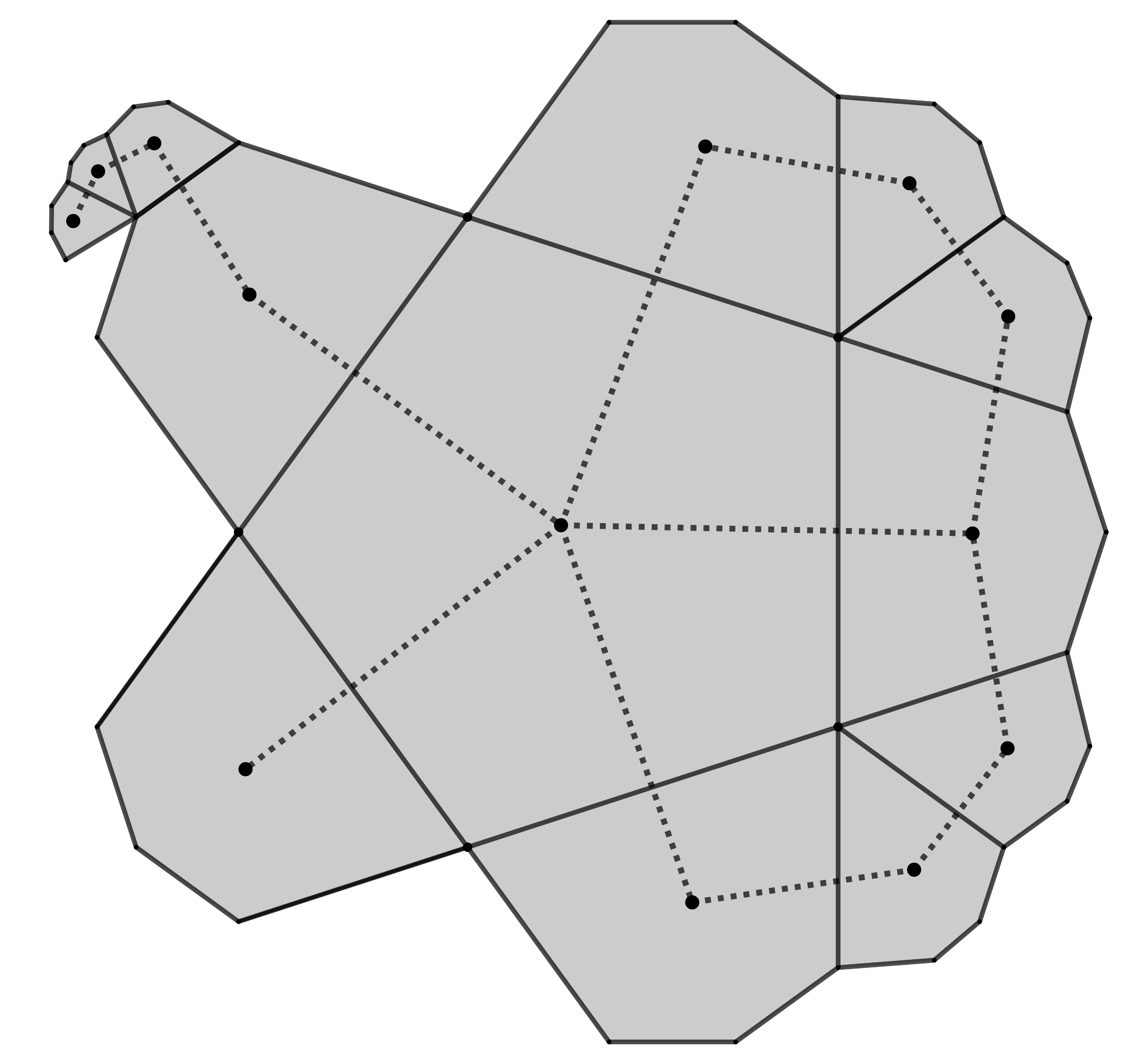}
    \caption{A $\{5,5\}$-animal and its dual. Its parameters are $n=13$, $e=50$, $e_1=36$, $e_2=14$, $v=38$, $v_{int}=2$, $e_{2,0}=5$, $e_{2,1}=8$ and $e_{2,2}=2$. Any resemblance to a real animal is pure coincidence; no animals were harmed for this research.}
\end{figure}

When $G$ is an animal, $c'$ is always 1 by definition. But here we will obtain more general bounds for all $\{p,q\}$-graphs. Using Euler's formula for $G'$, and also counting its edges with respect to incident faces, we have that: 

%For the particular case when $G$ is an animal, giving $c'=1$ by definition, we will obtain more general bounds for all $\{p,q\}$-graphs. Using Euler's formula for $G'$, and also counting its edges with respect to incident faces, we have that:    
    \begin{align*}
        n-e_2+v_{int}&=c',   \\
        % \label{eqn:euler'}\\
        q\cdot v_{int} &= e_{2,1}+2e_{2,2} =e_2 -e_{2,0}+e_{2,2}.  %\label{eqn:edges'}
    \end{align*}
Combining these equations to eliminate $v_{int}$ yields
    \begin{align}
        e_2 = \frac{q(n-c') -e_{2,0} +e_{2,2}}{q-1}. \label{eqn:interior-edges}
    \end{align}
%This formula will be important to prove that $e_2(G)\leq e_2(S_{p,q}(n))$.
This formula will allow us to use induction to prove that $e_2(G)\leq e_2(S_{p,q}(n))$ for any $\{p,q\}$-graph with exactly $n$ faces by translating the inequality to the dual graphs.

We will also need the following lemma which asserts that the number of tiles decreases when we perform the dual operation. Observe that if a $\{p,q\}$-graph is a \emph{tree}, that is a connected graph with no faces, then $v_{int}=0$ and the dual graph of that component is the empty graph. Thus, it is sufficient to only consider components with faces. 

\begin{lem} \label{lem:v_intbound}
Let $G$ be a $\{p,q\}$-graph with $q\geq 4$ such that every connected component has at least one face. Then $v_{int}\leq n-1$. Moreover, if $q=3$ then $v_{int} \leq 2n-2$, and if $q\geq 6$ then $v_{int}< \frac{n}{2}$.
\end{lem}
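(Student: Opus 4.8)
The plan is to work entirely in the dual graph $G'$, where the interior vertices of $G$ become faces. Since we may treat each connected component separately (and since $v_{int}$ and $n$ are additive over components), it suffices to prove the bound for a single connected component with at least one face; if the per-component bound is $v_{int}(G'_j) \le n(G'_j) - 1$ and there are $c'$ components, summing gives $v_{int} \le n - c' \le n - 1$. So assume $G$ is connected with $n \ge 1$ faces. Recall from the identities derived just above the lemma that $n - e_2 + v_{int} = c' = 1$, i.e. $v_{int} = e_2 - n + 1$, and that $q \cdot v_{int} = e_{2,1} + 2e_{2,2} \le 2e_2$. The strategy is to combine these two relations. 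From $v_{int} = e_2 - n + 1$ we get $e_2 = v_{int} + n - 1$, and substituting into $q\, v_{int} \le 2 e_2 = 2 v_{int} + 2n - 2$ yields $(q-2) v_{int} \le 2n - 2$. This immediately gives all three claims at once: for $q = 3$, $v_{int} \le 2n - 2$; for $q = 4$, $v_{int} \le n - 1$; and for $q \ge 6$, $v_{int} \le \frac{2n-2}{q-2} \le \frac{2n-2}{4} = \frac{n-1}{2} < \frac{n}{2}$. (For $q = 5$ one gets $v_{int} \le \frac{2n-2}{3}$, which in particular is $\le n-1$, so the stated "$q \ge 4$" bound $v_{int} \le n - 1$ holds for all $q \ge 4$.)

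The one point that needs a little care is the inequality $q \cdot v_{int} = e_{2,1} + 2 e_{2,2} \le 2 e_2$: this uses $e_{2,1} + 2e_{2,2} \le 2(e_{2,0} + e_{2,1} + e_{2,2}) = 2e_2$, which is clear since $e_{2,0}, e_{2,1} \ge 0$. Actually, to get the cleanest bound it is better to use the exact formula (\ref{eqn:interior-edges}) with $c' = 1$: $e_2 = \frac{q(n-1) - e_{2,0} + e_{2,2}}{q-1}$, combined with $q\, v_{int} = e_2 - e_{2,0} + e_{2,2}$ and $v_{int} = e_2 - n + 1$. Eliminating, $v_{int} = e_2 - n + 1$ and $q(e_2 - n + 1) = e_2 - e_{2,0} + e_{2,2}$, so $(q-1)e_2 = q(n-1) + e_2 - e_{2,0} + e_{2,2} - e_2 \cdot 0$... — in any case these are the same linear relations, and the crude bound $e_{2,1} + 2e_{2,2} \le 2e_2$ already suffices. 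I would present the argument via this crude bound to keep it short.

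The main (minor) obstacle is simply making sure the component-wise reduction is stated correctly: one must observe that a component which is a tree contributes $0$ to $v_{int}$ and has its dual equal to the empty graph, so the hypothesis "every component has at least one face" is exactly what lets us apply the relation $n_j - e_{2}^{(j)} + v_{int}^{(j)} = 1$ on each component $G_j$ (Euler's formula for the connected dual $G_j'$, which has at least one vertex). After that the proof is a two-line linear manipulation. I do not anticipate any real difficulty; the lemma is essentially a repackaging of Euler's formula for the dual together with the degree count $q \cdot v_{int} = e_{2,1} + 2e_{2,2}$.
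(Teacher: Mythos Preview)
Your argument is correct and reaches the same key inequality $(q-2)v_{int}\le 2n-2$ as the paper, but by a genuinely different route. The paper works in $G$ itself: it combines the Handshake Lemma $q\,v_{int}+\sum_i i\,v_i=2e$ with Euler's formula $v-e+n=c$, obtaining $(q-2)v_{int}+\sum_i(i-2)v_i=2n-2c$, and then must prune all degree-$1$ vertices so that the sum $\sum_i(i-2)v_i$ becomes nonnegative. You instead work entirely in the dual $G'$, combining Euler's formula $n-e_2+v_{int}=c'$ with the face-degree count $q\,v_{int}=e_{2,1}+2e_{2,2}\le 2e_2$. Your route is a bit slicker: no pruning step is needed, and the two displayed identities are already set up in the paper just before the lemma, so you are simply reusing them.

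One small correction: reducing to a connected component of $G$ does \emph{not} force $c'=1$; a connected $G$ can have disconnected dual (e.g.\ two $p$-gons joined by a path). Fortunately your argument only needs $c'\ge 1$, which holds as soon as the component has a face: from $e_2=v_{int}+n-c'$ and $q\,v_{int}\le 2e_2$ you get $(q-2)v_{int}\le 2n-2c'\le 2n-2$. So just replace ``$c'=1$'' by ``$c'\ge 1$'' (or, equivalently, skip the component-of-$G$ reduction entirely and work directly with the global $c'$), and the proof goes through unchanged.
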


\begin{proof}
The Handshake Lemma and Euler's formula for $G$ are:    
    \begin{align*}
        q \cdot v_{int} + \sum_{i=1}^q i\cdot v_i &= 2e, \\ %\label{eqn:vertices} \\
        v - e +n &= c.       %\label{eqn:euler}      %\\
%        p\cdot n &= e_1 + 2e_2       \label{eqn:edges}
    \end{align*}

Using these equations to eliminate $e$ we obtain:
    $$ (q-2)v_{int} + \sum_{i=1}^q (i-2)\cdot v_i = 2n-2c.  $$

The sum of $(i-2)\cdot v_i$ can be negative if $v_1 > 0$, and to avoid this we iteratively prune all degree 1 vertices, one at a time, until there are none left. Cycles in a graph cannot be deleted via this process, therefore since each component is assumed to have at least one face, the resulting graph has the same number of faces, interior vertices, and connected components as $G$. Let $\bar{v}_i$ denote the corresponding parameters in the reduced graph, with $\bar{v}_1=0$. Then

%Now, we modify $G$ to $\bar{G}$ so that $v_1(\bar{G})=0$, we can do so by deleting all the edges that disconnects $G$ and then deleting the vertices that are disconnected. This procedure does not change $n$ nor $v_{int}$, but probably increases the number of connected components, $\bar{c}\geq c$. Thus, 
    $$ (q-2)v_{int} \leq  (q-2)v_{int} + \sum_{i=2}^q (i-2)\cdot \bar{v}_i = 2n-2c \leq  2n -2.$$ 
This gives the desired bounds, choosing the appropriate $q$ in each case.
\end{proof}

%\begin{defn}
%We call $G$ a $\{p,q\}$-graph if $G$ is a planar graph, each vertex of $G$ has degree at most $q$, and each face\footnote{Here and throughout this paper we do not consider the exterior unbounded region of $G$ as a face.} of $G$ has exactly $p$ edges. We set the following notation:
%    \begin{center}
%    \begin{tabular}{r l}
    %\hline
%    $n=n(G)=$ & Number of tiles   \\
%    $e=e(G)=$ & Number of edges   \\
%    $e_i=e_i(G)=$ & Number of edges surrounded by $i$ tiles, $i=0,1,2$.  \\
%    $v=v(G)=$ & Number of vertices    \\
%    $v_{int}=v_{int}(G)=$ & Number of vertices surrounded by $q$ tiles\\ 
%    $v_i=v_i(G)=$   & Number of vertices surrounded by $i-1$ tiles, $i=1,2,\dots,q$.\\
 %   $c=c(G)=$ & Number of connected components of $G$.
    %\hline
 %   \end{tabular}
%    \end{center}
%\end{defn}

%Dual graph

\section{Complete $k$-layered $\{p,q\}$-animals}  \label{section:layers}

In this section, we define and study the main properties of the complete $k$-layered $\{p,q\}$-animals $A_{p,q}(k)$. Recall from Definition \ref{defn:completelayers} that $A_{p,q}(1)$ is the regular $p$-gon, and then $A_{p,q}(k)$ is constructed inductively by attaching all allowable tiles to the perimeter of $A_{p,q}(k-1)$. The graph parameters of $A_{p,q}(k)$ are denoted by $\vk(k)$, $\ek(k)$, $\nk(k)$, etc. 

%The changes in the parameters of $k$-layered animals as each new layer is added can be tracked by characterizing the geometry of how individual tiles are attached. For adding a tile $T$ to an animal $A$, we use the notation $\deg_{A}(T)$ to denote the number of edges $T$ shares with $A$. In general, this is in fact the degree of the vertex representing $T$ in the dual graph of $A\cup T$. More informally, we describe a tile being added to a given animal as being \emph{$\epsilon$-glued} if $\deg_{A}(T)=\epsilon$. In constructing $A_{p,q}(k)$ from $A_{p,q}(k-1)$, we consider each tile individually and examine $\deg_{A_{p,q}(k-1)}(T)$ for any tile $T$ in the $k$-th layer. We will use this parameter to classify the perimeter vertices of each layer by degree; and then find recursive formulas counting the number of each degree in a layer.

As each new layer is added, these parameters can be tracked by characterizing the geometry of how individual tiles are attached. When adding a tile $T$ to an animal $A$, we use the notation $\deg_{A}(T)$ to denote the number of edges $T$ shares with $A$. This is, in fact, the degree of the vertex representing $T$ in the dual graph of $A\cup T$. More informally, we describe a tile being added to a given animal as being \emph{$\epsilon$-glued} if $\deg_{A}(T)=\epsilon$. In constructing $A_{p,q}(k)$ from $A_{p,q}(k-1)$, we consider each tile individually and examine $\deg_{A_{p,q}(k-1)}(T)$ for every tile $T$ in the $k$-th layer. We will use this parameter to classify a layer's perimeter vertices by degree, and then find recursive formulas counting the number of each degree in a layer.

One key observation is that the intersection of a tile in the $k$-th layer with $A_{p,q}(k-1)$ is a connected path, which may be only a single vertex, in which case the tile is \emph{0-glued}. Then a given perimeter edge gets covered with a single tile, and we must consider if that tile can cover any adjacent edges on the perimeter as well. For instance, a tile that is 2-glued will completely cover one shared vertex with $A_{p,q}(k-1)$, which is possible if and only if that vertex already has degree $q$ in $A_{p,q}(k-1)$. Otherwise 0-glued tiles must be filled in next to a 1-glued tile until the degree of the common perimeter vertex is $q$ (see Figure \ref{fig:A3-recursion}).

%More precisely, we define the sequence $A_{p,q}(k)$ inductively as follows. 
%\begin{defn} \label{defn:completelayers}
%The regular $p$-gon is $A_{p,q}(1)$. Suppose that we have constructed $A_{p,q}(k)$, then, to each perimeter vertex of $A_{p,q}(k)$ we attach tiles so that each one of them is now surrounded by $q$ tiles. The result is $A_{k+1}$.
%\end{defn}
%We will refer to $A_{p,q}(k)$ as the $\{p,q\}$-spiral with $k$ complete layers. 
%Now we proceed to count the following parameters of $A_{p,q}(k)$: vertices $\vk(k)$, edges $\ek(k)$, tiles $\nk(k)$, perimeter edges $e_1(k)$, interior edges $e_2(k)$, and interior vertices $v_{int}(k)$. We begin by classifying the perimeter vertices of $A_{p,q}(k)$ by degree, and then finding recursive formulas for each possible type.

\begin{figure}   
    \centering
   \includegraphics[scale=4.5]{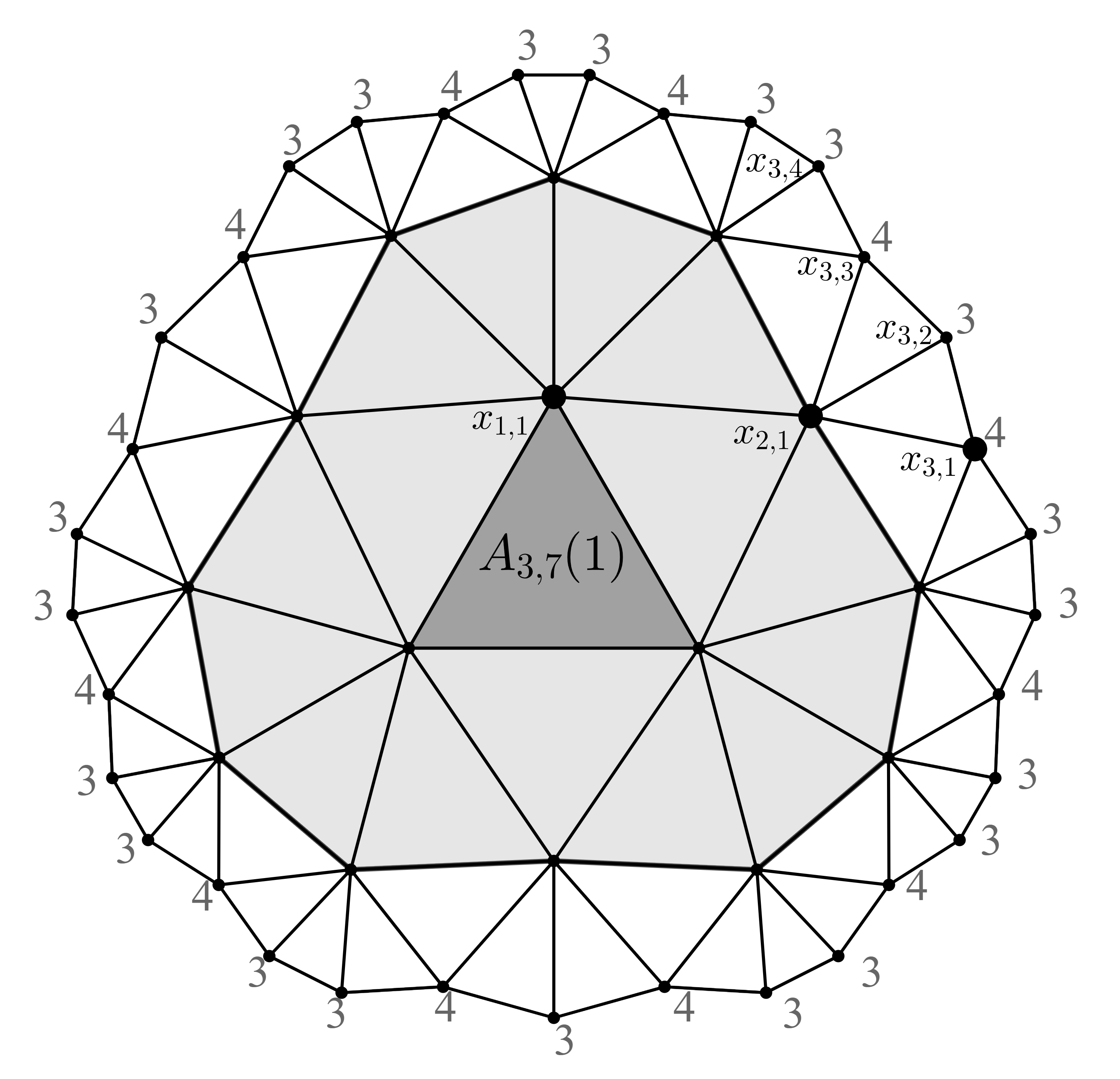}
    \caption{Here we depict $A_{p,q}(k)$ for $\{p,q\}=\{3,7\}$ and $k=3$. Big-dot points mark the beginning of the perimeter vertices on each layer, these are denoted by $x_{k,1}$. The degree of each perimeter vertex of $A_{3,7}(3)$ is written next to it; this is the sequence $d_{3,i}$, starting at $x_{3,1}$ in the counter-clockwise direction.
    See Figure \ref{fig:A3} for the cases $\{4,4\}$ and $\{4,5\}$.} \label{fig:A3-3-7}.
\vspace{-.5cm}
\end{figure}
%\begin{defn} \label{defn:tile-types}
%Let $T$ be a tile in the $k$-th layer. We say that $T$ is \textit{type $\epsilon$} if it shares $\epsilon$ vertices and $\epsilon-1$ edges with $A_{p,q}(k-1)$.
%\end{defn}
%In this definition, the parameter $\epsilon$ tracks the geometry of how tiles are attached to the previous layer. In particular, the intersection of a tile in the $k$-th layer with $A_{p,q}(k-1)$ is a path; only one tile can be used to cover a perimeter edge, and then we must consider if that tile can cover any adjacent edges on the perimeter as well. For instance, a type 3 tile will completely cover one shared vertex with $A_{p,q}(k-1)$, which is possible if and only if that vertex already had degree $q$ in $A_{p,q}(k-1)$. Otherwise type 1 tiles must be filled in next to a type 2 tile until the degrees of the shared perimeter vertices are $q$ (see Figure \ref{fig:A3-recursion}). 
\begin{rmk}\label{rmk:vdeg}
If $p=3$, all perimeter vertices of $A_{p,q}(k)$ are degree 3 or 4, for $k\ge 2$. If $p\ge4$, all perimeter vertices of $A_{p,q}(k)$ are degree 2 or 3. 
\end{rmk}

\begin{rmk}\label{rmk:e-gluing}
If $q=3$, then all tiles in $A_{p,q}(k)$ are either 1-glued or 2-glued to $A_{p,q}(k-1)$, with one 2-glued tile for every vertex of degree 3 in $A_{p,q}(k-1)$. If $q\ge4$, every tile is either 0-glued or 1-glued.
\end{rmk}

Inherent in the proof of the recursions in Proposition \ref{prop:recurrence} below is an inductive argument confirming these remarks, which are inextricably linked with each other. For instance, assuming Remark \ref{rmk:vdeg}, if $q\ge4$ there can never be perimeter vertices of degree $q$, because degree 4 vertices only exist for $p=3$, and $p=3$ requires $q\ge6$. Consequently, there cannot be $\epsilon$-glued tiles for $\epsilon\ge 2$ when $q\ge 4$. Conversely, if $p=3$, then $q\ge6$ and every tile is 0-glued or 1-glued, and the ensuing geometry can be used to show that all perimeter vertices have degree 3 or 4.

\begin{prop}        \label{prop:recurrence}
Recall that perimeter vertices of $A_{p,q}(k)$ are counted by $\vk_i(k)$ according to their degree $i$. So $\vk_2(k)$, $\vk_3(k)$ and $\vk_4(k)$ are the number of perimeter vertices of $A_{p,q}(k)$ of degree 2, 3, and 4, respectively. Then, for $p=3$ and $k\geq3$,
    \begin{align*}
    \vk_3(k) &= (q-5)\vk_3(k-1) + (q-6)\vk_4(k-1),    \\
    \vk_4(k) &=  \vk_3(k-1) + \vk_4(k-1). 
    \end{align*}
    For $p\geq 4$ and $k\geq2$, 
    \begin{align*}
    \vk_2(k) &=\left((p-3)(q-2)-1 \right)  \vk_2(k-1) + \left((p-3)(q-3)-1 \right)  \vk_3(k-1),     \\
    \vk_3(k) &= (q-2) \vk_2(k-1) + (q-3) \vk_3(k-1).
    \end{align*}
\end{prop}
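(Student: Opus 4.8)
The plan is to prove the recursions by a careful inductive analysis of how the $(k-1)$-st perimeter gives rise to the $k$-th perimeter, exploiting the gluing dichotomies recorded in Remarks \ref{rmk:vdeg} and \ref{rmk:e-gluing}. The base case is that Remarks \ref{rmk:vdeg} and \ref{rmk:e-gluing} hold for $A_{p,q}(k-1)$; I will run the induction on these remarks simultaneously with the recursion, since as the excerpt notes they are ``inextricably linked.'' The key structural fact to establish first is a local picture: walk along the perimeter of $A_{p,q}(k-1)$ in the counter-clockwise direction, reading off the cyclic degree sequence $d_{k-1,i}$, and determine exactly which and how many new tiles and new perimeter vertices each configuration produces.

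First I would treat the case $p \geq 4$, $q \geq 4$, where every tile of the $k$-th layer is $0$- or $1$-glued and every perimeter vertex of $A_{p,q}(k-1)$ has degree $2$ or $3$. A perimeter vertex $x$ of degree $i$ needs $q-i$ more incident tiles to be saturated; these are added in the $k$-th layer, and around $x$ they contribute $q-i$ new tiles but only $q-i-1$ new ``petal'' vertices strictly between consecutive new edges at $x$ (the two outermost new edges each terminate at a vertex shared with the fan around a neighboring old perimeter vertex). The bookkeeping then amounts to: (i) each new tile that is $1$-glued along an old perimeter edge $e = x_{k-1,j}x_{k-1,j+1}$ contributes, on its outer boundary, $p-2$ new perimeter edges and hence $p-3$ new interior-of-the-tile perimeter vertices of degree $2$, plus shares its two end vertices; (ii) count the degree of each new perimeter vertex by how many new tiles meet there. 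Carrying this out, a vertex of old degree $2$ spawns a fan of $q-2$ tiles, and a vertex of old degree $3$ spawns a fan of $q-3$ tiles; combining the ``petal'' vertices (which end up degree $3$) with the ``interior of tile'' vertices (degree $2$) across an old edge and summing over all old perimeter vertices and edges yields the stated coefficients $(p-3)(q-2)-1$, $(p-3)(q-3)-1$, $q-2$, $q-3$. I would organize this as: assign to each old perimeter \emph{vertex} of degree $i$ a contribution vector $(\Delta \vk_2, \Delta \vk_3)$ counting petal and newly-created-degree-$2$ vertices ``owned'' by that vertex, and to each old perimeter \emph{edge} a contribution $(p-3, 0)$ or similar for the tile-interior vertices, being careful about double counting at shared vertices; since along the perimeter the number of edges equals the number of vertices, $\vk_2(k-1)+\vk_3(k-1)$, everything closes up into a linear recursion in $\vk_2(k-1), \vk_3(k-1)$.

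For $p = 3$ (so necessarily $q \geq 6$) the analysis is similar but the perimeter vertex degrees are $3$ and $4$, and triangles being $0$- or $1$-glued forces a slightly different petal count: a $1$-glued triangle in the $k$-th layer has only $p - 2 = 1$ outer edge, so it contributes no tile-interior perimeter vertex, and the entire new perimeter is built from petal vertices of the fans. A degree-$3$ old vertex needs $q - 3$ new triangles, a degree-$4$ old vertex needs $q - 4$; I would check that among the $q-i-1$ petal vertices at such a fan, exactly one becomes degree $4$ (the ``tip'' where two fans from adjacent old vertices press together, i.e. the shared endpoint) and the rest become degree $3$, then sum, again using that the number of old perimeter edges equals $\vk_3(k-1) + \vk_4(k-1)$, to obtain the coefficients $q-5$, $q-6$, and $1,1$. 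The main obstacle — and the place I'd be most careful — is the correct attribution of the \emph{shared} new vertices (the ones lying between the fan of one old perimeter vertex and the fan of the next), since these are exactly the vertices whose final degree is one higher than a generic new vertex, and getting their count right (one per old edge, versus one per old vertex) is what distinguishes, e.g., $(q-2)\vk_2 + (q-3)\vk_3$ from a naive $(q-1)\vk_2 + (q-2)\vk_3$; I would nail this down by fixing an explicit ``fan + bridge'' decomposition of the $k$-th layer indexed by old perimeter edges, proving each old edge contributes exactly one bridge tile (the $1$-glued tile on that edge) and each old vertex of degree $i$ contributes exactly $q - i - 1$ interior $0$-glued tiles, and then reading degrees off that decomposition.
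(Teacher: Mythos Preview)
Your approach is essentially the paper's: both perform a local analysis around each perimeter vertex of $A_{p,q}(k-1)$, invoking Remarks \ref{rmk:vdeg} and \ref{rmk:e-gluing} inductively, and then attribute the new perimeter vertices carefully so that the shared ones between adjacent fans are not double-counted. The only difference is cosmetic bookkeeping---the paper assigns to each old vertex of degree $d$ the first $q-d$ new tiles in the counter-clockwise direction (the first $1$-glued, the rest $0$-glued) rather than splitting into ``bridges on edges'' and ``fans at vertices''---and you should fix the off-by-one slip (a degree-$i$ perimeter vertex is incident to $i-1$ tiles and hence needs $q-i+1$ new tiles, not $q-i$) when you write this out in full.
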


\begin{proof}
Suppose $p=3$. Then $q\ge6$ and a perimeter vertex $x$ of $A_{p,q}(k-1)$ of degree $d = 3,4$ belongs to $d-1$ tiles of $A_{p,q}(k-1)$. To construct $A_{p,q}(k)$, we attach $q-d+1$ tiles to $x$. To avoid overlap, we associate to $x$ only the first $q-d$ tiles in the counterclockwise direction. Note that $q-d\ge 2$. The first tile is always 1-glued to $A_{p,q}(k-1)$, while the remaining $q-d-1\ge 1$ are 0-glued at a common vertex. So the first tile contributes one perimeter vertex of $A_{p,q}(k)$ of degree 4, and the next $q-d-2$ tiles each contribute one vertex of degree 3. The last tile associated to $x$ does not add to this count, as it shares one perimeter vertex with the previous tile associated to $x$, and the other with a 1-glued tile whose perimeter vertex is counted by the next vertex of $A_{p,q}(k)$. In summary, all perimeter vertices of $A_{p,q}(k-1)$ of degree 3 contribute in total to $(q-5)\vk_3(k-1)$ perimeter vertices of $A_{p,q}(k)$ of degree 3 and $\vk_3(k-1)$ vertices of degree 4. Similarly, perimeter vertices of $A_{p,q}(k-1)$ of degree 4 contributes to $(q-6)\vk_4(k-1)$ perimeter vertices of $A_{p,q}(k)$ of degree 3 and $\vk_4(k-1)$ vertices of degree 4. By adding these two contributions, we arrive at the recurrence formulas for $p=3$.

\begin{figure}[h]  
    \centering
   \includegraphics[scale=5]{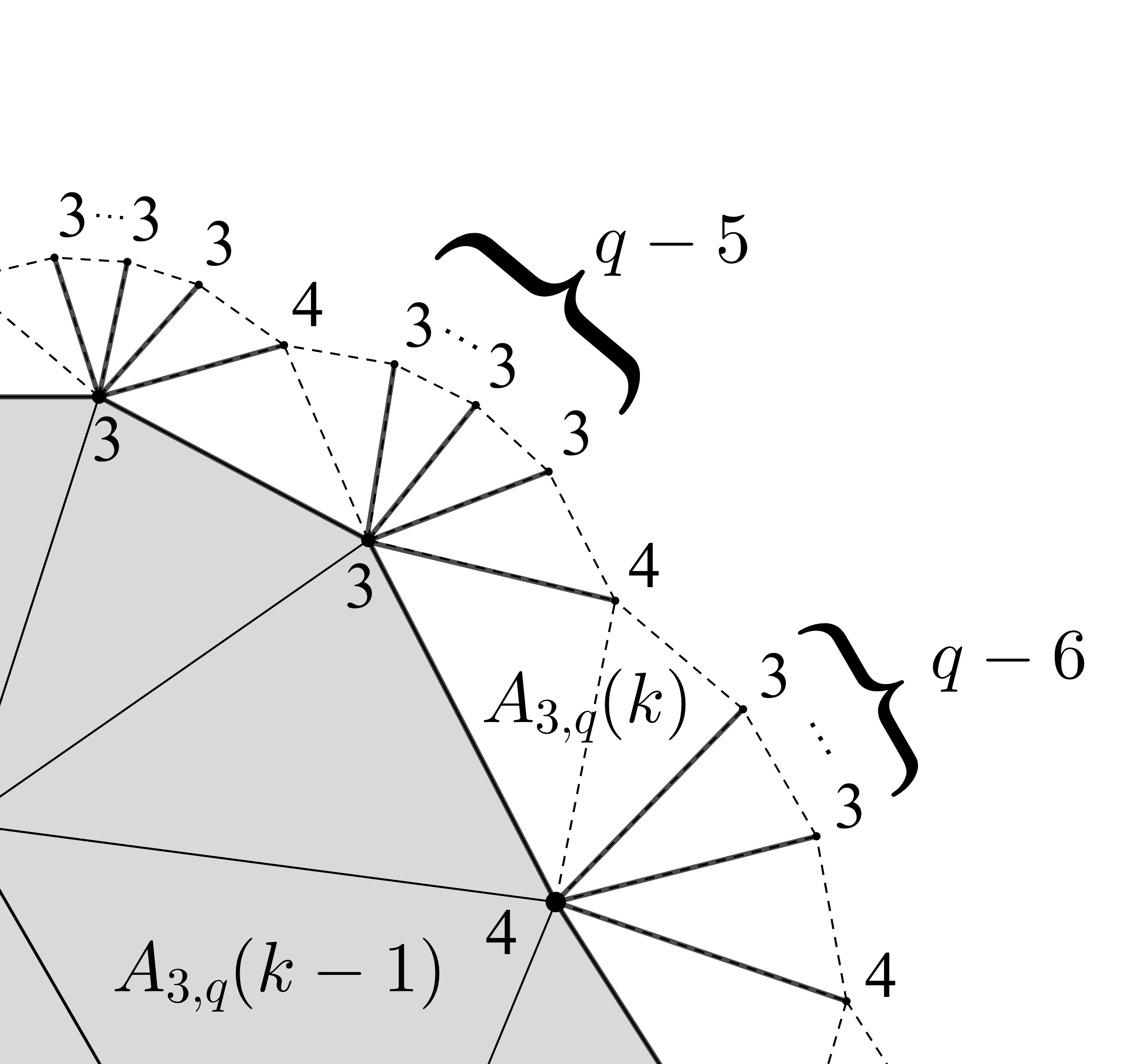}
   \includegraphics[scale=0.045]{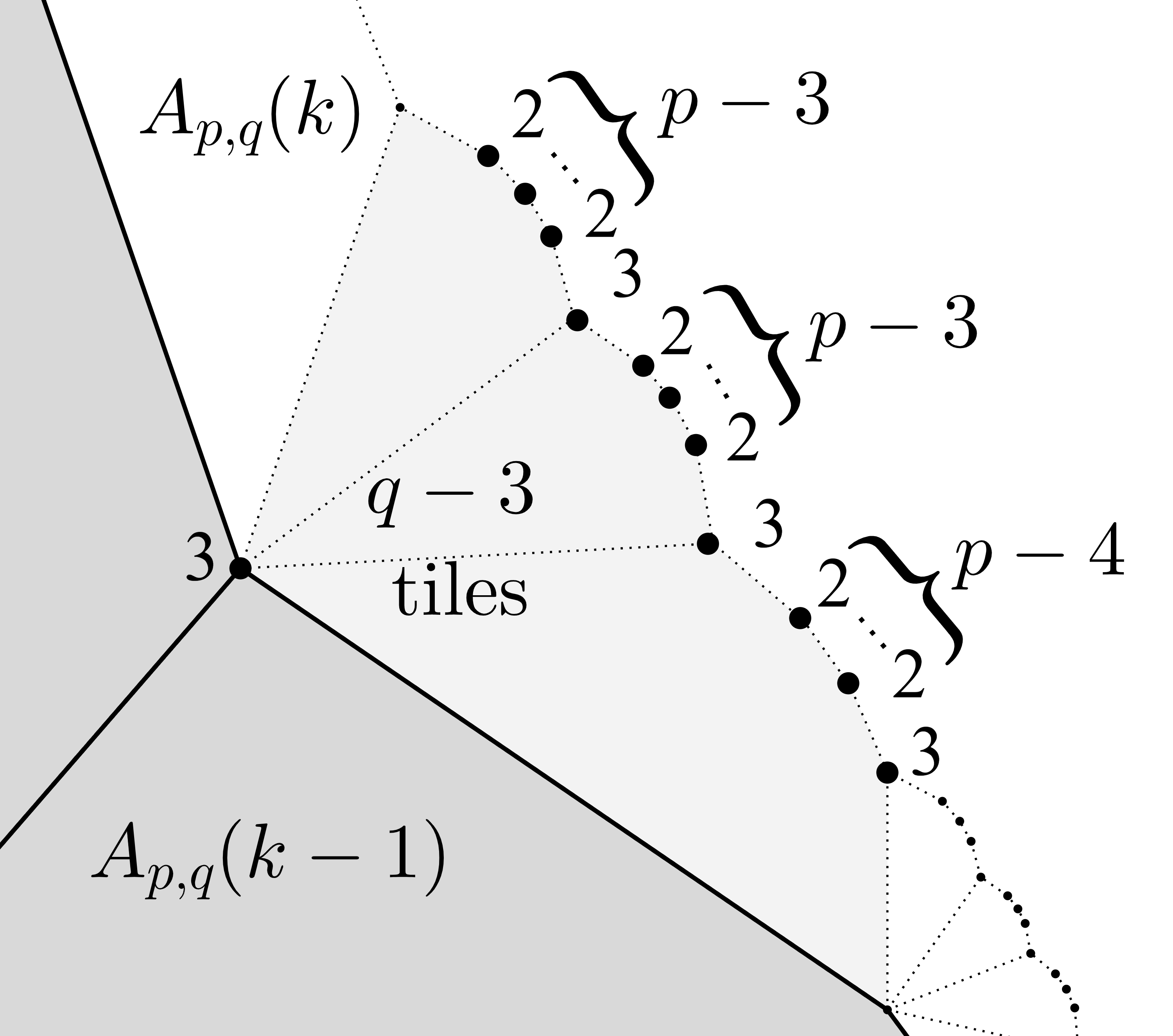}
    \caption{\textit{Left:} When $p=3$, each vertex of degree $d=3$ or $4$ in $A_{p,q}(k-1)$, contributes to a vertex of degree 4 in $A_{p,q}(k)$ and $q-d-2$ vertices of degree $= 3$. \textit{Right:} When $p\geq4$, each vertex of degree $3$ in $A_{p,q}(k-1)$, contributes to $q-3$ vertices of degree $=3$ in $A_{p,q}(k)$ and  $(q-3)(p-3)-1$ vertices of degree $=2$.}  \label{fig:A3-recursion}
\end{figure}

The case for $p\geq 4$ is analogous. Each perimeter vertex of degree $d$ in $A_{p,q}(k-1)$ has $q-d+1$ tiles of $A_{p,q}(k)$ attached to it, but we consider only the first $q-d$ tiles, to avoid overlapping. Note that the first tile is always 1-glued to $A_{p,q}(k-1)$, and the rest are 0-glued to a common vertex. So the first has $p-2$ perimeter vertices, and the rest have $p-1$, but we subtract the last one in a counterclockwise direction from each to avoid overlap. Then each of the $q-d$ tiles contributes one perimeter vertex of $A_{p,q}(k)$ of degree 3, which is incident to the adjacent tile in the clockwise direction, and $p-3$ vertices of degree 2, with the exception of the first tile which has one vertex of degree 3 and $p-4$ of degree 2. In summary, a perimeter vertex of $A_{p,q}(k-1)$ of degree $d$ contributes $q-d$ vertices of degree 3 to $A_{p,q}(k)$ and $(q-d)(p-3)-1$ vertices of degree 2. This gives the recurrence relations for $p\geq4$.
\end{proof}

\begin{lem} \label{lem:recurrence-vert}
Let $t=(p-2)(q-2)-2$ and $\alpha=\frac{t+\sqrt{t^2-4}}{2}$, $\beta=\frac{t-\sqrt{t^2-4}}{2}$. The solution to the recurrence relations of Proposition \ref{prop:recurrence} are:
    \begin{enumerate}
        \item When $t=2$ we have the Euclidean cases:
            \begin{align*}
            \text{For } \{p,q\}= & \,\{3,6\}, \, k\geq2,  & \text{For } \{p,q\}=&\, \{4,4\},       &  \text{For }\{p,q\}=&\,\{6,3\},   \\
            \vk_3(k) &= 6,             &   \vk_2(k)&= 4,          &  \vk_2(k)&= 6k,\\
            \vk_4(k) &= 3+6(k-2),     &   \vk_3(k) &=8(k-1) ,     &  \vk_3(k)&= 6(k-1).
            \end{align*}                
        \item For $p=3$, $q>6$ and $k\geq 2$,
            \begin{align*}
            \vk_3(k) &=\frac{3}{\alpha-\beta} \left( (\alpha-1)(q-3-\beta)\alpha^{k-2} + (1-\beta)(q-3-\alpha)\beta^{k-2}  \right),   \\
            \vk_4(k) &=   \frac{3}{\alpha-\beta} \left( (q-3-\beta)\alpha^{k-2} - (q-3-\alpha)\beta^{k-2}  \right).  
            \end{align*}
        \item For $p\geq 4$ and  $(p-2)(q-2)>4,$
            \begin{align*}
            \vk_2(k) &=\frac{p}{\alpha-\beta} \left( -(q-3-\alpha)\alpha^{k-1} + (q-3-\beta)\beta^{k-1}  \right),   \\
            \vk_3(k) &=   \frac{p}{\alpha-\beta} \left( (q-2)\alpha^{k-1} - (q-2)\beta^{k-1}  \right).  
            \end{align*}        
    \end{enumerate}
\end{lem}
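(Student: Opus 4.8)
The plan is to recognize each pair of recursions in Proposition~\ref{prop:recurrence} as a first-order linear system $\mathbf{v}(k)=M\,\mathbf{v}(k-1)$ with a fixed $2\times2$ integer matrix $M$, to diagonalize $M$, and to pin down the two free constants using the base configurations. For $p\ge4$ I set $\mathbf{v}(k)=(\vk_2(k),\vk_3(k))^{\!\top}$ and
\[
  M=\begin{pmatrix} (p-3)(q-2)-1 & (p-3)(q-3)-1 \\ q-2 & q-3 \end{pmatrix},
\]
so that $\mathbf{v}(k)=M\,\mathbf{v}(k-1)$ for $k\ge2$; for $p=3$ I set $\mathbf{v}(k)=(\vk_3(k),\vk_4(k))^{\!\top}$ and
\[
  M=\begin{pmatrix} q-5 & q-6 \\ 1 & 1 \end{pmatrix},
\]
so that $\mathbf{v}(k)=M\,\mathbf{v}(k-1)$ for $k\ge3$. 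The computation at the heart of everything is that in \emph{both} cases $\operatorname{tr}M=t$ and $\det M=1$: for $p\ge4$ one checks $\operatorname{tr}M=(p-3)(q-2)-1+(q-3)=pq-2p-2q+2=t$ and $\det M=\bigl((p-3)(q-2)-1\bigr)(q-3)-\bigl((p-3)(q-3)-1\bigr)(q-2)=(q-2)-(q-3)=1$, while for $p=3$ one has $\operatorname{tr}M=q-4=t$ and $\det M=(q-5)-(q-6)=1$. Hence the characteristic polynomial of $M$ is $\lambda^2-t\lambda+1$, whose roots are exactly $\alpha$ and $\beta$, with $\alpha\beta=1$ and $\alpha+\beta=t$.

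In the situations of parts~(2) and~(3) we have $(p-2)(q-2)>4$, so $t>2$, hence $t^2-4>0$ and $\alpha\ne\beta$; therefore $M$ is diagonalizable, its eigenvectors can be read off directly, and every coordinate of $\mathbf{v}(k)$ has the form $A\alpha^{k}+B\beta^{k}$. (Equivalently, the Cayley--Hamilton identity $M^2=tM-I$ applied to $\mathbf{v}(k-2)$ shows each coordinate obeys the scalar recursion $x_k=t\,x_{k-1}-x_{k-2}$, whose characteristic equation is again $\lambda^2-t\lambda+1=0$.) It then remains only to determine $A$ and $B$ from two consecutive terms of each sequence. For $p\ge4$ the seed is $A_{p,q}(1)$, a single $p$-gon with all vertices of degree $2$, so $\vk_2(1)=p$ and $\vk_3(1)=0$; applying $M$ once gives $\vk_2(2)=p\bigl((p-3)(q-2)-1\bigr)$ and $\vk_3(2)=p(q-2)$, and solving the two resulting $2\times2$ linear systems for $A$ and $B$ yields the closed forms in part~(3). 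For $p=3$ the recursion only begins at $k=3$, so one must instead analyze $A_{3,q}(2)$ by hand: it is the single triangle with each of its three vertices saturated, and counting the $q-1$ tiles attached around an original vertex shows that the three ``edge'' tiles each contribute one perimeter vertex of degree $4$, while the three fans of $q-3$ tiles contribute $q-4$ perimeter vertices of degree $3$ apiece; this gives $\vk_4(2)=3$ and $\vk_3(2)=3(q-4)$. Applying $M$ once more and solving the linear systems then yields part~(2).

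For the Euclidean cases in part~(1), $(p-2)(q-2)=4$ forces $t=2$, so $\lambda^2-2\lambda+1$ has the double root $\alpha=\beta=1$ and each coordinate of $\mathbf{v}(k)$ now has the form $A+Bk$; substituting the same base data ($\vk_2(1)=p$, $\vk_3(1)=0$ for $\{4,4\}$ and $\{6,3\}$, and $\vk_3(2)=3(q-4)=6$, $\vk_4(2)=3$ for $\{3,6\}$) produces the stated linear formulas --- or one simply verifies part~(1) directly by plugging the claimed expressions back into the recursions. Once $\operatorname{tr}M=t$ and $\det M=1$ are established, everything is routine linear algebra; the one step I expect to need genuine care is the hand analysis of the base layer when $p=3$, precisely because Proposition~\ref{prop:recurrence} was derived under the hypothesis that perimeter vertices have degree $3$ or $4$, whereas the seed triangle $A_{3,q}(1)$ has all vertices of degree $2$, so $\vk_3(2)$ and $\vk_4(2)$ must be computed geometrically before the recursion can be iterated. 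A minor, purely algebraic, annoyance is choosing the eigenvector normalization so that the solution comes out in exactly the stated shape.
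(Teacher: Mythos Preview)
Your proposal is correct and follows essentially the same route as the paper: recognize the recursions as a $2\times2$ linear system with trace $t$ and determinant $1$, diagonalize (or use the scalar recursion $x_k=t\,x_{k-1}-x_{k-2}$), and fit the constants to the initial data $\vk_2(1)=p$, $\vk_3(1)=0$ for $p\ge4$ and $\vk_3(2)=3(q-4)$, $\vk_4(2)=3$ for $p=3$. The paper presents the diagonalization matrix explicitly rather than solving for $A,B$ directly, but the content is identical.
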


\begin{proof} This is the result of solving a linear system of $2\times2$ recurrence relations; the associated matrix in both cases has determinant 1 and trace $t=(p-2)(q-2)-2$. The characteristic polynomial is then $x^2-tx+1$, and its roots are $\alpha$ and $\beta$. 

When $t=2$, we have $\alpha=\beta=1$, and the growth is linear. When $t>2$, we get exponential growth in terms of $\alpha$ and $\beta$. The initial conditions are $\vk_3(2)=3(q-4)$, $\vk_4(2)=3$ for $p=3$, and $\vk_2(1)=p$, $\vk_3(1)=0$ for $p\geq4$. The diagonalization matrix is
    \begin{align*}
        M =  \begin{pmatrix}
                1-\alpha & 1-\beta \\
                -1 & -1 
            \end{pmatrix}
            \text { for } p=3, \text{ or } 
            M=\begin{pmatrix}
                \frac{q-3-\alpha}{q-2} & \frac{q-3-\beta}{q-2} \\
                -1 & -1 
            \end{pmatrix} \text { for } p\geq 4.
    \end{align*}
The $k$th element of the sequences can be computed from 
    \begin{align*}
        \begin{pmatrix} 
                \vk_3(k) \\
                \vk_4(k)
        \end{pmatrix} = M \begin{pmatrix}
                \alpha^{k-2} & 0 \\
                0 & \beta^{k-2} 
            \end{pmatrix} M^{-1}
        \begin{pmatrix} 
                \vk_3(2) \\
                \vk_4(2)
        \end{pmatrix}, \text{ for $p=3$,}    
    \end{align*}

    \begin{align*}
        \begin{pmatrix} 
            \vk_2(k) \\
            \vk_3(k)
        \end{pmatrix} = M \begin{pmatrix}
                \alpha^{k-1} & 0 \\
                0 & \beta^{k-1} 
            \end{pmatrix} M^{-1}
        \begin{pmatrix} 
                \vk_2(1) \\
                \vk_3(1)
        \end{pmatrix}, \text{ for } p\geq 4.
    \end{align*}
From these we obtain the desired expressions.
\end{proof}

\begin{proof}[Proof of Theorem \ref{thm:layers}] 
Firstly, we work the case $p\geq 4$. The vertices of $A_{p,q}(k)$ consists of its perimeter plus those of $A_{p,q}(k-1)$, that is, $\vk(k)=\vk(k-1)+\vk_2(k)+\vk_3(k)$, then 
    $$\vk(k) = \sum_{j=1}^k \vk_2(j) + \vk_3(j).$$    
%Also, $v_{int}(k) = \vk(k-1)$. 

As the perimeter of $A_{p,q}(k)$ is a cycle, then $\ek_1(k)$ is equal to the number of vertices on the perimeter. That is, $\ek_1(k)=\vk_2(k)+\vk_3(k)$. Next, $\ek_2(k)$ consists of the interior and perimeter edges of $A_{p,q}(k-1)$, plus the interior edges of $A_{p,q}(k) \setminus A_{p,q}(k-1)$, which also correspond to the edges that connect perimeter vertices of $A_{p,q}(k-1)$ and $A_{p,q}(k)$. The latter are precisely $q-d$ for each vertex of degree $d$ in the perimeter of $A_{p,q}(k-1)$. Then $e_{2}(k)=\ek_2(k-1)+\ek_1(k-1)+(q-2)\vk_2(k-1) + (q-3)\vk_3(k-1)=\ek_2(k-1)+(q-1)\vk_2(k-1)+(q-2)\vk_3(k-1)$. Now, recalling that $\ek_2(1)=0$, we get, 
$$\ek_2(k)= (q-1)\sum_{j=1}^{k-1}\vk_2(j) + (q-2)\sum_{j=1}^{k-1}\vk_3(j).$$
Finally, $\ek(k)=\ek_1(k)+\ek_2(k)$ and $\nk(k)=1-\vk(k)+\ek(k)$. All the previous, together with the expressions for $\vk_2(k)$ and $\vk_3(k)$, give us the formulas for Theorem \ref{thm:layers}. We just have to notice that $\beta=\alpha^{-1}$, $\alpha+\beta=t$, and $\alpha-\beta=\sqrt{t^2-4}$.

The case for $p=3$ is analogous. The only subtle difference is that 
$$\ek_2(k)= (q-2)\sum_{j=1}^{k-1}\vk_3(j) + (q-3)\sum_{j=1}^{k-1}\vk_4(j).$$
\end{proof}

Here, we collect the formulas for $A_{p,q}(k)$ in the hyperbolic case, $(p-2)(q-2)>4$.
\begin{equation}\label{eq:allAparams}
    \begin{split}            
        \nk(k)&= 1 + \frac{p(q-2)}{(\alpha-1)\sqrt{t^2-4}}  \left( \alpha^k + \frac{1}{\alpha^{k-1} } -\alpha-1   \right)  \\
        \vk(k)&=  \frac{p}{t-2} \left( \alpha^k+ \frac{1}{\alpha^k} -2\right)  \\
        \ek(k)    &=   \frac{p}{(\alpha-1)\sqrt{t^2-4}} \left( (\alpha+q-1)\alpha^k+\frac{\alpha q -\alpha+1}{\alpha^k} - q(\alpha+1) \right)    \\
        \ek_1(k) &=  \frac{p(\alpha+1)}{\sqrt{t^2-4}} \left( \alpha^{k-1} -\frac{1}{\alpha^k} \right)     \\
        \ek_2(k)&= \frac{p}{(\alpha-1) \sqrt{t^2-4}} \left( (\alpha q-\alpha+1) \alpha^{k-1} + \frac{q-1+\alpha}{\alpha^{k-1}} -q(\alpha+1) \right)     \\       
        \vk_{int}(k) & = \frac{p}{t-2} \left( \alpha^{k-1}+\frac{1}{\alpha^{k-1}} -2\right)
    \end{split}
\end{equation}

Now, we establish how to construct the sequences $d_{k,i}$. To begin with, we order the vertices on the boundary of $A_{p,q}(k)$, labeling them $x_{k,i}$ for $1\le i\le \ek_1(k)$. 

\begin{defn}  \label{defn:vertices}
For $A_{p,q}(1)$, label the vertices $x_{1,1},x_{1,2}, \dots , x_{1,p}$ in the counter-clockwise direction. For $k\geq 2$, suppose that we have labelled the perimeter vertices of $A_{p,q}(k-1)$, and let $T$ be the tile of $A_{p,q}(k)$ attached to both $x_{k-1,1}$ and $x_{k-1,\ek_1(k-1)}$, the first and last vertices of $A_{p,q}(k-1)$. Define $x_{k, 1}$ to be the vertex of $T$ adjacent to $x_{k-1,\ek_1(k-1)}$, and different from $x_{k-1,1}$. Then, the perimeter vertices of $A_{p,q}(k)$ are labelled in the counter-clockwise direction starting at $x_{k,1}$. See Figures \ref{fig:S37-4-5} and \ref{fig:A3-3-7}. % Note that When $q=3$ and $k\geq 3$, the tile $T$ is attached to $x_k(1)$, $x_k(2)$ and $x_k(e_1(k))$, then $x_{k+1}(1)$ is the vertex of $T$ adjacent to $x_k(e_1(k))$, and different from $x_k(1)$.

\end{defn}

Now, recall from the proof of Proposition \ref{prop:recurrence} that each vertex in $A_{p,q}(k)$ generates vertices on the next layer according to its degree. Let $d_k$ denote the sequence $\{d_{k,i}\}$. We can compute the sequence $d_{k+1}$ from $d_k$ in the following manner. See the ensuing figures.

Firstly, $d_{1}$ is the sequence $2,2,\dots, 2$ containing $p$ elements. Then, $d_{k}$ is constructed from $d_{k-1}$ by replacing the $i$th element $d_{k-1,i}$ with the string of the degrees of the perimeter vertices of $A_{p,q}(k)$ associated to $x_{k-1,i}$. More precisely:
If $p\geq 4$ and $q\geq 4$, $d_{k-1,i}$ is replaced by
            $$\underbrace{  \underbrace{3,2,\dots,2}_{p-3}, \underbrace{3,2,\dots,2}_{p-2}, \dots , \underbrace{3,2,\dots,2}_{p-2}   
                }_{q-d_{k-1,i} \text{ groups of } 3,2,\dots,2}. $$
If $p=3$, and $q\geq 6$, $d_{k-1,i}$ is replaced by 
            $$\underbrace{4,3,\dots, 3}_{q-d_{k-1,i}-1}.$$
Finally, if $q=3$, and $p\geq 6$, $d_{k-1,i}=2$ is replaced by 
            $$\underbrace{3,2,\dots ,2}_{p-3} \text{ or } \underbrace{3,2,\dots ,2}_{p-4} $$
            depending whether $d_{k-1,i}=2$ is preceded by a 2 or a 3, respectively. On the other hand, $d_{k-1,i}=3$ is simply erased.

\vspace{.5cm}
\begin{ex} \label{ex:degree-sequence}
Here, we compute the sequences $d_k$ for the Euclidean cases and some hyperbolic ones. As expected, from Lemma \ref{lem:recurrence-vert}, we will encounter linear growth in the former and exponential in the latter. In \cite{beautifulanimals}, we also provide an applet to explore these examples and their numerical properties.
\vspace{.5cm}
    \begin{itemize}
        \item[A)] For $\{p,q\}=\{3,6\}$, the substitution rules are: $4\to 4$; $3 \to 4,3$; and $2\to 4,3,3$. The first sequences are
        \begin{align*}
        d_1 &= 2,2,2    &  \\
        d_2 &= 4,3,3, \dots  & (\text{3 times})    \\
        d_3 &= 4,4,3,4,3, \dots & (\text{3 times})     \\
        d_4 &= 4,4,4,3,4,4,3, \dots & (\text{3 times})
        \end{align*}
        It is easy to see that, for $k\geq 3$, $d_k$ consists of the sequence $\underbrace{4,4\dots, 4,3}_{k}\underbrace{4,4\dots, 4,3}_{k-1}$, repeated 3 times. 

  \begin{figure}[h]   
    \centering
   \includegraphics[scale=4.5]{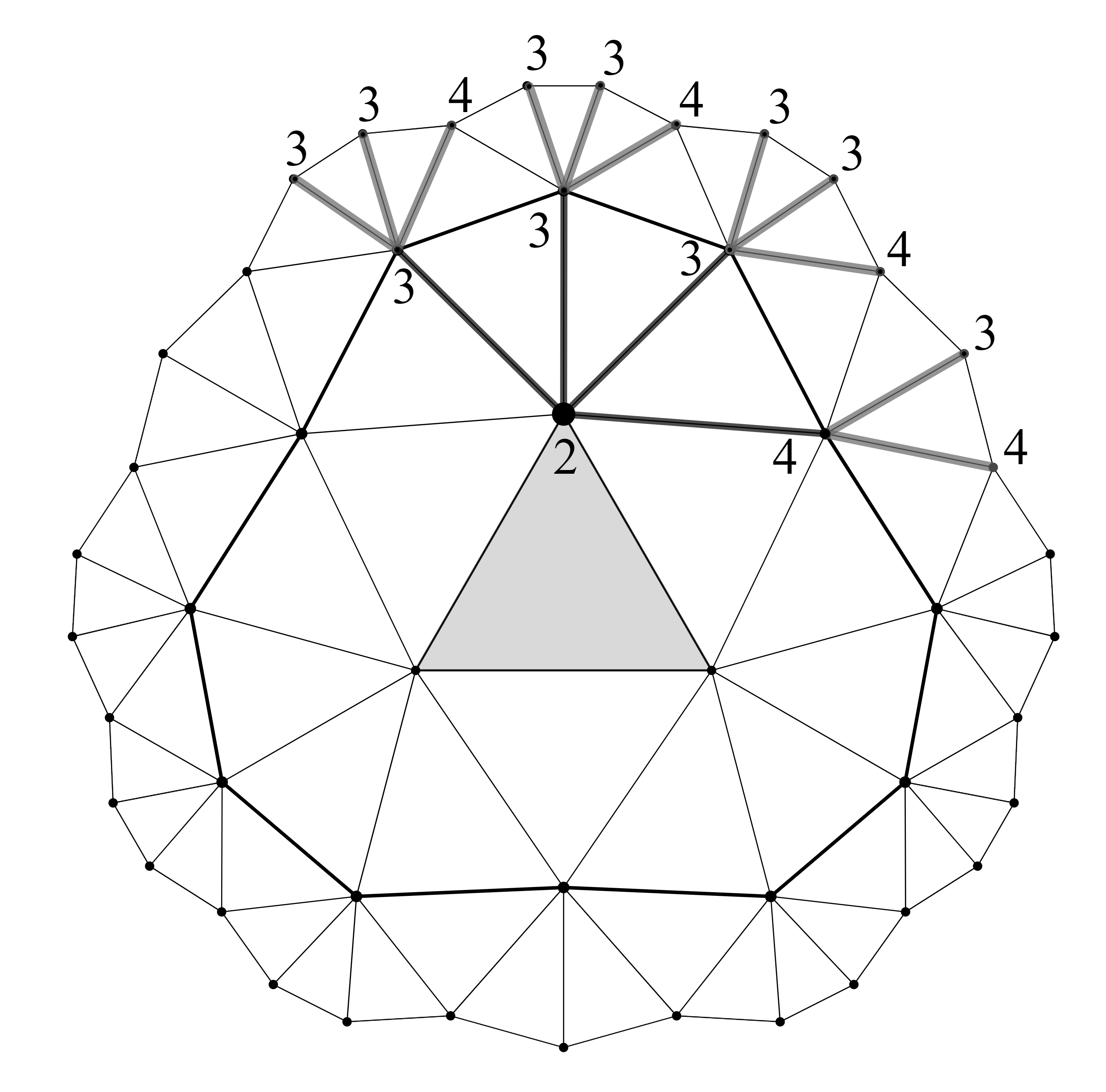}
    \caption{Graphic representation of the substitution rules for $\{p,q\}= \{3,7\}$. From $A_{3,7}(1)$ to $A_{3,7}(2)$ we apply $2\to 4,3,3,3$. From $A_{3,7}(2)$ to $A_{3,7}(3)$ we apply $3 \to 4,3,3$ and $4\to 4,3$. We observe that in this case we only have vertices of degree two in $A_{3,7}(1)$ but then all the vertices of $A_{3,7}(k)$ for $k>1$ will have degree $3$ or $4$.} \label{fig:A3-p3-substitution}
\end{figure}
\vspace{.5cm}
        \item[B)] For $\{p,q\}=\{3,7\}$, the substitution rules are: $4\to 4,3$; $3 \to 4,3,3$; and $2\to 4,3,3,3$. The first sequences are
        \begin{align*}
        d_1 &= 2,2,2     &  \\
        d_2 &= 4,3,3,3, \dots  & (\text{3 times})    \\
        d_3 &= 4,3,4,3,3,4,3,3,4,3,3, \dots & (\text{3 times})     \\
        d_4 &= 4,3,4,3,3,4,3,4,3,3,4,3,3,4,3,4,3,3,4,3,3,4,3,4,3,3,4,3,3, \dots & (\text{3 times})
        \end{align*}  
        
    \item[C)] For $\{p,q\}=\{4,4\}$, the substitution rules are: $3\to 3$, and $2 \to 3,3,2$. The first sequences are
        \begin{align*}
        d_1 &= 2,2,2,2     &  \\
        d_2 &= 3,3,2, \dots  & (\text{4 times})    \\
        d_3 &= 3,3,3,3,2, \dots & (\text{4 times})     \\
        d_4 &= 3,3,3,3,3,3,2, \dots & (\text{4 times})
        \end{align*}
        It is easy to see that, for $k\geq 2$, $d_k$ consists of the sequence $\underbrace{3,3\dots, 3,2}_{2k-1}$, repeated 4 times.
        
        \begin{figure}[h]   
    \centering
   \includegraphics[scale=4.5]{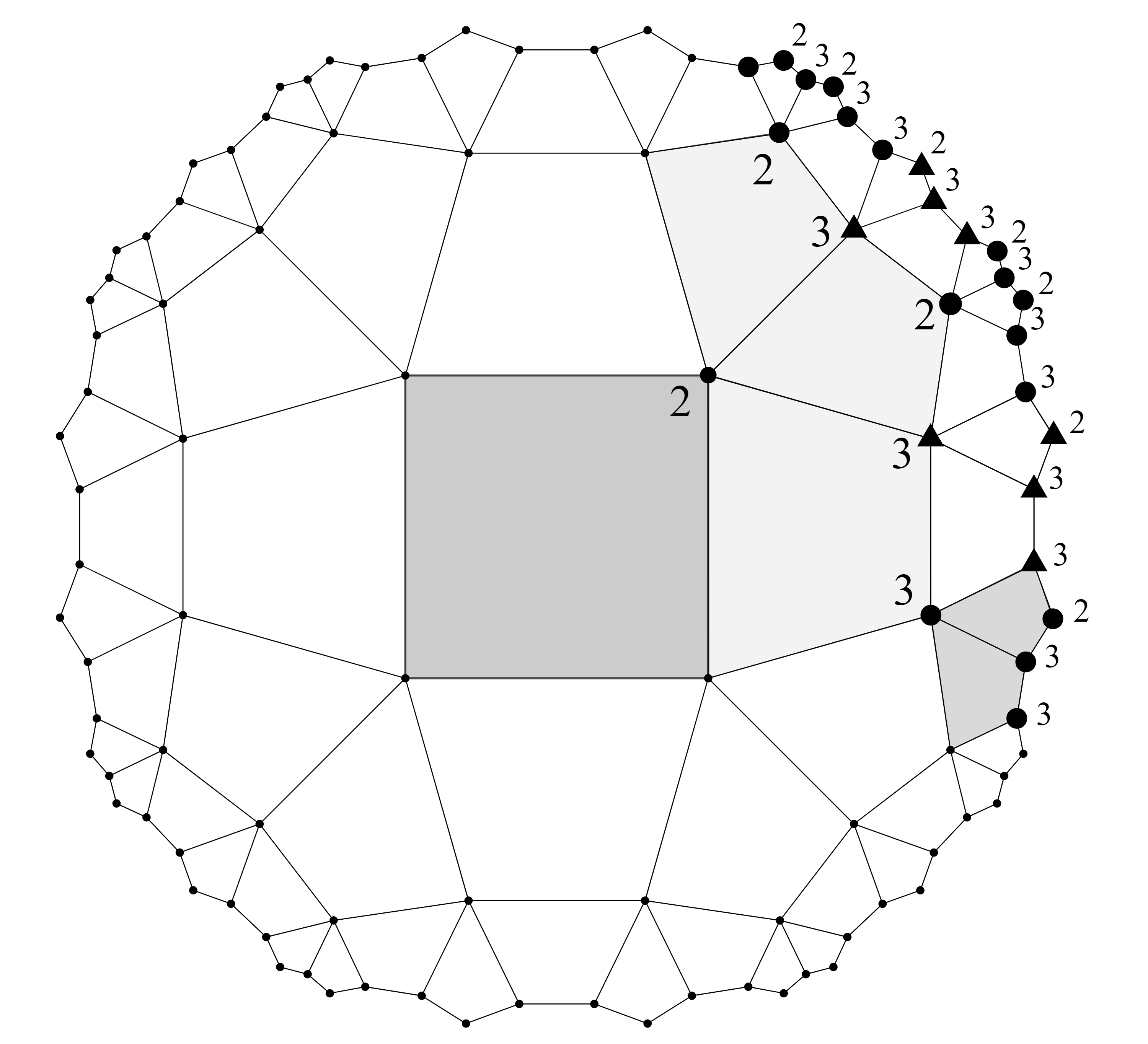}
    \caption{Graphic representation of the substitution rules for $\{p,q\}= \{4,5\}$. From $A_{4,5}(1)$ to $A_{4,5}(2)$ we apply $2\to 3,3,2,3,2$; the associated tiles are shaded in light gray. We alternate the symbols {\scriptsize $\blacktriangle$} and $\bullet$ to emphasize the substitution rules for each perimeter vertex of $A_{4,5}(2)$. } \label{fig:A3-4-5-substitution}
    \end{figure}
\vspace{.5cm}
        \item[D)] For $\{p,q\}=\{4,5\}$, the substitution rules are: $3 \to 3,3,2$, and $2\to 3,3,2,3,2$. The first sequences are
        \begin{align*}
        d_1 &= 2,2,2,2    &  \\
        d_2 &= 3,3,2,3,2, \dots  & (\text{4 times})    \\
        d_3 &= 3,3,2,3,3,2,3,3,2,3,2,3,3,2,3,3,2,3,2,  \dots & (\text{4 times})     
    %    d_4 &=  \dots & (\text{5 times})
        \end{align*}      

 \vspace{.5cm}   
        \item[E)] For $\{p,q\}=\{6,3\}$, the substitution rules are: $3$ is erased; if a $2$ is preceded by a $3$, then $2 \to 3,2$; and if a $2$ is preceded by a $2$, then $2 \to 3,2,2$. The first sequences are
        \begin{align*}
        d_1 &= 2,2,2,2,2,2     &  \\
        d_2 &= 3,2,2, \dots  & (\text{6 times})    \\
        d_3 &= 3,2,2,3,2, \dots & (\text{6 times})     \\
        d_4 &= 3,2,3,2,2,3,2,2, \dots & (\text{6 times})
        \end{align*}
        It is easy to see that, for $k\geq 2$, $d_k$ consists of the sequence $3,2,\underbrace{3,2,2,3,2,2,\dots, 3,2,2}_{3,2,2 \text{ repeated }  k-2 \text{ times}} $, repeated 6 times.
        
           \begin{figure}[h]   
    \centering
   \includegraphics[scale=4]{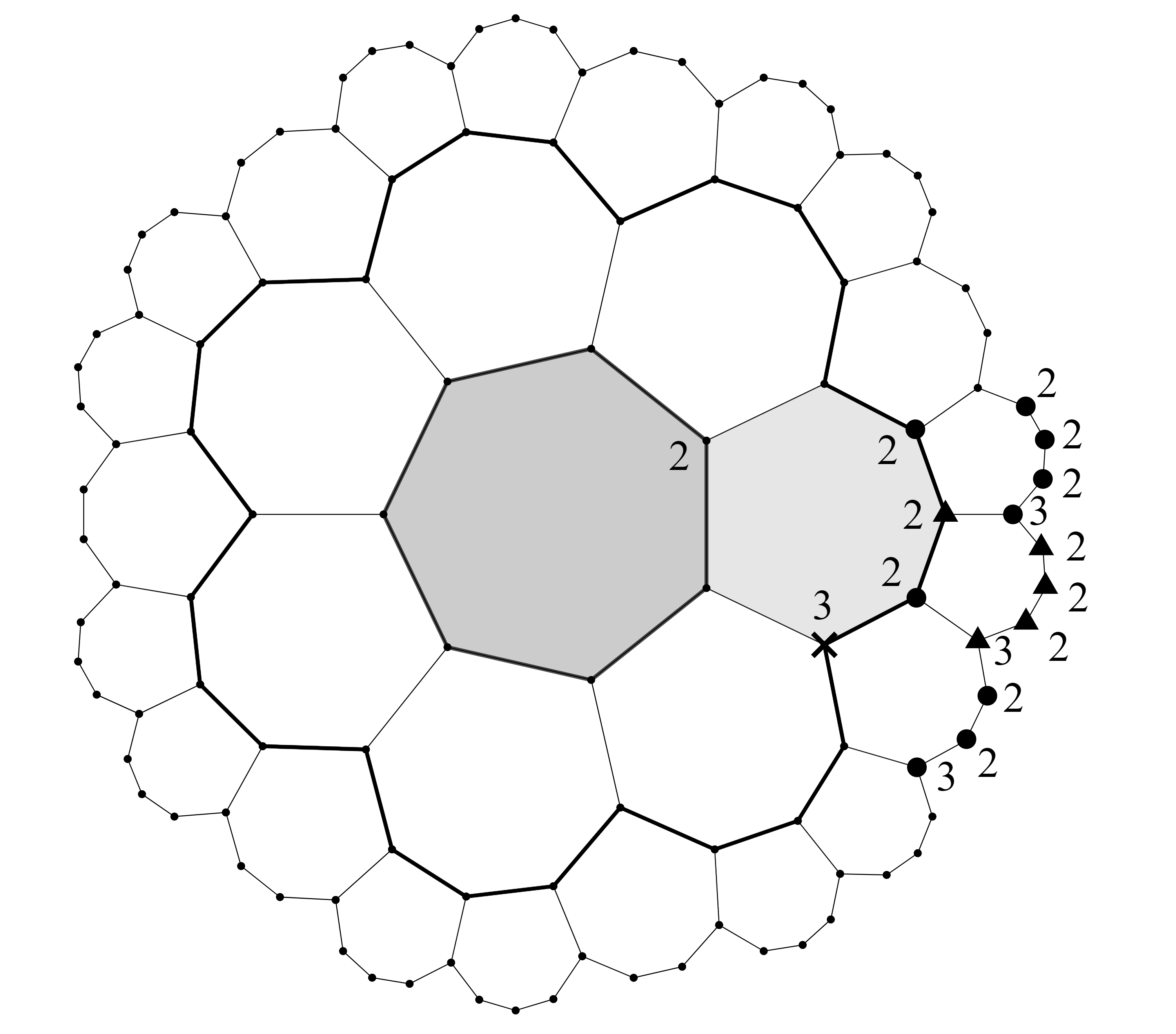}
    \caption{Graphic representation of the substitution rules for $\{p,q\}= \{7,3\}$. From $A_{7,3}(2)$ to $A_{7,3}(3)$, the 3 is erased (represented by $\times$), and  $2 \to 3,2,2,2$. We alternate the symbols {\scriptsize $\blacktriangle$} and $\bullet$ to emphasize the substitution rules for each vertex.}  \label{fig:A3-7-3-substitution} 
    \end{figure}
 \vspace{.5cm}   
    \item[F)] For $\{p,q\}=\{7,3\}$, the substitution rules are: $3$ is erased; if a $2$ is preceded by a $3$, then $2 \to 3,2,2$; and if a $2$ is preceded by a $2$, then $2 \to 3,2,2,2$. The first sequences are
        \begin{align*}
        d_1 &= 2,2,2,2,2,2,2     &  \\
        d_2 &= 3,2,2,2, \dots  & (\text{7 times})    \\
        d_3 &= 3,2,2,3,2,2,2,3,2,2,2  \dots & (\text{7 times})            \\
        d_4 &= 3,2,2,3,2,2,2,3,2,2,3,2,2,2,3,2,2,2,3,2,2,3,2,2,2,3,2,2,2, 
   \dots & (\text{7 times})     \end{align*}
 
    \end{itemize}
\end{ex}

%\newpage
\section{$\{p,q\}$-spiral animals}

In this section, we turn our attention to $\{p,q\}$-spiral animals, counting their graph parameters, and proving Theorem \ref{thm:e_1spiral}.% and \ref{thm:extremalspiral}.

The $\{p,q\}$-spiral with $n$ tiles will be denoted by $S_{p,q}(n)$. Recall that we refer to its graph parameters as $\espir(n), \vspir(n), \espir_1(n)$, etc. The following procedure for constructing $S_{p,q}(n)$ gives the spiral in the counter-clockwise direction, as seen in Figure \ref{fig:S37-4-5}.

\begin{defn} \label{defn:spiral}
Let $n$ be a positive integer. Then $S_{p,q}(n)$ is constructed by adding tiles $T_1, T_2,\ldots, T_n$ one at a time such that $T_n$ is the unique tile added to $S_{p,q}(n-1)$ to obtain $S_{p,q}(n)$. If $n=\nk(k)$, then $S_{p,q}(n)=A_{p,q}(k)$. Otherwise, for $\nk(k) < n < \nk(k+1)$, first $T_{\nk(k)+1}$ is attached to $A_{p,q}(k)$ by gluing it along the edge $\{x_{k,1}, x_{k,\ek_1(k)}\}$. Then subsequent tiles $T_i$ are added by gluing along the perimeter edge of $T_{i-1}$ which contains $x_{k,1}$, until $x_{k,1}$ is saturated. Next, we move on to the vertex $x_{k,2}$, and we repeat this process until $n-\nk(k)$ tiles have been added to $A_{p,q}(k)$.
\end{defn}

For the rest of this section, we always assume that $\nk(k) < n \le \nk(k+1)$, and thus $T_n$ is a tile being added in the $(k+1)$-th layer. Similar to $A_{p,q}(k)$, we use the gluing parameter of each subsequent tile to chart the jumps in the sequences $\espir_1(n)$ and $\espir_2(n)$. Here, $T_n$ is added to $S_{p,q}(n-1)$, and so we say $T_n$ is $\epsilon$-glued if $\deg_{S_{p,q}(n-1)}(T_n)=\epsilon$, with $\epsilon\ge 1$ since $T_n$ always shares an edge with $T_{n-1}$. 

\begin{rmk}\label{rmk:e_2bounds}
If $T_n$ is $\epsilon$-glued to $S_{p,q}(n-1)$, then \begin{align*}
\espir_1(n)-\espir_1(n-1) &= p-2 \epsilon,\\ \espir_2(n)-\espir_2(n-1)&= \epsilon.
\end{align*}
\end{rmk}

This is a simple observation, as when $T_n$ is $\epsilon$-glued it adds $p-\epsilon$ perimeter edges while covering $\epsilon$ edges of $S_{p,q}(n-1)$, and each covered edge is added to the count for $e_2$.

%Observe  

%These observations, combined with the maximum possible value of $\epsilon$ found in Proposition \ref{prop:recurrence} give us the following lemma.

%The previous definition gives us an enumeration $T_n$ of the tiles of the $\{p,q\}$-tessellation, which allows us to describe the jumps in the sequences $e_1(S_{p,q}(n))$ and $e_2(S_{p,q}(n))$. With respect to the Definition $\ref{defn:tile-types}$, observe that $T_{\nk(k)+1}$ is type 2 for $q\geq 4$ and type 3 for $q=3$, and it is attached to $A_{p,q}(k)$ along 1 and 2 edges, respectively. For any other value of $n$, if $T_n$ is type $\epsilon$, then it shares $\epsilon-1$ edges with $A_{p,q}(k)$ and is also glued along an edge to $T_{n-1}$, sharing a total of $\epsilon$ edges with $S_{p,q}(n-1)$. Thus $T_n$ adds $p-\epsilon$ perimeter edges while covering $\epsilon$ edges, so the net gain on the perimeter is $p-2\epsilon$. These observations, combined with the maximum possible value of $\epsilon$ found in Proposition \ref{prop:recurrence} give us the following lemma.

%\begin{lem} \label{lem:e_2bounds}

With $T_n$ in the $(k+1)$-th layer, note that $\deg_{S_{p,q}(n-1)}(T_n)$ is also easily determined from $\deg_{A_{p,q}(k)}(T_n)$, differing by either 0, 1, or 2 additional edges. The first tile $T_{\nk(k)+1}$ has no additional shared edges with other tiles in the $(k+1)$-th layer, and is 1-glued for $q\geq 4$ and 2-glued for $q=3$ (the latter for $k\geq2$). Meanwhile, the last tile $T_{\nk(k+1)}$ has two additional shared edges; one with $T_{\nk(k+1)-1}$ and one with the first tile in this layer, $T_{\nk(k)+1}$. Thus, it is 2-glued for $q\geq 4$ or 3-glued for $q=3$. For any other value of $n$, $T_n$ only shares one additional edge with $T_{n-1}$, and so $\deg_{S_{p,q}(n-1)}(T_n)=\deg_{A_{p,q}(k)}(T_n)+1$. Combining this with the analysis in Proposition \ref{prop:recurrence} and Remarks \ref{rmk:vdeg} and \ref{rmk:e-gluing}, we can then determine the exact values of these jumps.

\begin{lem}
When $q > 3$ and for $n > 1$,
    \begin{align*}
        &\espir_1(n)-\espir_1(n-1) = p-2  \text{ or } p-4,\\
        &\espir_2(n)-\espir_2(n-1) = 1  \text{ or 2, respectively}.
    \end{align*}
When $q=3$ and for $n > 2$,
    \begin{align*}
        &\espir_1(n)-\espir_1(n-1) = p-4 \text{ or } p-6,\\
        &\espir_2(n)-\espir_2(n-1) = 2 \text{ or 3, respectively}.\\
    \end{align*}
\end{lem}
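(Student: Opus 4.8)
The plan is to reduce the statement to determining the gluing number $\epsilon:=\deg_{S_{p,q}(n-1)}(T_n)$ of the newly added tile, and then to read that number off from the layer structure of $A_{p,q}(k)$. By Remark~\ref{rmk:e_2bounds}, an $\epsilon$-glued tile changes $\espir_1$ by $p-2\epsilon$ and $\espir_2$ by $\epsilon$; so the whole lemma, including the ``respectively'' clauses, is equivalent to showing that $\epsilon\in\{1,2\}$ whenever $q\ge 4$ and $n>1$, and $\epsilon\in\{2,3\}$ whenever $q=3$ and $n>2$ (the pairing being forced, since $\epsilon=1$ gives $(p-2,1)$, $\epsilon=2$ gives $(p-4,2)$, and $\epsilon=3$ gives $(p-6,3)$).

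Fix the layer $k$ containing $T_n$, so that $\nk(k)<n\le\nk(k+1)$, and use the decomposition from the analysis just before the statement: $\epsilon=\deg_{A_{p,q}(k)}(T_n)+j$, where $j=0$ exactly when $T_n=T_{\nk(k)+1}$ is the first tile of the layer, $j=2$ exactly when $T_n=T_{\nk(k+1)}$ is the last tile of the layer (glued both to $T_{\nk(k+1)-1}$ and to $T_{\nk(k)+1}$, closing the annular layer), and $j=1$ for every other $n$. Since Remark~\ref{rmk:e-gluing} gives $\deg_{A_{p,q}(k)}(T_n)\in\{0,1\}$ when $q\ge4$ and $\deg_{A_{p,q}(k)}(T_n)\in\{1,2\}$ when $q=3$, the intermediate tiles are handled at once: $\epsilon=\deg_{A_{p,q}(k)}(T_n)+1$ lies in $\{1,2\}$ for $q\ge4$ and in $\{2,3\}$ for $q=3$.

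For the first and last tiles I would fall back on the geometric description in that same discussion. The first tile $T_{\nk(k)+1}$ meets $A_{p,q}(k)$ only along $\{x_{k,1},x_{k,\ek_1(k)}\}$ when $q\ge4$ --- higher gluing to $A_{p,q}(k)$ is impossible because Remark~\ref{rmk:vdeg} says every perimeter vertex of $A_{p,q}(k)$ has degree below $q$ in that range --- so $\epsilon=1$; and when $q=3$, $k\ge2$, the vertex $x_{k,1}$ has degree $3$, the first tile fills its notch, and $\epsilon=2$. The last tile $T_{\nk(k+1)}$ is, as one checks by following the spiral while it saturates $x_{k,1},x_{k,2},\dots$ in order, $0$-glued to $A_{p,q}(k)$ when $q\ge4$ and $1$-glued when $q=3$ (the perimeter edges of $A_{p,q}(k)$ around the closing gap are already covered by the time the spiral arrives there), whence $\epsilon=2$, resp.\ $\epsilon=3$. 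Finally I would confirm that the hypotheses are sharp: for $q\ge4$ the only tile not treated above is $T_1$; for $q=3$ the first tile of the second layer is $T_2$ (since $\nk(1)=1$), and since $A_{p,q}(1)$ has no degree-$3$ vertex that tile is only $1$-glued, giving the out-of-range jump $(p-2,1)$ --- so the condition $n>2$ excludes exactly this tile.

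The one step I expect to cost real work is the last tile of a layer: verifying uniformly, for every admissible $\{p,q\}$, that as the spiral closes up its final tile acquires exactly two extra shared edges from its own layer and at most one shared edge with $A_{p,q}(k)$. This is where the ``wrapping around'' geometry has to be argued from the construction and from the perimeter-degree sequences $d_{k,i}$ rather than just asserted, and it is also the origin of the asymmetry between the $q=3$ and $q\ge4$ regimes; everything else is bookkeeping resting on Remarks~\ref{rmk:e_2bounds}, \ref{rmk:e-gluing}, and~\ref{rmk:vdeg}.
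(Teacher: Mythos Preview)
Your proposal is correct and follows exactly the route the paper takes: reduce via Remark~\ref{rmk:e_2bounds} to the gluing parameter $\epsilon$, then compute $\epsilon=\deg_{A_{p,q}(k)}(T_n)+j$ with $j\in\{0,1,2\}$ for the first, intermediate, and last tile of a layer, invoking Remarks~\ref{rmk:vdeg} and~\ref{rmk:e-gluing} for $\deg_{A_{p,q}(k)}(T_n)$. The paper in fact states the lemma without a separate proof, treating it as an immediate consequence of the preceding paragraph; your write-up simply fleshes out that discussion, and the step you flag---the last tile of a layer---is handled in the paper at the same level of informality you give it.
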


\begin{cor} \label{cor:e_2bounds}
For $n \ge 1$ and $l\ge 1$, we have the bounds 
    \begin{align*}
        &\espir_2(n+l)-\espir_2(n)\leq 2l \text{ when } q>3,\\
        &\espir_2(n+l)-\espir_2(n)\leq 3l \text{ when } q=3.\\
    \end{align*}
    %$e_2(S_{p,q}(n+l))-e_2(S_{p,q}(n))\leq 2l$ when $q>3$, and  $e_2(S_{p,q}(n+l))-e_2(S_{p,q}(n))\leq 3l$ when $q=3$.
\end{cor}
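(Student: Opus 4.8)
The plan is to obtain the bound by telescoping the single-step increments that the preceding Lemma already controls. Writing
$$\espir_2(n+l)-\espir_2(n)=\sum_{j=n+1}^{n+l}\bigl(\espir_2(j)-\espir_2(j-1)\bigr),$$
we see that $\espir_2(n+l)-\espir_2(n)$ is a sum of exactly $l$ consecutive increments, each having index $j\ge n+1\ge 2$ since $n\ge 1$.

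For $q>3$, the Lemma applies to every $n>1$, hence to every index $j\ge 2$ appearing in the sum, and gives $\espir_2(j)-\espir_2(j-1)\in\{1,2\}$; thus each of the $l$ summands is at most $2$, and $\espir_2(n+l)-\espir_2(n)\le 2l$. For $q=3$, the Lemma gives $\espir_2(j)-\espir_2(j-1)\in\{2,3\}$ only for $j>2$, so the single index $j=2$ (which occurs exactly when $n=1$) is not covered by it; I would dispose of this case by hand, noting that $S_{p,q}(1)$ is a single tile with $\espir_2(1)=0$ while $S_{p,q}(2)$ consists of two tiles sharing exactly one edge, so $\espir_2(2)-\espir_2(1)=1\le 3$. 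With this, every summand is at most $3$ and $\espir_2(n+l)-\espir_2(n)\le 3l$.

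The content here is entirely bookkeeping: all of the geometric input is already packaged in the Lemma (and, upstream, in Remark \ref{rmk:e_2bounds} together with the $\epsilon$-gluing analysis following Definition \ref{defn:spiral}). The only mild wrinkle --- the ``main obstacle'', such as it is --- is that the Lemma's hypotheses exclude the very first increment of the spiral in the $q=3$ case, so one must verify separately that this first jump still respects the per-step bound; this is immediate because $T_2$ can only be $1$-glued to $T_1$. No genuine induction is needed, although one could equivalently phrase the telescoping as an induction on $l$.
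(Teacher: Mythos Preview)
Your proposal is correct and is exactly the argument the paper intends: the corollary is stated without proof precisely because it follows from the preceding Lemma by telescoping the single-step increments. Your handling of the $j=2$ case when $q=3$ is a careful touch the paper omits, and it is indeed necessary since the Lemma's hypothesis is $n>2$ there; your observation that $T_2$ is $1$-glued to $T_1$ (so $\espir_2(2)-\espir_2(1)=1$) disposes of it.
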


\begin{proof}[Proof of Theorem \ref{thm:e_1spiral}]
Recall that $\espir_1(n)$ is the number of perimeter edges of $S_{p,q}(n)$, and $m$ is the number of perimeter vertices of $A_{p,q}(k)$ which are saturated in $S_{p,q}(n)$. 

When $T_n$ is $\epsilon$-glued, it saturates $\epsilon-1$ perimeter vertices of $A_{p,q}(k)$. So an $\epsilon$-glued tile increments $m$ by $\epsilon-1$, and $e_1$ by $p-2\epsilon=(p-2)-2$(increment of $m$). Then, for any $q$, we get
    $$\espir_1(n)= (p-2)(n-\nk(k)) + \ek_1(k) -2m,$$

which gives the expression in Theorem \ref{thm:e_1spiral}. Similarly, we have
    \begin{align*}
    \espir_2(n)&= \ek_2(k) + n-\nk(k)+m,     \\
    \espir(n) &= (p-1)(n-\nk(k))+\ek(k)-m,    \\
    \vspir(n)  &=(p-2)(n-\nk(k))+\vk(k)-m.    
    \end{align*}    
    
Now, we compute $m$ with the help of the sequence $d_{k,i}$. The vertex $x_{k,1}$ has degree $d_{k,1}$ in $A_{p,q}(k)$, hence it needs $q-d_{k,i}+1$ tiles to become saturated. So, if $n-\nk(k) \leq q-d_{k,1}$ then $m=0$. Now, for $i>1$, the vertex $x_{k,i}$ has degree $d_{k,i}$ in $A_{p,q}(k)$, but in the spiral there is already a tile attached to both $x_{k,i-1}$ and $x_{k,i}$; so $x_{k,i}$ needs $q-d_{k,i}$ tiles to become saturated. This analysis, together with the fact that $n-\nk(k)$ must be between the number of tiles needed to saturate $m$ vertices and the number of tiles needed to saturate $m+1$ vertices, gives us the inequality,
$$ 1+ \sum_{i=1}^m (q-d_{k,i}) \leq n-\nk(k) < 1+ \sum_{i=1}^{m+1} (q-d_{k,i}).$$

\end{proof}

We now turn our attention to the properties of the dual graph of $S_{p,q}(n)$. Recall that $n'$ is the number of faces in the dual graph, and we use $T_{n'}$ and $T_{n'+1}$ to denote the $n'$-th and $(n'+1)$-th tiles of the dual $\{q,p\}$-spiral. Additionally, recall that $e_{2,i}$ is the number of edges that are incident to $i$ faces in the dual graph, with $e_{2,0}$ tracking how far off $S'_{p,q}(n)$ is from being exactly a $\{q,p\}$-spiral.

\begin{lem} \label{lem:e_2,0bound}
The dual graph of $S_{p,q}(n)$ consists of the $\{q,p\}$-spiral animal $S_{q,p}(n')$ with a possibly trivial path of $e_{2,0}$-edges attached. Moreover, if $T_{n'+1}$ is $\epsilon$-glued to $S_{q,p}(n')$, then $\espir_{2,0}(n)\leq q-2-\epsilon$.
\end{lem}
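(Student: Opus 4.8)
The plan is to prove, by induction on $n$, a strengthened statement. Write $n'=v_{int}(S_{p,q}(n))$, let $k$ be as in the hypothesis, and let $y:=x_{k,m+1}$ be the perimeter vertex of $A_{p,q}(k)$ that the spiral is currently in the process of saturating in $S_{p,q}(n)$ (if $m=\ek_1(k)$ then $S_{p,q}(n)=A_{p,q}(k+1)$ and one simply shifts $k$). Besides the stated conclusion --- that $S'_{p,q}(n)$ is $S_{q,p}(n')$ with a (possibly empty) simple path $P$ on $\espir_{2,0}(n)$ edges attached at a single vertex --- I would carry the invariant that the edges of $P$ are exactly the adjacencies, in the dual graph, among the tiles of $S_{p,q}(n)$ incident to $y$, listed in cyclic order; in particular every $e_{2,0}$-edge of $S_{p,q}(n)$ is incident to $y$.

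Granting the invariant, the moreover-clause is short. Let $T_{a_1},\dots,T_{a_q}$ be the $q$ tiles incident to $y$ in the $\{p,q\}$-tessellation, in cyclic order; their dual vertices bound the dual face of $y$, which is precisely the tile $T_{n'+1}$ of the $\{q,p\}$-spiral. Since $y$ is not saturated in $S_{p,q}(n)$, only a proper sub-arc of $c\le q-1$ of these tiles is present, so exactly $c-1$ edges of this $q$-cycle are present. Each such edge $T_{a_j}\cap T_{a_{j+1}}$ has $y$ as one endpoint; it is a perimeter edge of $S_{q,p}(n')$ --- equivalently, one of the $\epsilon$ edges along which $T_{n'+1}$ is glued --- exactly when its other endpoint is already an interior vertex, and it is an $e_{2,0}$-edge otherwise. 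Hence those $c-1$ edges split as $\epsilon$ glued edges and $\espir_{2,0}(n)=c-1-\epsilon$ tail edges, and $c\le q-1$ gives $\espir_{2,0}(n)\le q-2-\epsilon$.

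For the inductive step, recall from the proof of Theorem \ref{thm:e_1spiral} and Remark \ref{rmk:e_2bounds} that attaching an $\epsilon$-glued tile $T_n$ to $S_{p,q}(n-1)$ adds to the dual graph exactly one vertex $v_n$ and $\epsilon$ edges, all incident to $v_n$, and completes exactly $\epsilon-1$ new dual faces, one for each perimeter vertex of $S_{p,q}(n-1)$ saturated by $T_n$. If $T_n$ completes no face (the minimal gluing), then $v_n$ is joined to the current end of $P$ by a single new edge whose inner endpoint is the not-yet-saturated vertex $y$, hence an $e_{2,0}$-edge; so $P$ grows by one edge, with $n'$ and $S_{q,p}(n')$ unchanged, and the invariant persists. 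If $T_n$ completes a face, that face is the dual face of the vertex $T_n$ saturates; it is the $q$-cycle through all current edges of $P$, the new edges at $v_n$, and exactly $q-2-\espir_{2,0}(n-1)$ perimeter edges of the current $S_{q,p}$, which is precisely the way the $\{q,p\}$-spiral attaches its next tile. Thus $S_{q,p}$ gains one correctly placed tile, $n'$ increases by one, $P$ is emptied, and the spiral proceeds to the next perimeter vertex of $A_{p,q}(k)$ (if $q=3$ and $T_n$ is $3$-glued, two faces are completed and the same local analysis applies twice). The base case $n=1$ is immediate.

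The main obstacle is the geometric bookkeeping that establishes the invariant: one must verify, using Remarks \ref{rmk:vdeg} and \ref{rmk:e-gluing} and the substitution rules for the degree sequences $d_{k,i}$ from Section \ref{section:layers}, that while $y$ is being saturated the newly attached tiles always occupy a consecutive sub-arc of $y$'s $q$ tiles, that the edge joining the first of them to the last tile of the previous batch is again incident to $y$, and that the tile that finally saturates $y$ is $2$-glued (or $3$-glued when $q=3$) and closes a single $q$-gon swallowing all of $P$. This also shows that $P$ grows in a self-limiting way: once $\espir_{2,0}$ reaches $q-2-\epsilon$ the next tile attached is forced to complete $T_{n'+1}$.
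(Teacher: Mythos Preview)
Your proposal is correct and follows essentially the same line as the paper: both argue inductively that as tiles are added to $S_{p,q}(n)$, minimally glued tiles extend a dangling path in the dual while maximally glued tiles close off the next dual $q$-gon, so that the dual is always $S_{q,p}(n')$ plus a tail. Your derivation of the bound via $e_{2,0}(n)=c-1-\epsilon$ with $c\le q-1$ is a slightly sharper packaging than the paper's count (which instead observes that $\epsilon$ gluing edges plus the $2$ edges contributed simultaneously by the saturating tile cannot lie in the tail, leaving at most $q-\epsilon-2$), but the underlying geometry is identical.
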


\begin{proof}
Recall that dual faces, i.e., tiles of the dual animal, are in 1-to-1 correspondence with interior vertices of the original. Then by Definition \ref{defn:spiral}, we want to show that if $x_{k,i}$ is an interior vertex that corresponds to tile $T_{n'}$ in the dual, then vertex $x_{k,i+1}$ (or $x_{k+1,1}$ when $i=\ek_1(k)$) corresponds to the tile $T_{n'+1}$ in the dual. To briefly avoid confusion caused by the index, we write $x_{*}$ to denote whichever of $x_{k,i+1}$ or $x_{k+1,1}$ is the next vertex.

For $n \le q$, this is trivially true, and covering the vertex $x_{1,1}$ of $T_1$ gives $S'_{p,q}(q)=S_{q,p}(1)$. For larger $n$, this follows from Definition \ref{defn:spiral}, since tiles are added in the same direction as the increments in the vertex labels. For instance, it is clear that the tiles represented by $x_{k,i}$ and $x_{*}$ will be adjacent, and the tile $T$ of $A_{p,q}(k)$ containing the edge $\{x_{k,i},x_{*}\}$ corresponds to the next open perimeter vertex in the dual, to which $T_{n'+1}$ is being glued. Furthermore, when $q > 3$, 1-glued tiles correspond to adding a path of $e_{2,0}$-edges in the dual. This path gets closed at the next 2-glued tile, which saturates $x_{*}$ and completes $T_{n'+1}$.

The bound follows from counting the extant edges of $T_{n'+1}$ in the dual. There are $\epsilon$ edges already present in $S_{q,p}(n')$, where it is going to be attached. The vertex representing the 2-glued tile in the original $\{p,q\}$-spiral completes the tile $T_{n'+1}$, adding two edges simultaneously. So these $\epsilon+2$ edges cannot be in the path tracing out the $q$-gon $T_{n'+1}$. 
\end{proof}

\section{Proof of Theorem \ref{thm:extremalspiral}}
We now work towards proving Theorem \ref{thm:extremalspiral} and the extremality of the graph properties of $S_{p,q}(n)$. Recall that an animal $A$ with $n$ tiles is said to be \textit{extremal} if $e_2(A)$ and $v_{int}(A)$ are maximal, and $e(A), e_1(A) $ or $v(A)$ are all minimal in the set of animals with $n$ tiles.

%Recall that an animal $A$ with $n$ tiles is said to be \textit{extremal} if either $e_2(A)$ or $v_{int}(A)$ is maximal, or either $e(A), e_1(A) $ or $v(A)$ is minimal, in the set of animals with $n$ tiles. 

First, we show the maximality of $\espir_2(n)$ within the class of $\{p,q\}$-graphs with $n$ faces. This requires the following lemma, which asserts that if this holds for the dual structure, then it can be lifted to the original graph.

\begin{lem}\label{lem:e2_dual_lift}
Let $G$ be a $\{p,q\}$-graph with $n$ faces and $v_{int}$ interior vertices. If $e_{2}\left(G'\right)\leq e_2\left(S_{q,p}\left(v_{int}\right)\right)$, then $e_2(G)\le e_2(n)$.
\end{lem}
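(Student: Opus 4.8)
The plan is to reduce the statement, via Euler's formula, to the single inequality $v_{int}(G)\le v_{int}(S_{p,q}(n))$, and then to prove that inequality by a monotonicity argument on the dual spiral. First I would recall that the identity $n-e_2+v_{int}=c'$ used to derive \eqref{eqn:interior-edges} gives $e_2(G)=n+v_{int}(G)-c'(G)$ for every $\{p,q\}$-graph, while $e_2(n)=e_2(S_{p,q}(n))=n+v_{int}(S_{p,q}(n))-1$ because $S_{p,q}(n)$ is connected. Since components of $G$ with no face drop out of $G'$, we always have $c'(G)\ge 1$ (and if $G$ has no face at all, $e_2(G)=0$ and the claim is trivial), so it suffices to prove $m\le n'$, where $m:=v_{int}(G)$ and $n':=v_{int}(S_{p,q}(n))$.

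To bound $m$, I would introduce the auxiliary quantity $g(j):=(q-1)\,j-e_2(S_{q,p}(j))$ for $j\ge 0$ and check that $g$ is non-decreasing: by Remark~\ref{rmk:e_2bounds} applied to the $\{q,p\}$-spiral, $e_2(S_{q,p}(j+1))-e_2(S_{q,p}(j))$ equals the gluing number of the $(j+1)$-st dual tile, which by the increment lemma immediately preceding Corollary~\ref{cor:e_2bounds} is at most $2$ when $p>3$ and at most $3$ when $p=3$; since $q\ge 3$ in the first case and $q\ge 6$ in the second, each increment of $g$ is $\ge 0$. Next I would combine the handshake relation $q\,v_{int}=e_2-e_{2,0}+e_{2,2}$ with $e_2=n+v_{int}-c'$, both for $G$, to obtain $e_{2,2}(G)=(q-1)m-n+c'(G)+e_{2,0}(G)$; since $e_{2,2}(G)=e_2(G')$, the hypothesis then gives $e_2(S_{q,p}(m))\ge (q-1)m-n+c'(G)+e_{2,0}(G)$, i.e. $g(m)\le n-c'(G)-e_{2,0}(G)\le n-1$.

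For the matching bound at $n'+1$, I would invoke Lemma~\ref{lem:e_2,0bound}, which writes $S'_{p,q}(n)=S_{q,p}(n')\cup P$ with $P$ a path carrying no faces; hence $e_{2,2}(S_{p,q}(n))=e_2(S_{q,p}(n'))$, and inserting this into the same identity $e_{2,2}=(q-1)v_{int}-n+c'+e_{2,0}$ for the connected animal $S_{p,q}(n)$ yields $g(n')=n-1-e_{2,0}(S_{p,q}(n))$. If the next dual tile $T_{n'+1}$ is $\epsilon$-glued to $S_{q,p}(n')$, then $g(n'+1)=g(n')+(q-1)-\epsilon$ by Remark~\ref{rmk:e_2bounds}, while Lemma~\ref{lem:e_2,0bound} bounds $e_{2,0}(S_{p,q}(n))\le q-2-\epsilon$; therefore $g(n'+1)\ge (n-1)-(q-2-\epsilon)+(q-1)-\epsilon=n>n-1\ge g(m)$. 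Monotonicity of $g$ then forces $m\le n'$, and the reduction finishes the proof: $e_2(G)=n+m-c'(G)\le n+n'-1=e_2(n)$.

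The part I expect to be most delicate is ensuring that these two estimates are sharp enough: the inequality $g(n'+1)\ge n$ works only because $e_{2,0}(S_{p,q}(n))\le q-2-\epsilon$ from Lemma~\ref{lem:e_2,0bound} exactly offsets the $(q-1)-\epsilon$ increment, and the monotonicity of $g$ is tight — equality $g(j+1)=g(j)$ can occur when $q=3$ — so one must use the sharp per-tile increment bounds for the $\{q,p\}$-spiral rather than the crude bound $\epsilon\le q-1$. One should also handle separately the degenerate cases $n\le 1$, $v_{int}(G)=0$ (where $e_2(G)\le n-1$ is immediate) and $n'=0$ (so that $n\le q-1$), where some of the spiral lemmas carry base-case caveats but the inequality $e_2(G)\le e_2(n)$ is easy to verify by hand.
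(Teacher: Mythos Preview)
Your proof is correct and follows essentially the same strategy as the paper's: both define a monotone auxiliary function built from $e_2(S_{q,p}(\cdot))$, bound one end via the hypothesis and the other via Lemma~\ref{lem:e_2,0bound}, and conclude by monotonicity. Your reparametrization in terms of $v_{int}$ rather than $e_2$ (the two are related by Euler's formula on $G'$) is a cosmetic change of variable that makes the $c'>1$ case fall out immediately, whereas the paper instead invokes Corollary~\ref{cor:e_2bounds} to reduce to $c'=1$.
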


\begin{proof}
Recall that $e_{2,2}(G)=e_2\left(G'\right)$, which by assumption is at most $e_2\left(S_{q,p}\left(v_{int}\right)\right)$. Then, we can plug this into Equation (\ref{eqn:interior-edges}), 
    \begin{align*}
        e_2(G) &= \frac{q\left(n-c'\right) -e_{2,0}(G) +e_{2,2}(G)}{q-1},   \\
                & \leq \frac{q\left(n-c'\right) + e_{2,2}(G)}{q-1}\\
                & \leq \frac{q\left(n-c'\right) + e_{2}\left(S_{q,p}\left(v_{int}\right)\right)}{q-1}.
    \end{align*}
    
To avoid a cacophony of indices and parentheses, we abbreviate our notation when referring to the dual $\{q,p\}$-spiral and write $\edual_2(i)=e_2\left(S_{q,p}(i)\right)$. Then applying Euler's identity for $G'$ to replace $v_{int}$, we have
    $$    e_2(G)  \leq \frac{q\left(n-c'\right) + \edual_2\left(v_{int}\right)}{q-1} = \frac{q\left(n-c'\right) + \edual_2\left(e_2(G)-n+c'\right)}{q-1}.$$
We can reduce this to the case $c'=1$ by using Corollary \ref{cor:e_2bounds} to obtain 
%We want to reduce this inequality to the case $c'=1$. From Corollary \ref{cor:e_2bounds}, we obtain 
$$\edual_2\left(e_2(G)-n+c' \right)=\edual_2\left(e_2(G)-n+1+c'-1\right) \leq \edual_2\left(e_2(G)-n+1\right)+3\left(c'-1\right).$$
And similarly adding and subtracting 1 to $n-c'$ in the first term, we get 
    \begin{equation}\label{eq:defn_gn}
    \begin{split}
    e_2(G) & \leq  \frac{q\left(n-c'\right) + \edual_2\left(e_2(G)-n+c'\right)}{q-1}, \\
            & \leq  \frac{q(n-1) - q\left(c'-1\right) + \edual_2\left(e_2(G)-n+1\right)+ 3\left(c'-1\right)}{q-1}, \\
            &\leq \frac{q(n-1) + \edual_2\left(e_2(G)-n+1\right)}{q-1}. 
    \end{split}        
    \end{equation}
The last inequality follows from the fact that $q\ge3$, so the coefficient of $\left(c'-1\right)$ is at most 0.
%Where, in the last line, we got rid of the non-positive term $3(c'-1)-q(c'-1)$.

Now we define the sequence $$g_n(j)=\edual_2(j-n+1) -(q-1)j +q(n-1),$$ noting that a simple rearrangement of the inequality in Equation (\ref{eq:defn_gn}) yields $g_n(e_2(G))\geq 0$. To prove $e_2(G)\leq \espir_2(n)$, we will show that $g_n$ is non-increasing, and $j=\espir_2(n)$ is the last value where it is non-negative.

That $g_n$ is non-increasing is primarily a consequence of Corollary \ref{cor:e_2bounds} applied to the $\{q,p\}$-spiral $S_{q,p}(j-n+1)$. Using $\noep$ as the multiplicative factor, which here depends on $p$ instead of $q$,
\begin{align*}
        g_n(j+1) &= \edual_{2}(j-n+1+1) -(q-1)(j+1) +q(n-1),     \\
                & \leq \edual_{2}(j-n+1)+\noep - (q-1)j -(q-1) + q(n-1), \\
                & = g_n(j)-q+\noep+1. 
\end{align*}
Recall that $\noep= 2$ if $p>3$ and $\noep=3$ if $p=3$. In the first case, $q\geq 3$, and in the second $q\geq 6$, so we always get $g_n(j+1)\leq g_n(j)$.

Then $g_n(e_2(n))=g_n\left(e_2\left(S_{p,q}(n)\right)\right)\ge0$ follows from $g_n\left(e_2(G)\right)\geq 0$ for any $\{p,q\}$-graph $G$ with $n$ faces, and it only remains to show that $g_n\left(\espir_2(n)+1\right) < 0$. Recall that the dual of $S_{p,q}(n)$ is connected by definition, and is a $\{q,p\}$-spiral with $n'$ faces and possibly some additional edges. Hence $c'=1$, and by Euler's formula the number of faces in the dual is $n'=1-n+\espir_2(n)$, with $\edual\left(n'\right)=\espir_{2}(n)$. Now, we can apply Remark \ref{rmk:e_2bounds} to $\edual_{2}\left(1-n+\espir_2(n)+1\right) = \edual_{2}\left(n'+1\right)$. For $\epsilon$ the gluing parameter of the tile $T_{n'+1}$ in the $\{q,p\}$-spiral, we get that

%Then, since $g_n\left(e_2(G)\right)\geq 0$ for any $\{p,q\}$-graph $G$ with $n$ faces, this holds for $G=S_{p,q}(n)$, and it only remains to show that $g_n\left(\espir_2(n)+1\right) < 0$. Recall that the dual of $S_{p,q}(n)$ is connected by definition, and is a $\{q,p\}$-spiral with $n'$ faces and possibly some additional edges. That is, $c'=1$, and by Euler's formula the number of faces in the dual is $n'=1-n+\espir_2(n)$, with $\edual\left(n'\right)=\espir_{2}(n)$. Now, we can apply Remark \ref{rmk:e_2bounds} to $\edual_{2}\left(1-n+\espir_2(n)+1\right) = \edual_{2}\left(n'+1\right)$. For $\epsilon$ the gluing parameter of the tile $T_{n'+1}$ in the $\{q,p\}$-spiral, we get that
\begin{align*}
            g_n\left(\espir_2(n)+1\right) &= \edual_{2}\left(n'+1\right) -(q-1)\left(\espir_2(n)+1\right) +q (n-1),   \\
                    &= \edual_{2}\left(n' \right) +\epsilon -(q-1)\espir_2(n) -(q-1) +q (n-1).  
\end{align*}
And Equation (\ref{eqn:interior-edges}) for $G=S_{p,q}(n)$ yields that
    \begin{align*}
    \espir_{2,0}(n) &= \espir_{2,2}(n)-(q-1)\espir_2(n)+q(n-1),\\
    &= \edual_{2}\left(n' \right) -(q-1)\espir_2(n) +q (n-1).    
    \end{align*}
So we have
    \begin{align*}
            g_n\left(\espir_2(n)+1\right) =\espir_{2,0}(n)+\epsilon - (q-1).
    \end{align*}    
Finally, Lemma \ref{lem:e_2,0bound} tell us that $\espir_{2,0}(n)\leq q-2-\epsilon$, and hence $g_n\left(\espir_2(n)+1\right)\leq -1$. Therefore, since $g_n\left(e_2(G)\right)\ge 0$, it must be that $e_2(G)\le\espir_2(n)$.
\end{proof}

We use induction and the bounds from Lemma \ref{lem:v_intbound} to show that this lifting can always be done, and $e_2(n)$ is in fact always maximal in the class of $\{p,q\}$-graphs.

\begin{lem}\label{lem:e2-pq-no-holes}
Fix a pair $\{p,q\}$ and $n\ge 0$. If $G$ is a $\{p,q\}$-graph with $n$ faces, then $e_2(G)\leq e_2(n)$.
\end{lem}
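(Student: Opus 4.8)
The plan is to prove Lemma~\ref{lem:e2-pq-no-holes} by strong induction on $n$, using the duality machinery already assembled: Equation~(\ref{eqn:interior-edges}), Lemma~\ref{lem:v_intbound}, and the lifting Lemma~\ref{lem:e2_dual_lift}. The base cases $n=0,1$ are immediate, since a $\{p,q\}$-graph with at most one face has no pair of edges incident to two faces in a nontrivial way (for $n=1$ one checks $e_2(1)=0$ or the trivial value directly). For the inductive step, suppose the bound holds for all $\{p,q\}$-graphs with fewer than $n$ faces, for \emph{every} pair $\{p,q\}$ — note the roles of $p$ and $q$ swap under dualization, so the induction hypothesis must be phrased uniformly over pairs. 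Given a $\{p,q\}$-graph $G$ with $n$ faces, I first reduce to the case where every connected component has at least one face: components that are trees contribute nothing to $e_2$ and nothing to $v_{int}$, so deleting them changes neither side of the desired inequality.

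Once every component has a face, Lemma~\ref{lem:v_intbound} applies. When $q\ge 4$ it gives $v_{int}(G)\le n-1 < n$, so the dual graph $G'$ is a $\{q,p\}$-graph with $v_{int}(G)$ faces, and by the induction hypothesis (applied with the pair $\{q,p\}$ and the smaller face count $v_{int}(G)$) we get $e_2(G') \le e_2\bigl(S_{q,p}(v_{int}(G))\bigr)$. This is exactly the hypothesis of Lemma~\ref{lem:e2_dual_lift}, which then yields $e_2(G)\le \espir_2(n) = e_2(n)$, completing the step in this case. The one remaining case is $q=3$: here Lemma~\ref{lem:v_intbound} only gives $v_{int}(G)\le 2n-2$, which need not be smaller than $n$, so the naive induction on face count breaks down.

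The main obstacle, then, is the $q=3$ case. I expect to handle it by a separate, more hands-on argument rather than by appealing to the dual. When $q=3$, the dual $\{3,p\}$-graph $G'$ has maximum vertex degree $3$, and one can argue directly about how many edges of $G$ can be incident to two faces: every interior vertex of $G$ has exactly $3$ incident faces and hence lies on exactly $3$ edges counted by $e_2$, and the handshake-type identity $3v_{int} = e_{2,1}+2e_{2,2}$ combined with Euler's formula for $G'$ pins down $e_2$ in terms of $n$, $c'$, $e_{2,0}$, and $e_{2,2}$. Since for $q=3$ the dual is a $\{3,p\}$-graph whose faces are triangles, $e_{2,2}(G)=e_2(G')$ counts shared edges among triangles of $G'$, and one can bound this elementarily (for instance $e_2(G')\le \tfrac{3}{2}n'$ by double counting, or more sharply using that $G'$ embeds appropriately), feeding back through Equation~(\ref{eqn:interior-edges}) to get the claimed bound against the spiral value. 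Alternatively — and this is probably cleaner — one observes that for $q=3$ the spiral $S_{p,q}(n)$ has the property that $\mathcal{B}_k(x)$ and $A_{p,q}(k)$ coincide (as noted in the introduction, since every face in a layer is edge-glued to the previous layer when $q=3$), so the dual of $S_{p,q}(n)$ is itself honestly a $\{3,p\}$-spiral with \emph{fewer} faces, and one can set up the induction on the quantity $n + v_{int}$ or on $e_2$ itself rather than on $n$ alone, so that dualizing strictly decreases the induction parameter even when $q=3$. I would pick whichever of these makes the bookkeeping with $e_{2,0}$ and $c'$ least painful, but the conceptual content is: reduce to components with faces, dualize, invoke the inductive bound on the dual, and lift via Lemma~\ref{lem:e2_dual_lift}, with the $q=3$ case requiring a tweaked induction parameter or a direct triangle-counting estimate.
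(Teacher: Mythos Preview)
Your overall strategy matches the paper's: induct uniformly over all pairs $\{p,q\}$, reduce to components with faces, dualize, apply the inductive bound to the $\{q,p\}$-dual, and lift via Lemma~\ref{lem:e2_dual_lift}. For $q\ge 4$ your argument is complete and is essentially what the paper does.

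The $q=3$ case, however, has a genuine gap. Neither of your proposed fixes is what the paper does, and neither is fleshed out enough to work as stated. Your first suggestion --- a crude bound like $e_2(G')\le \tfrac32 n'$ fed back through Equation~(\ref{eqn:interior-edges}) --- is too weak to recover the exact spiral value $\espir_2(n)$; you need the sharp bound on $e_{2,2}$, not just any bound. Your second suggestion (a tweaked induction parameter) is the right instinct but you have not said which parameter actually decreases. The paper's resolution is cleaner than either: when $q=3$ one necessarily has $p\ge 6$, so the dual $G'$ is a $\{3,p\}$-graph with $p\ge 6$, and the \emph{third} clause of Lemma~\ref{lem:v_intbound} --- which you never invoked --- gives $v_{int}(G') < n'/2$. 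Hence two successive dualizations take a $\{p,3\}$-graph with $n$ faces to a $\{3,p\}$-graph with at most $2n-2$ faces, and then to a $\{p,3\}$-graph with fewer than $n-1$ faces. The paper packages this as a ``zippered'' induction, proving $\mathcal I(r,3,n)$ for $n\le (r+2)/2+j$ and $\mathcal I(3,r,n)$ for $n\le r+2+2j$ simultaneously, each increment of $j$ in one case being justified by the other.

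One small slip: the dual $\{3,p\}$-graph has maximum vertex degree $p$, not $3$; it is its \emph{faces} that are triangles.
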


\begin{proof}
We proceed by induction, leveraging the dual structure and Lemma \ref{lem:e_2,0bound}. Let $\I(p,q,n)$ be the statement: $$\text{``If $G$ is a $\{p,q\}$-graph with exactly $n$ faces, then $e_2(G) \le e_2(n)$.''}$$ Our initial base case will be that $\I(p,q,n)$ holds for all $p,q$ and $n$ such that $n\leq q$. When $n < q$, there cannot be any interior vertices. So the dual graph of $G$ is either a tree, a disjoint union of trees, or the empty graph on $n$ vertices, and $e_2(G)=e(G') \leq n-1 =\espir_2(n)$. Similarly, when $n=q$, the optimal value is obtained when the dual graph is a $q$-cycle, then $e_2(G)\leq q =\espir_2(q)$.

Next, we observe that Lemma \ref{lem:e2_dual_lift} asserts that $\I(p,q,n)$ follows from $\I(q,p,v_{int})$, where we slightly abuse notation and allow $v_{int}$ to represent any viable value for $v_{int}$ given a $\{p,q\}$-graph with $n$ faces. To make this more rigorous, we combine this with the bounds in Lemma \ref{lem:v_intbound} for $v_{int}$ in terms of $n$ to get the following three facts:\\
\begin{itemize}
\item[(1)] For $p,q\ge 4$, $\I(p,q,n)$ follows from the set $\{\I(q,p,i): i\le n-1\}$.
\item[(2)] For $p=3$, $\I(3,q,n)$ follows from the set $\{\I(q,3,i): i\le n/2\}$.
\item[(3)] For $q=3$, $\I(p,3,n)$ follows from the set $\{\I(3,p,i): i\le 2n-2\}$.\\
\end{itemize}

First, we show $\mathcal{I}(p,q,n)$ holds for all $n$ in the case that both $p,q\ge4$. Fix $q\ge 4$. Then, for any $p > q$ and any $q < n \le p$, we have that $v_{int} < p$ by Lemma \ref{lem:v_intbound}. Hence $\I(q,p,v_{int})$ is satisfied by the base cases, and for any pair $p,q\ge 4$, we conclude $\I(p,q,n)$ holds for all $n\le\max(p,q)$.

Now suppose that for all pairs $p,q\ge4$, the statement $\I(p,q,i)$ holds for all $i\le \max(p,q)+j$ for some fixed integer $j\ge 0$. Then, for any fixed $p$ and $q$, let $n=\max(p,q)+j+1$. Thus $\I(p,q,n)$ follows by statement (1) and the induction assumption, and applying this to each ordered pair $(p,q)$ allows us to increment $j$ by 1 in the assumption. Therefore $\I(p,q,n)$ holds for all $n$ and all $p,q\ge 4$.

Next, fix one parameter at 3, and denote the other by $r \ge 6$. Then $\I(r,3,n)$ follows from $\I(3,r,v_{int})$ if we set $2n-2 \le r$, and hence $n\le (r+2)/2$. Conversely, for $\I(3,r,n)$ we need up to $\I(r,3,n/2)$. By the previous statement for the $q=3$ case, we know $\I(r,3,n/2)$ holds if $n/2\le (r+2)/2 \Rightarrow n\le r+2$. Hence for every $r\ge 6$, we have that $\I(r,3,n)$ holds for $n\le(r+2)/2$, and  $\I(3,r,n)$ holds for $n\le r+2$. Using these as the base cases, we zipper them together to perform the induction.

Fix any $r\ge6$, and suppose that for a fixed $j\ge 0$, $\I(r,3,i)$ holds for all $i\le(r+2)/2+j$, and additionally $\I(3,r,i)$ holds for all $i\le r+2+2j$. Let $n=(r+2)/2+j+1$. Then a graph considered in $\I(r,3,n)$ has $v_{int}\le 2n-2 = r+2+2j$. Similarly, for $n=r+2+2j+1$, a graph considered in $\I(3,r,n)$ has $v_{int}\le n/2 = (r+2)/2+j+1/2$. For $n=r+2+2j+2$, the bound is $v_{int}\le n/2 = (r+2)/2+j+1$. Hence, adding 1 to $n$ in the $q=3$ case is covered by the assumption in the $p=3$ case, and adding either 1 or 2 in the $p=3$ case is covered by the assumption in the $q=3$ case. Thus, we can increment $j$ by 1 for every $r\ge 6$ in both cases, and by induction $\I(r,3,n)$ and $\I(3,r,n)$ hold for all $r\ge 6$ and all $n\ge0$. 

\end{proof}

The maximality of $\espir_2(n)$ can immediately be churned into the full litany of extremal properties of spiral animals in the class of $\{p,q\}$-graphs. As a corollary, we obtain a proof of the statement of Theorem \ref{thm:extremalspiral} in the particular case for animals with no holes.

\begin{cor}\label{cor:extspir-no-holes}
Let $A$ be a $\{p,q\}$-animal with $n$ tiles and no holes. Then $e_2(A)\leq \espir_2(n)$, $v_{int}(A)\leq \vspir_{int}(n)$, $e_1(A)\geq \espir_1(n)$, $e(A)\geq \espir(n)$, and $v(A)\geq \vspir(n)$.
\end{cor}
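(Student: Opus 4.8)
The plan is to derive all five inequalities in Corollary \ref{cor:extspir-no-holes} from the single inequality $e_2(A)\leq \espir_2(n)$, which is exactly the content of Lemma \ref{lem:e2-pq-no-holes} (applied to $G=A$, since an $n$-tile animal with no holes is a $\{p,q\}$-graph with $n$ faces). So the substance of the work is already done, and what remains is the bookkeeping that converts a bound on interior edges into bounds on the other parameters. The key identities for a $\{p,q\}$-animal $A$ are Euler's formula $v(A)-e(A)+n=1$ (one connected component, no holes, so exactly $n$ bounded faces), the edge partition $e(A)=e_1(A)+e_2(A)$ (an animal has no $0$-glued edges), the perimeter relation $e_1(A)=$ number of perimeter vertices (the boundary is a single cycle), and the interior-vertex count $q\cdot v_{int}(A)=e_{2,1}(A)+2e_{2,2}(A)$ used already in Section 2. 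I would first establish these, being careful that the no-holes hypothesis is what guarantees a single boundary cycle and hence $e_1(A)=\#\{\text{perimeter vertices}\}$.

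The step-by-step plan: (i) Record that $e_2(A)\leq \espir_2(n)$ by Lemma \ref{lem:e2-pq-no-holes}. (ii) For $v_{int}$: since the dual graph $A'$ is connected (an animal) with $v_{int}(A)$ faces, $n$ vertices and $e_2(A)$ edges, Euler gives $v_{int}(A)=e_2(A)-n+1$; the same identity holds for $S_{p,q}(n)$ since its dual is connected, so $\vspir_{int}(n)=\espir_2(n)-n+1$, and subtracting yields $v_{int}(A)\leq \vspir_{int}(n)$ directly from (i). (iii) For $e_1$: combine $v(A)=e(A)-n+1=e_1(A)+e_2(A)-n+1$ with the handshake/degree count. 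Actually the cleanest route is to observe that all the spiral formulas in Theorem \ref{thm:e_1spiral} and the identities $\espir_2(n)=\ek_2(k)+n-\nk(k)+m$, $\espir_1(n)=(p-2)(n-\nk(k))+\ek_1(k)-2m$ derived in its proof, together with $\ek(k)=\ek_1(k)+\ek_2(k)$ and $\nk(k)=1-\vk(k)+\ek(k)$, express $\espir_1(n)$, $\espir(n)$ and $\vspir(n)$ as explicit affine functions of $\espir_2(n)$ and $n$; I would derive the analogous relations $e_1(A)=(\text{const depending on }p,q)\cdot n - (q-1)\,? \ldots$ Concretely: from $q\cdot v_{int}(A)=2e_2(A)-e_{2,1}(A)$ one gets $e_{2,1}(A)=2e_2(A)-q\,v_{int}(A)=2e_2(A)-q(e_2(A)-n+1)=qn-q-(q-2)e_2(A)$; then the total edge count via faces, $p\cdot n = e_1(A)+2e_2(A)$, gives $e_1(A)=pn-2e_2(A)$, hence $e_1(A)$ is decreasing in $e_2(A)$, so $e_1(A)\geq pn-2\espir_2(n)=\espir_1(n)$. (iv) Then $e(A)=e_1(A)+e_2(A)=pn-e_2(A)\geq pn-\espir_2(n)=\espir(n)$, and $v(A)=e(A)-n+1=(p-1)n+1-e_2(A)\geq (p-1)n+1-\espir_2(n)=\vspir(n)$. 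Each of these is monotone in $e_2(A)$, so (i) closes all of them.

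The genuinely delicate point — and the one I would flag as the main obstacle — is making sure the identity $p\cdot n=e_1(A)+2e_2(A)$ is valid, i.e. that \emph{every} edge of an $n$-tile animal with no holes lies in either exactly one or exactly two faces and never zero faces, so that the edge-face incidence count $\sum_{\text{faces}}p = \sum_{\text{edges}}(\text{incident faces})=e_1(A)+2e_2(A)$ holds with no stray $0$-glued edges. This is where the ``no holes'' and ``animal'' (connected interior, every edge bordered by a tile) hypotheses are essential — they are precisely what was used in Section 2 to say an animal with no holes is a $\{p,q\}$-graph with all vertices of degree $\geq 2$ and every edge on a face. Once that is granted, and once one checks the boundary is a single cycle so $v(A)-e(A)+n=1$ with no correction term, the rest is the linear-algebra shuffle above. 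I would therefore open with a short paragraph pinning down these structural facts (citing the discussion preceding the definition of $\{p,q\}$-graph and the proof of Lemma \ref{lem:e2-pq-no-holes}), then execute steps (i)--(iv) in half a page.
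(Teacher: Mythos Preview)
Your proposal is correct and follows essentially the same route as the paper: invoke Lemma \ref{lem:e2-pq-no-holes} for $e_2(A)\le \espir_2(n)$, then deduce the remaining four inequalities from the linear identities $e_1=pn-2e_2$, $e=pn-e_2$, $v=1-n+e$, and $v_{int}=1+e_2-n$, each of which is monotone in $e_2$. The paper's proof is exactly this, stated in two sentences; your added care in justifying $p\cdot n = e_1(A)+2e_2(A)$ and the Euler identities is fine but more than the paper spends, and the detour through $e_{2,1}$ in step (iii) is unnecessary since you recover the direct face-edge incidence count immediately after.
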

\begin{proof} 
Any $\{p,q\}$-animal $A$ with no holes can be regarded as a $\{p,q\}$-graph. Lemma \ref{lem:e2-pq-no-holes} implies that $e_2(A)\leq \espir_2(n)$, and the remaining inequalities are obtained from the relations between the graph parameters: $e=e_1+e_2=p\cdot n-e_2$; $e_1=p\cdot n - 2e_2$; $v_{int}=1+e_2-n$; and $v=1-n+e$.
\end{proof}

%\begin{prop} \label{prop:no-holes}
%The previous corollary also holds when $A$ has holes.
%\end{prop}
%\textit{Proof.} Let $s$ be the number of holes and proceed by induction on $s$. 
To prove Theorem \ref{thm:extremalspiral}, it remains to show that this corollary also holds for animals with holes. We will use induction on the number of holes to complete the proof, but first we require the following lemma stating that the number of interior edges in $S_{p,q}(n)$ plus the total number of edges in $S_{p,q}(l)$ is at least the number of interior edges in $S_{p,q}(n+l)$.  

\begin{lem}\label{lem:add-l}
For any fixed $n > 1$ and fixed $l\ge 1$, 
$$\espir_2(n)+\espir(l) \geq \espir_2(n+l).$$
\end{lem}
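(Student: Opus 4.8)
The plan is to rewrite both sides in terms of the tile-counts and gluing data, and reduce everything to bounds on how much $\espir_2$ can increase when tiles are appended. Since $S_{p,q}(n+l)$ is built from $S_{p,q}(n)$ by adding $l$ tiles, the telescoping identity $\espir_2(n+l)-\espir_2(n)=\sum_{j=1}^{l}\bigl(\espir_2(n+j)-\espir_2(n+j-1)\bigr)$ holds, and by Corollary \ref{cor:e_2bounds} the right-hand side is at most $2l$ when $q>3$ and at most $3l$ when $q=3$. So it suffices to show $\espir(l)\ge 2l$ for $q>3$, and $\espir(l)\ge 3l$ for $q=3$. In other words, the lemma follows once we know that the $\{p,q\}$-spiral on $l$ tiles has at least $2l$ (resp. $3l$) edges in total. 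This is a clean sub-goal and I expect it to be the crux of the argument.

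For the sub-goal, first I would handle the base range $l\le$ (a small constant) directly: a single tile has $p\ge 3$ edges, and $p\ge 6$ when $q=3$, so $\espir(1)=p\ge 2$ (resp. $\ge 3$ when $q=3$), and similarly small cases are immediate since $\espir(l)\ge \espir(l-1)+(p-2\epsilon)$ with the worst case $\epsilon=2$ (resp. $\epsilon=3$). This last observation is really the whole inductive step: by Remark \ref{rmk:e_2bounds} adding the $l$-th tile changes $\espir_1$ by $p-2\epsilon$ and $\espir_2$ by $\epsilon$, so $\espir$ increases by exactly $(p-2\epsilon)+\epsilon=p-\epsilon$. When $q>3$ the gluing parameter $\epsilon$ of any newly added spiral tile is $1$ or $2$ (by the Lemma preceding Corollary \ref{cor:e_2bounds}), so $\espir(l)-\espir(l-1)=p-\epsilon\ge p-2\ge 2$. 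When $q=3$, $\epsilon\in\{2,3\}$ and $p\ge 6$, so $\espir(l)-\espir(l-1)=p-\epsilon\ge p-3\ge 3$. Summing these increments from a base case where $\espir\ge 2$ (resp. $3$) already holds gives $\espir(l)\ge 2l$ (resp. $3l$) for all $l\ge 1$.

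Putting the pieces together: for $q>3$,
\begin{align*}
\espir_2(n+l)-\espir_2(n)\le 2l \le \espir(l),
\end{align*}
and for $q=3$,
\begin{align*}
\espir_2(n+l)-\espir_2(n)\le 3l \le \espir(l),
\end{align*}
which in both cases rearranges to $\espir_2(n)+\espir(l)\ge\espir_2(n+l)$, as claimed. The only place needing care is the bookkeeping for the very first tile of a new layer and the layer-closing tile (whose gluing parameters are at the extreme ends of the allowed range); but since the bound $\epsilon\le 2$ (resp. $\epsilon\le 3$) is exactly what holds even in those extreme cases, the uniform increment estimate $\espir(l)-\espir(l-1)=p-\epsilon$ is never violated, and no separate treatment is required beyond checking that the starting value of $\espir$ on one tile already meets the target. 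I expect the main obstacle to be purely presentational — making sure the telescoping over a range of $n$ that may cross several layer boundaries is phrased so that Corollary \ref{cor:e_2bounds} applies verbatim — rather than any genuine mathematical difficulty.
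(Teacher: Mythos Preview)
Your argument is essentially the paper's proof when $p\ge 4$, but it breaks down when $p=3$. The step ``$\espir(l)-\espir(l-1)=p-\epsilon\ge p-2\ge 2$'' silently assumes $p\ge 4$; for $p=3$ (which forces $q\ge 6$, so you are still in the $q>3$ case with $\delta=2$) this gives only $p-2=1$, and the conclusion $\espir(l)\ge 2l$ is in fact false. Concretely, for $\{3,6\}$ one has $\nk(2)=13$ and $\ek(2)=24<2\cdot 13$, so $\espir(13)=24<26$. Thus the inequality $\espir_2(n+l)-\espir_2(n)\le 2l\le \espir(l)$ cannot hold in general when $p=3$, and your reduction to ``$\espir(l)\ge 2l$'' collapses.

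The paper handles $p\ge 4$ exactly as you do, but for $p=3$ it replaces the crude bound by a comparison of how $2$-glued and $1$-glued tiles are distributed: since perimeter vertices of $A_{3,q}(k)$ have degree at most $4$, each saturated vertex requires at least $q-3$ new tiles, so consecutive $2$-glued tiles in the spiral are separated by at least $q-5$ tiles that are $1$-glued. This density control yields $\espir_2(n)+2l-\espir_2(n+l)\ge \espir_2(l)-l$, which after using $\espir(l)=3l-\espir_2(l)$ rearranges to the desired inequality. You will need some argument of this flavor---one that bounds the number of $2$-glued tiles in the window $[n+1,n+l]$ against the number of $1$-glued tiles in $[2,l]$---to close the $p=3$ case.
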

\begin{proof} Note that $\espir(l) = p\cdot l - \espir_2(l)$. By Corollary \ref{cor:e_2bounds} along with the base case $\espir_2(1)=0$,  we know that $\espir_2(l) \leq \noep \cdot l$, where $\noep=2$ for $q>3$, and $\noep=3$ for $q=3$. 
Now consider the case $p\geq 4$, in which $p\geq 2\noep$ (if $\noep=2$ then $q>3$ and $p\geq 4$, if $\noep=3$ then $q=3$ and $p\geq6$). Then we have that 
    \begin{align*}
        \espir_2(n)+\espir(l) &= \espir_2(n)+p\cdot l - \espir_2(l), \\
                                    &\geq \espir_2(n)+p\cdot l - \noep \cdot l, \\
                                    &\geq \espir_2(n)+\noep \cdot l, \\ &\geq \espir_2(n+l).
    \end{align*}

%Where we have used $p\geq 2\epsilon$ (if $\epsilon=2$ then $q>3$ and $p\geq 4$, if $\epsilon=3$ then $q=3$ and $p\geq6$). The last inequality is again Corollary \ref{cor:e_2bounds}.

The last inequality is again Corollary \ref{cor:e_2bounds}, this time applied to $S_{p,q}(n)$.

When $p=3$, we need a sharper analysis. Note that $p=3$ forces $q\ge 6$, and so all tiles are either 1- or 2-glued. Therefore, the increments of the sequence $\espir_2(l)-l$ are either 0 or 1, respectively. Furthermore, up to a small offset due to the initial value being $\espir_2(1)-1 = -1$, this sequence counts the number tiles that are 2-glued. Recall that these 2-glued tiles are exactly those which completely surround a perimeter vertex, forcing it to the interior. Since for $p=3$ the perimeter vertices of $A_{p,q}(k)$ have degree $\leq 4$, we need at least $q-3$ tiles to surround a perimeter vertex, the first and last of which are always 2-glued. Thus the increments of 1 are separated by at least $q-5$ increments of 0. 
On the other hand, for fixed $n$ the sequence $\espir_2(n)+2l-\espir_2(n+l)$ counts the number of 1-glued tiles added when constructing $S_{p,q}(n+l)$ starting from $S_{p,q}(n)$. The increments of this sequence are again 0 or 1, but now the increments by 0 are the 2-glued tiles. Since 1-glued tiles are added at a consistent ratio of at least $q-5$ to 1, and $\espir_2(l)-l$ actually starts at -1, we conclude that
\begin{align*}
    \espir_2(n)+2l-\espir_2(n+l) &\geq \espir_2(l)-l,\\
    &= 3l - \espir(l) - l.
\end{align*}
The equality here follows from the edge partition formula $\espir(l)=3l-\espir_2(l)$, and then a simple rearrangement of the inequality gives the desired result.

%When $p=3$, we need a sharper analysis. Note that the sequence $\tilde{e}_2(h)-h$ counts the number of times a tile is attached along exactly 2 edges when constructing $S_{p,q}(h)$. Therefore, the increments of the sequence $\tilde{e}_2(h)-h$ are 0 or 1. Moreover, the increment of 1 happens when the attached tile gets a new vertex completely surrounded. Recall that the perimeter vertices of $A_{p,q}(k)$ have degree $\leq 4$, so we need at least $q-3$ tiles to surround a perimeter vertex, therefore the increments of 1 are separated by at least $q-5$ increments of 0. Finally, since, $\tilde{e}_2(h)-h$ starts at $-1$ (for $h=1$), we get that, 
%$$\tilde{e}_2(h)-h \leq \bigg \lfloor \frac{h}{q-4}  \bigg \rfloor $$

%On the other hand, for $n$ fixed, the sequence $\tilde{e}_2(n)+2h-\tilde{e}_2(n+h)$ counts the number of times a tile is attached along exactly 1 edge when constructing $S_{p,q}(n+h)$ starting from $S_{p,q}(n)$. The increments of this sequence are again 0 or 1, but now the increments by 0 are the 2-glued tiles, which completely surround a vertex. Thus, the increments of 0 are separated by at least $q-5$ increments of 1, and so 
%\begin{align*}
 %   \tilde{e}_2(n)+2h-\tilde{e}_2(n+h) &\geq \bigg \lfloor \frac{h}{q-4}  \bigg \rfloor (q-5),\\[5pt]
 %   &\geq \bigg \lfloor \frac{h}{q-4}  \bigg \rfloor,\\[5pt]
  %  &\geq \tilde{e}_2(h)-h,\\[5pt]
   % &= 3h - \tilde{e}(h) - h.
%\end{align*}
%In the last line we use the edge partition formula $\tilde{e}(h)=3h-\tilde{e}_2(h)$, and then a simple rearrangement of the inequality gives the desired result.

\end{proof}

    %$$\tilde{e}_2(n)+2h-\tilde{e}_2(n+h) \geq \bigg \lfloor \frac{h}{q-4}  \bigg \rfloor (q-5)$$
%Therefore,  
%$$\tilde{e}_2(n)+2h-\tilde{e}_2(n+h) \geq \bigg \lfloor \frac{h}{q-4}  \bigg \rfloor (q-5) \geq 
        %\bigg \lfloor \frac{h}{q-4}  \bigg \rfloor \geq \tilde{e}_2(h)-h $$
%When we use $\tilde{e}(h) = 3\cdot h - \tilde{e}_2(h)$ the previous inequality reads, $\tilde{e}_2(n)+\tilde{e}(h) \geq \tilde{e}_2(n+h)$. This finishes the proof of the lemma. 
\begin{proof}[Proof of Theorem \ref{thm:extremalspiral}]
Suppose that $e_2(n)$ is maximal in the class of $\{p,q\}$-animals with $h$ holes, and let $A$ be a $\{p,q\}$-animal with $n$ tiles and $h+1$ holes. Let $H$ be one of the holes of $A$, where $H$ itself is regarded as a $\{p,q\}$-animal, and let $l$ be the number of tiles of $H$. Denote by $\bar{A}$ the $\{p,q\}$-animal obtained by filling in $H$ in $A$. This is an animal with $n+l$ tiles and $h$ holes, so by the induction hypothesis $\espir_2(n+l)  \geq e_2\left(\bar{A}\right)$. Observe that $e_2\left(\bar{A}\right)=e_2(A)+e(H)$, and thus Lemma \ref{lem:add-l} implies that
    \begin{align*}
        \espir_2(n)+\espir(l) &\geq \espir_2(n+l),\\ 
                        & \geq e_2\left(\bar{A}\right),\\
                        & = e_2(A)+e(H),\\
                        & \geq e_2(A)+\espir(l).
    \end{align*}
Note that $e(H)\geq \espir(l)$, since $H$ is a $\{p,q\}$-animal with no holes. Therefore, by induction we conclude that $\espir_2(n)\geq e_2(A)$ for any $\{p,q\}$-animal $A$ with $n$ tiles. Then the other inequalities follow, as they did for Corollary \ref{cor:extspir-no-holes}, due to the equalities $e=p\cdot n-e_2$, $e_1=p\cdot n-2e_2$, $v_{int}=1+e_2-n$, and $v=1-n+e$.
\end{proof}

Given the extremality of all the pertinent graph parameters, we now briefly show how to derive the equations from \cite{harary1976extremal} for the Euclidean cases.

\begin{proof}[Proof of Corollary \ref{cor:Harary}] 
For the case $\{p,q\}=\{3,6\}$ we know $\nk(k)=6k^2-6k+1$ and, from Example \ref{ex:degree-sequence}, the sequence $d_k$ is equal to $\underbrace{4,4\dots, 4,3}_{k}\underbrace{4,4\dots, 4,3}_{k-1}$, repeated 3 times, for $k\geq 3$. From these values we deduce the following formula for $\espir_1(n)$ valid for $k\geq 3$.
    \begin{align*}
        \espir_1(n) = 
        \begin{cases}
    6k-3 + \bmod(n-1,2) & \text{if $0\leq n-\nk(k)\leq 1 $} \\
    6k-2 + \bmod(n,2)  & \text{if $2\leq n-\nk(k)\leq 2k-1$}  \\
    6k-1 + \bmod(n-1,2)  & \text{if $2k\leq n-\nk(k)\leq 4k-1$}  \\
    6k + \bmod(n,2)  & \text{if $4k\leq n-\nk(k)\leq 6k$}  \\    
    6k+1 + \bmod(n-1,2)  & \text{if $6k+1\leq n-\nk(k)\leq 8k-1$}  \\    
    6k+2 + \bmod(n,2)  & \text{if $8k\leq n-\nk(k)\leq 10k$}  \\    
    6k+3 + \bmod(n-1,2)  & \text{if $10k+1\leq n-\nk(k)\leq 12k-1$}  \\    
  \end{cases}
  \end{align*}

Then it is a straightforward computation to check that it can be written as
    \begin{align*}
        \espir_1(n) &= 2 \left\lceil \frac12( n+\sqrt{6n}) \right\rceil - n      % p,q = 3,6 
 %       e_1(S_{p,q}(n)) &= 2 \left\lceil 2\sqrt{n} \right\rceil                  \\ %  p,q=4,4
  %      e_1(S_{p,q}(n)) &=  2 \left\lceil \sqrt{12n-3} \right\rceil                  \\ %  p,q=6,3
    \end{align*}
A similar analysis holds for the cases $\{p,q\}=\{4,4\}$ and $\{6,3\}$.
\end{proof}

\section{Enumeration of extremal $\{p,q\}$-animals}

In 2008 \cite{kurz2008counting}, Kurz enumerated polyominoes, i.e. $\{4,4\}$-animals, that attain minimum perimeter for any fixed number of tiles. Moreover, he proved that for all $l\geq 1$ there exists a unique polyomino with $l^2$ tiles which is extremal, and also a unique polyomino with $l(l+1)$ tiles. The former are the square shape polyominoes with side length $l$, and the latter are pronic rectangles with side lengths $l$ and $l+1$. In the general $\{p,q\}$ setting, we find that the sequence $A_{p,q}(k)$ corresponds precisely to squares with odd side lengths, and we define three more sequences corresponding to even squares and the two types of pronic rectangles distinguished by the parity of the shorter side. We are then able to generalize Kurz's result and show that each of these sequences yields unique extremal $\{p,q\}$-animals for any $p$ and $q$.

First we define the sequence $B_{p,q}(k)$ analogously to $A_{p,q}(k)$; but start with $B_{p,q}(1)=x_0$, a single vertex. Then $B_{p,q}(2)$ is the set of $q$ tiles incident to $x_0$, and $B_{p,q}(3)$ is the set of all tiles incident to the perimeter of $B_{p,q}(2)$, and so on. These form another distinct subsequence of spirals that are related to the $A_{p,q}(k)$ through the dual graph. Observe that the $B_{p,q}(k)$ sequence corresponds directly to the square shaped polyominoes with even side length in the $\{4,4\}$ case.

%In \cite{harary1976extremal}, Harary and Harborth also proposed the problem of enumerating extremal animals with $n$ tiles.  In 2008 \cite{kurz2008counting}, Kurz enumerated polyominoes, i.e. $\{4,4\}$-animals, that attain minimum perimeter for any fixed number of tiles. Moreover, he proved that for all $l\geq 1$ there exists a unique polyomino with $l^2$ tiles which is extremal. These are the square shape polyominoes with side length $l$, where the sequence $A_{p,q}(k)$ corresponds precisely to squares with odd side lengths.  

%Here, we prove a similar result for general $\{p,q\}$-animals, that the complete $k$-layered animals $A_{p,q}(k)$ and the animals in a closely related sequence of structures are all unique with respect to having minimal perimeter for their given number of tiles. We define $B_{p,q}(k)$ analogously to $A_{p,q}(k)$; but start with $B_{p,q}(1)=x_0$, a single vertex. Then $B_{p,q}(2)$ is the set of $q$ tiles incident to $x_0$, and $B_{p,q}(3)$ is the set of all tiles incident to the perimeter of $B_{p,q}(2)$, and so on. These form another distinct subsequence of spirals that are related to the $A_{p,q}(k)$ through the dual graph. Observe that the $B_{p,q}(k)$ sequence corresponds directly to the square shaped polyominoes with even side length in the $\{4,4\}$ case.

\begin{rmk}
The dual of $A_{p,q}(k)$ is $B_{q,p}(k)$, and thus $B_{p,q}(k)=S_{p,q}(n)$ for $n=v_{int}\left(A_{q,p}(k)\right)$. Similarly, the dual of $B_{p,q}(k)$ is $A_{q,p}(k-1)$.
\end{rmk}

We will use the duality of these two structures to build an inductive proof of their uniqueness, but first, we require a short proposition establishing a condition under which, up to isometries, an animal and its dual graph are in 1-to-1 correspondence.

\begin{prop}\label{prop:nice_dual}
Let $A$ be a $\{p,q\}$-animal such that $e_{2,0}(A)=0$. If $M$ is a $\{p,q\}$-animal with $M'=A'$, then $M=A$.
\end{prop}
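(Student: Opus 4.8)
The plan is to reconstruct $A$ from its dual graph $A'$ by viewing the dual operation as (almost) invertible whenever $e_{2,0}(A)=0$. The key observation is that when $e_{2,0}(A)=0$, every edge of $A$ is incident to at least one face of $A$, so the dual graph $A'$ is itself a $\{q,p\}$-animal: its faces correspond to interior vertices of $A$, its vertices to faces of $A$, and — crucially — there are no dangling dual edges that fail to bound a dual face. Thus $A'$ carries the full combinatorial data of $A$: the faces of $A$ are recovered as the vertices of $A'$, and two faces of $A$ are glued along an edge exactly when the corresponding vertices of $A'$ are adjacent. The claim is then that this data, up to isometry, determines $A$ uniquely.

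First I would make precise the sense in which $A'$ determines $A$. Because $A$ is an animal living in the $\{p,q\}$-tessellation, its tiles sit at fixed positions of the tessellation, and the adjacency pattern of tiles — which is exactly the information in $A'$ together with the knowledge that $A'$ embeds in the dual $\{q,p\}$-tessellation — pins down, up to a global isometry of the plane, which set of tiles of the $\{p,q\}$-tessellation constitutes $A$. Concretely: place one tile $T_1$ of $A$ arbitrarily (this is the isometry freedom); then for each neighbor of $T_1$ in $A'$, there is a well-defined adjacent tile in the tessellation, and we proceed by breadth-first search along $A'$, at each step placing the tile dictated by the adjacency. The hypothesis $e_{2,0}(A)=0$ guarantees the dual graph is connected as an animal (all of $A$'s edges lie in faces of $A'$ or on its perimeter, but none are isolated), so this BFS visits every tile of $A$. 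Running the identical procedure for $M$, using the hypothesis $M'=A'$, we obtain the same sequence of placements, hence $M=A$ up to isometry.

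The one technical point that needs care — and which I expect to be the main obstacle — is showing that the BFS placement is well-defined and consistent, i.e. that when a tile $T$ can be reached by two different paths in $A'$, both paths place it in the same location of the tessellation. This is where the structure of the $\{p,q\}$-tessellation (and the absence of holes/the planarity of $A$) is essential: a cycle in $A'$ bounds a region, and going around that cycle in the tessellation must return to the starting tile because the tessellation is a simply-connected surface and $A$ has no holes. I would argue this by noting that any cycle in $A'$ corresponds to a closed chain of tiles in $A$ surrounding a (filled) region, so the composition of the local "move to adjacent tile" maps around the cycle is the identity isometry — this is essentially the statement that the $\{p,q\}$-tessellation is a genuine tessellation of the simply-connected hyperbolic (or Euclidean) plane. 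Once consistency is established, the equality $M = A$ up to isometry is immediate. I would also remark that the hypothesis $e_{2,0}(A)=0$ cannot be dropped: a dual edge not lying in any dual face carries no information about the relative position of the two tiles it connects beyond "adjacent", and in general there can be several inequivalent ways to realize such an adjacency, so $M'=A'$ would not force $M=A$.
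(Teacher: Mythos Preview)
Your BFS reconstruction has a genuine gap at the step ``for each neighbor of $T_1$ in $A'$, there is a well-defined adjacent tile in the tessellation.'' An edge of $A'$ only records that two tiles share \emph{some} edge; it does not by itself tell you which of the $p$ lattice-neighbors of $T_1$ to pick. The cyclic order of edges around a vertex of $A'$ fixes the relative order of the neighbors but not the sizes of the gaps between them, so your placement is underdetermined at each step --- this is exactly the ambiguity you flag in your final remark, but it bites already in the main argument, not only in the counterexample. Relatedly, you misidentify where the hypothesis enters: $A'$ is always connected for an animal $A$ (by definition of animal), so $e_{2,0}(A)=0$ is not needed for the BFS to reach every tile. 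Its real role is to remove the placement ambiguity, and your proof does not explain how. (Also, the sentence ``every edge of $A$ is incident to at least one face of $A$'' should read ``every edge of $A'$ is incident to at least one face of $A'$''; the former is automatic for any animal.)

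The paper sidesteps the whole reconstruction issue with a one-line characterization: since $e_{2,0}(A)=0$, every vertex of $A'$ lies on a face of $A'$, i.e.\ every tile of $A$ is incident to some saturated vertex of $A$; conversely any tile incident to a saturated vertex is in $A$ by definition of saturation. Hence $A$ is exactly the union of the $q$-tile stars around its saturated vertices, and these saturated vertices are precisely the faces of $A'$. The same holds for $M$, so $M'=A'$ forces the same set of saturated vertices and therefore $M=A$. If you want to rescue your approach, you should propagate along \emph{faces} of $A'$ (each a $q$-gon pinning down the $q$ tiles around one lattice vertex) rather than along edges; that is essentially what the paper's argument does in disguise.
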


\begin{proof}
The faces of $A'$ always represent a fixed set of vertices in the $\{p,q\}$-lattice which are saturated in $A$. By definition, a vertex is saturated if and only if all $q$ incident tiles are present in $A$. But if $e_{2,0}(A)=0$, then every vertex of $A'$ is incident to a face of $A'$, and hence every tile of $A$ is incident to a saturated vertex of $A$. So a tile is in $A$ if and only if it is incident to a saturated vertex. But then $M'=A'$ implies that also $e_{2,0}(M)=0$, so by the same reasoning a tile is in $M$ if and only if it is incident to a saturated vertex. Therefore, since $A$ and $M$ have the same set of saturated vertices, it must be that $M=A$.
\end{proof}

\begin{thm}\label{thm:uniq_layered}
For any $k\ge1$, $A_{p,q}(k)$ is the unique extremal $\{p,q\}$-animal with $\nk(k)$ tiles. Similarly, the same is true for $B_{p,q}(k)$ for $k\ge 2$, where the number of tiles is computed from $S_{p,q}(n)$ for $n=v_{int}\left(A_{q,p}(k)\right)$.
\end{thm}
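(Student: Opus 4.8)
The plan is to prove both assertions at once, by induction on $k$ ranging over all pairs $\{p,q\}$ simultaneously, using the duality relations $A_{p,q}(k)'=B_{q,p}(k)$ and $B_{p,q}(k)'=A_{q,p}(k-1)$ recorded above together with the rigidity in Proposition \ref{prop:nice_dual}. Recall first that an animal is extremal precisely when its $e_2$ is maximal, the other extremal parameters being determined by $e_2$ through $e=p\cdot n-e_2$, $v_{int}=1+e_2-n$, and so on. The first ingredient I would nail down is that $e_{2,0}(A_{p,q}(k))=0$ for all $k\ge1$ and $e_{2,0}(B_{p,q}(k))=0$ for all $k\ge2$: an interior edge of $A_{p,q}(k)$ either already lies inside $A_{p,q}(k-1)$, where by induction one of its endpoints is interior, or it is one of the edges along which the $k$-th layer is glued, and the two endpoints of such an edge were perimeter vertices of $A_{p,q}(k-1)$ and so get saturated in $A_{p,q}(k)$ by construction; the base case and the $B_{p,q}$ version are the same. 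This is exactly the hypothesis that lets us apply Proposition \ref{prop:nice_dual} to $A_{p,q}(k)$ and to $B_{p,q}(k)$.

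The heart of the argument is the claim that the dual of an extremal animal is again an extremal animal. Precisely: if $G$ is an extremal $\{p,q\}$-animal with $n$ tiles then $e_2(G)$ equals the maximum provided by Lemma \ref{lem:e2-pq-no-holes}, so every inequality in the proof of Lemma \ref{lem:e2_dual_lift} is forced to be an equality for $G$; reading off these equalities yields $e_{2,0}(G)=0$, $c'(G)=1$, and $e_{2,2}(G)=e_2\big(S_{q,p}(v_{int}(G))\big)$ --- that is, $e_2(G')$ is maximal among $\{q,p\}$-graphs with $v_{int}(G)$ faces. Consequently $G'$ is a $\{q,p\}$-graph that is connected (since $c'(G)=1$), has no edge lying outside a face (since $e_{2,0}(G)=0$), and, because $e_2(G')$ is likewise maximal, running the same equality analysis on $G'$ shows its own dual is connected too; these three facts together exclude pinch vertices, so $G'$ is a genuine $\{q,p\}$-animal. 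Being $e_2$-maximal it is extremal, and its tile count is $v_{int}(G)=1-n+e_2(G)$ by Euler's formula.

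The induction is then short. Base case: the unique $\{p,q\}$-animal with one tile is $A_{p,q}(1)$. For the step at $A_{p,q}(k)$: if $G$ is an extremal $\{p,q\}$-animal with $\nk(k)$ tiles, the heart of the argument makes $G'$ an extremal $\{q,p\}$-animal with $v_{int}(G)=v_{int}(A_{p,q}(k))=n\big(B_{q,p}(k)\big)$ tiles, so by the inductive hypothesis for $B_{q,p}$ at level $k$ we get $G'=B_{q,p}(k)=A_{p,q}(k)'$, and Proposition \ref{prop:nice_dual} (valid since $e_{2,0}(A_{p,q}(k))=0$) lifts this to $G=A_{p,q}(k)$. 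The step at $B_{p,q}(k)$ is identical, with $G'$ now an extremal $\{q,p\}$-animal with $v_{int}(B_{p,q}(k))=n\big(A_{q,p}(k-1)\big)$ tiles, invoking the hypothesis for $A_{q,p}$ at level $k-1$; since the statement for $B_{p,q}(2)$ rests on that for $A_{q,p}(1)$, the alternation $A(k)\Leftarrow B(k)\Leftarrow A(k-1)$ closes the induction.

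The main obstacle I anticipate is the ``heart'' step: one has to go back into the proofs of Lemmas \ref{lem:e2_dual_lift} and \ref{lem:e2-pq-no-holes} and verify that maximality of $e_2(G)$ really does pin down $e_{2,0}(G)=0$, $c'(G)=1$, \emph{and} the maximality of $e_2(G')$ (the coefficient of $c'-1$ there being strictly negative, using that $q=3$ forces $p\ge6$); then confirm that $G'$ is honestly an animal rather than a union of animals meeting at points (the pinch-vertex check above); and finally check that its number of tiles is \emph{exactly} the value $n(B_{q,p}(k))$ or $n(A_{q,p}(k-1))$ indexing the inductive hypothesis, rather than a neighbouring value. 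Once that is in place, the bookkeeping about which ``special'' count pairs with $A$ versus $B$ and with which parity of $k$, and the application of Proposition \ref{prop:nice_dual}, is routine.
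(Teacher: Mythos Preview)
Your approach is essentially the same as the paper's: a simultaneous induction over all $\{p,q\}$, zippering $A_{p,q}(k)\Leftarrow B_{q,p}(k)\Leftarrow A_{p,q}(k-1)$ via duality and then applying Proposition~\ref{prop:nice_dual}. The paper's execution is slightly more direct---rather than re-entering the proof of Lemma~\ref{lem:e2_dual_lift} to force equalities, it compares $G$ and the reference animal directly in Equation~(\ref{eqn:interior-edges}) (with $c'=1$ automatic for any animal), using $e_{2,2}(G)\le e_{2,2}(B_{p,q}(k))$ from the extremality of the dual spiral to conclude $e_{2,0}(G)=0$ and $e_{2,2}(G)=e_{2,2}(B_{p,q}(k))$ in one line. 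On the other hand, you are more explicit than the paper about two points it leaves implicit: (i) verifying $e_{2,0}=0$ for the reference animals $A_{p,q}(k)$ and $B_{p,q}(k)$ themselves, and (ii) worrying about whether $G'$ is genuinely an animal before invoking the inductive hypothesis. Neither addition changes the structure of the argument.
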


\begin{proof}[Proof of Theorem \ref{thm:uniq_layered}]
The statement is trivially true for $A_{p,q}(1)$ for all $p$ and $q$. The induction will proceed by zippering the two sequences, with $B_{p,q}(k)$ following from $A_{q,p}(k-1)$ and then $A_{p,q}(k)$ following from $B_{q,p}(k)$.

Suppose that $A_{p,q}(k-1)$ is a unique extremal animal for all $p$ and $q$. Then for any fixed $p$ and $q$, let $B$ be a $\{p,q\}$-animal with $n(B)=n\left(B_{p,q}(k)\right)$ and perimeter $e_1(B)=e_1\left(B_{p,q}(k)\right)$. By the relations in the proof of Corollary \ref{cor:extspir-no-holes}, we then have that $B$ has the same number of interior edges and interior vertices as $B_{p,q}(k)$. Thus $B'$ has the same number of edges and faces as the dual of $B_{p,q}(k)$, which is $A_{q,p}(k-1)$.

%Recall that the dual of $A_{p,q}(k)$ has $\ek_{2,2}(k)$ interior edges, and since $B_{q,p}(k)$ is itself a spiral, 

Since $A_{q,p}(k-1)$ is itself a spiral, it's number of edges $e_{2,2}\left(B_{p,q}(k)\right)$ is maximal for its number of faces. Hence $e_{2,2}(B)\le e_{2,2}\left(B_{p,q}(k)\right)$. Then by Equation (\ref{eqn:interior-edges}) with $c'=1$ in both cases, it must be that $e_{2,0}(B)=0$, and thus $e_{2,2}(A) = e_{2,2}\left(B_{p,q}(k)\right)$. So the dual of $B$ has the same number of faces, edges, and interior edges as $A_{q,p}(k-1)$. And by our inductive assumption $A_{q,p}(k-1)$ is the unique animal with these parameters, and it must be that $B'=A_{q,p}(k-1)$. Hence, since $e_{2,0}=0$, by Proposition \ref{prop:nice_dual} we can lift this equality to get $B=B_{p,q}(k)$, which is therefore the unique extremal animal with $n=v_{int}\left(A_{q,p}(k)\right)$ tiles.

The exact same argument suffices to show that the uniqueness of $A_{p,q}(k)$ follows from knowing uniqueness for $B_{q,p}(k)$. Therefore, by induction we have uniqueness of $A_{p,q}(k)$ for all $k\ge1$, and for $B_{p,q}(k)$ for all $k\ge 2$.
\end{proof}

%In the $\{4,4\}$ case \cite{kurz2008counting}, Kurz also proved that the pronic rectangles, i.e. rectangles that have $l(l-1)$ tiles for $l\leq 1$, are unique extremal animals up to rotations.  Here we generalize this result to all $\{p,q\}$ pairs.

To extend now the pronic case, we define two more sequences in a similar fashion to the $A_{p,q}(k)$ and $B_{p,q}(k)$. Let $C_{p,q}(1)$ be the $\{p,q\}$-animal which has two tiles glued along a single adjoining edge, and let $D_{p,q}(1)$ be the $\{p,q\}$-graph which is simply a single edge connecting two vertices. Then the sequences $C_{p,q}(k)$ and $D_{p,q}(k)$ are defined analogously, with all possible tiles incident to the perimeter added in each subsequent step.

    \begin{figure}[h]  
    \centering
   \includegraphics[scale=2.2]{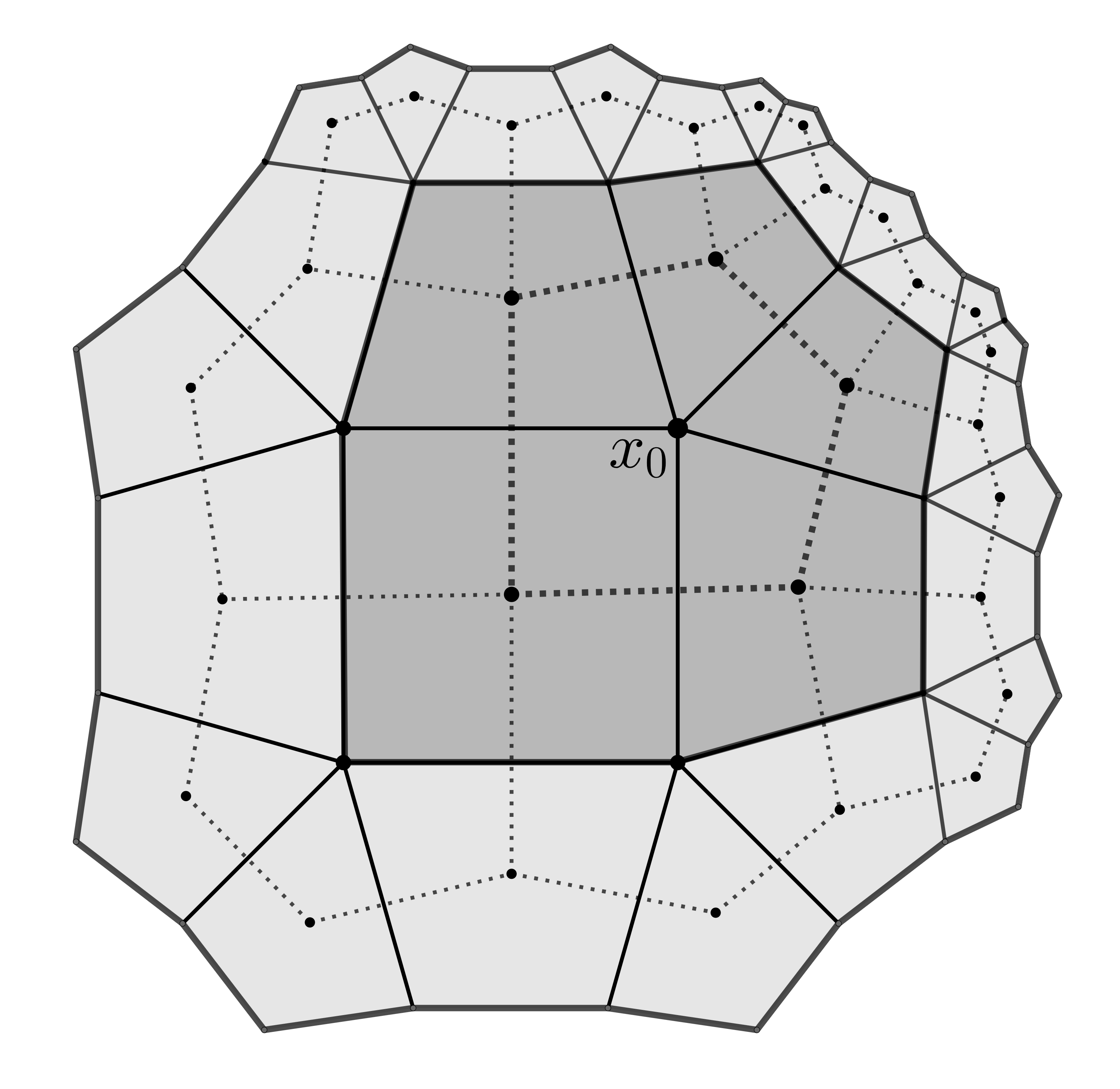} \hspace{0.5cm}
   \includegraphics[scale=2.5]{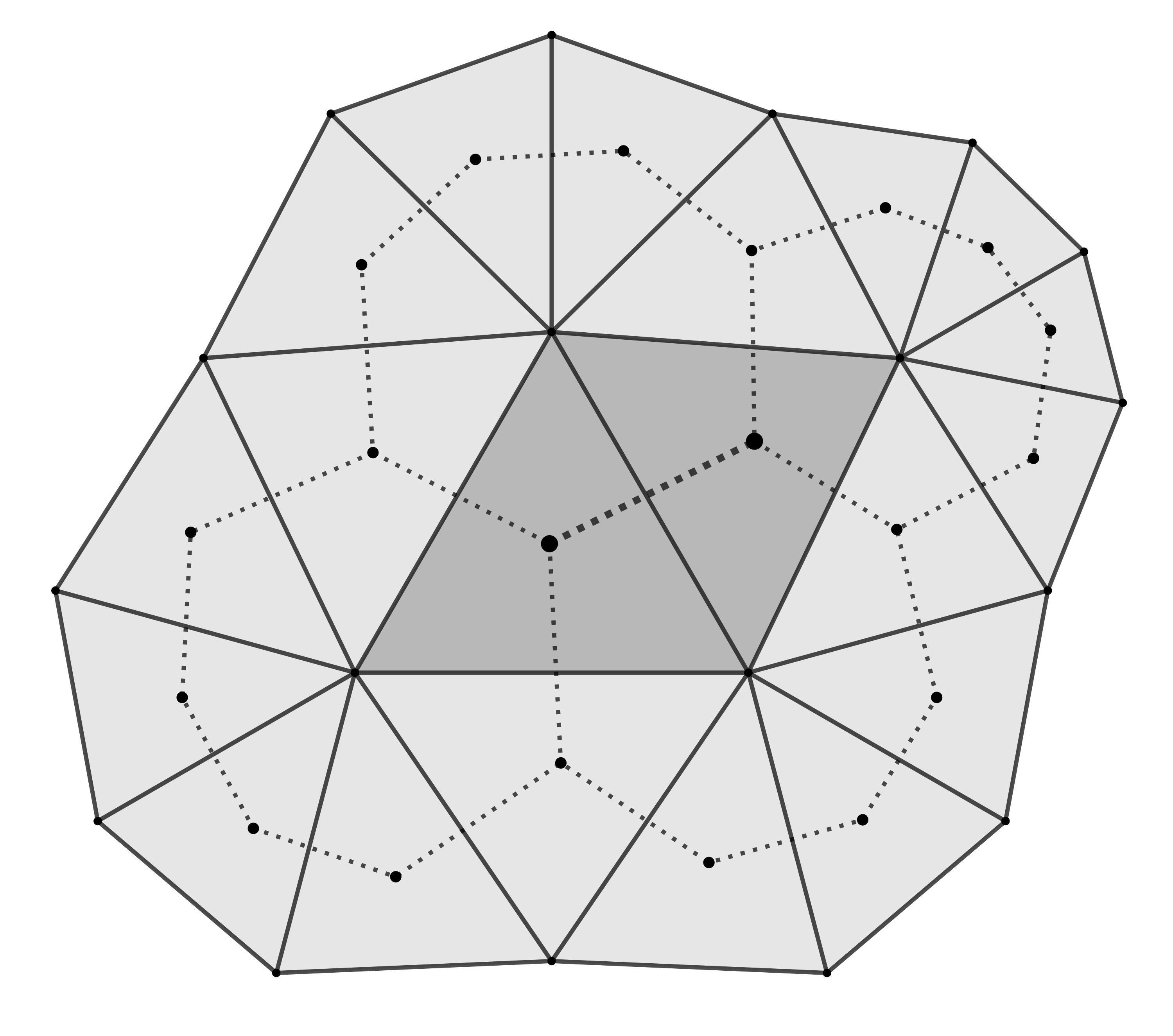} \hspace{0.5cm}
   \includegraphics[scale=2.5]{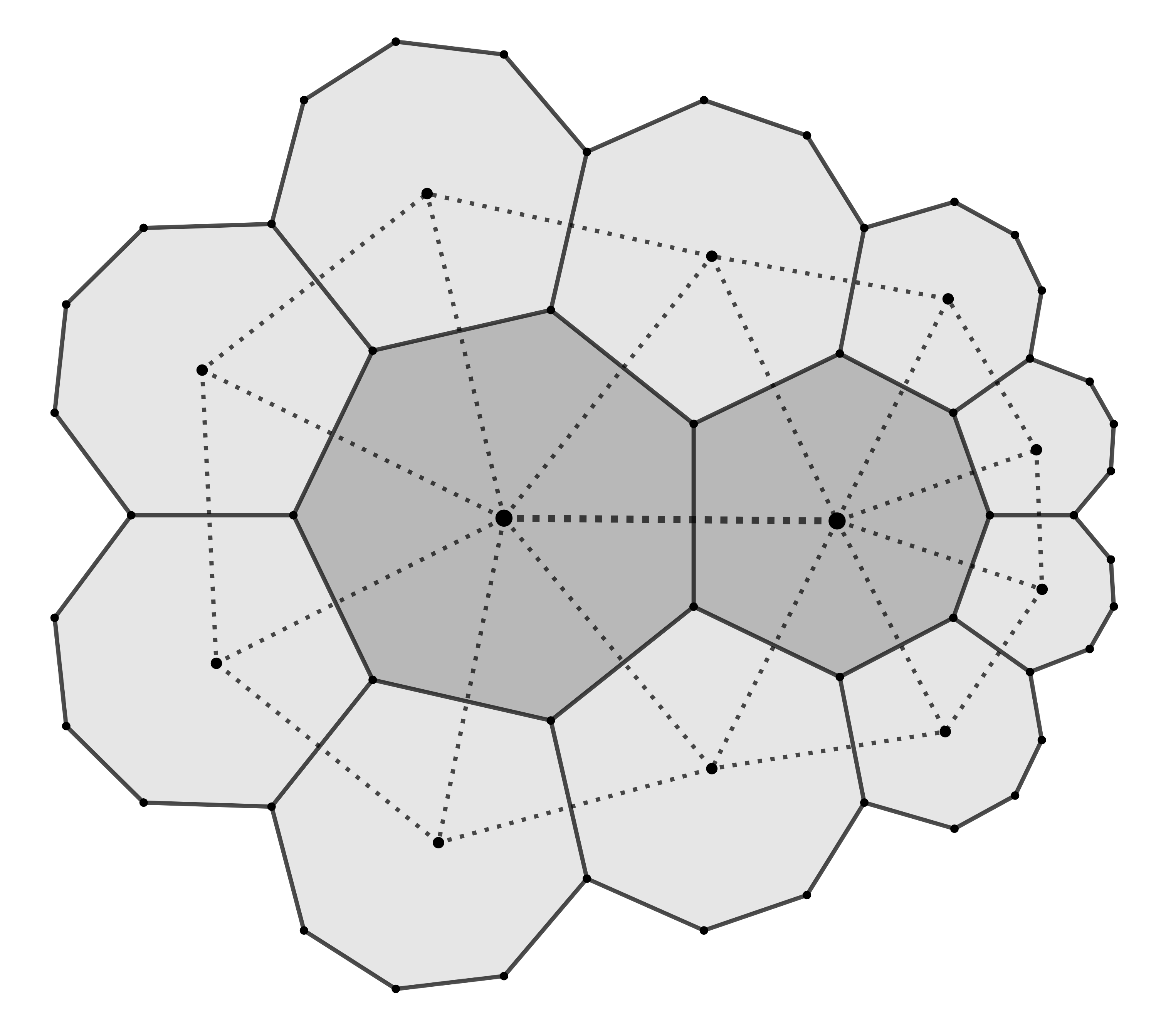}
    \caption{\textit{Left:} $B_{4,5}(3)$ and its dual $A_{5,4}(2)$. \textit{Center:} $C_{3,7}(2)$ and its dual $D_{7,3}(2)$. \textit{Right:} $C_{7,3}(2)$ and its dual $C_{3,7}(2)$ }  \label{fig:CD}
    \end{figure}
    
\begin{thm}\label{thm:pronic}
For any $k\ge1$, $C_{p,q}(k)$ is a unique extremal animal. Similarly, for any $k\ge2$, $D_{p,q}(k)$ is a unique extremal animal.
\end{thm}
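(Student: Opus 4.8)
The plan is to mimic the proof of Theorem \ref{thm:uniq_layered} almost verbatim, zippering the two new sequences $C_{p,q}(k)$ and $D_{p,q}(k)$ together through the dual operation, but now bootstrapping off the uniqueness already established for the $A$ and $B$ sequences where the base cases require it. First I would record the analogue of the duality remark: the dual of $C_{p,q}(k)$ is $D_{q,p}(k)$, and the dual of $D_{p,q}(k)$ is $C_{q,p}(k-1)$; this can be read off directly from the definitions, since $D_{p,q}(1)$ is a single edge whose saturation produces the $q$-around-an-edge configuration dual to $C_{q,p}(1)$ (two tiles sharing an edge), and then each layering step on one side corresponds to passing to interior vertices on the other. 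As in Theorem \ref{thm:uniq_layered}, each $C_{p,q}(k)$ and $D_{p,q}(k)$ is a subsequence of the spirals $S_{p,q}(n)$, so by Theorem \ref{thm:extremalspiral} they are extremal; what must be shown is uniqueness up to isometry.

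Next I would set up the induction exactly as before. The base case is $C_{p,q}(1)$: an extremal $\{p,q\}$-animal with two tiles must have a single shared edge, hence equals $C_{p,q}(1)$ up to isometry. For the inductive step, suppose $C_{q,p}(k-1)$ is the unique extremal animal with its number of tiles (for all $p,q$); let $D$ be a $\{p,q\}$-animal with $n(D)=n(D_{p,q}(k))$ and $e_1(D)=e_1(D_{p,q}(k))$. By the relations $e_2 = 1+v_{int}-1 \cdots$ — more precisely the identities $e=p\cdot n-e_2$, $e_1=p\cdot n-2e_2$, $v_{int}=1+e_2-n$ used in Corollary \ref{cor:extspir-no-holes} — $D$ has the same $e_2$ and $v_{int}$ as $D_{p,q}(k)$, so $D'$ has the same number of faces and edges as the dual $C_{q,p}(k-1)$ of $D_{p,q}(k)$. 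Since $C_{q,p}(k-1)$ is a spiral, Lemma \ref{lem:e2-pq-no-holes} gives $e_{2,2}(D)=e_2(D')\le e_2(C_{q,p}(k-1))=e_{2,2}(D_{p,q}(k))$, and plugging into Equation (\ref{eqn:interior-edges}) with $c'=1$ forces $e_{2,0}(D)=0$ and equality $e_{2,2}(D)=e_{2,2}(D_{p,q}(k))$. Then $D'$ has the same faces, edges and interior edges as the unique extremal animal $C_{q,p}(k-1)$, so $D'=C_{q,p}(k-1)$; since $e_{2,0}(D)=0$, Proposition \ref{prop:nice_dual} lifts this to $D=D_{p,q}(k)$. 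The mirror-image argument deduces uniqueness of $C_{p,q}(k)$ from uniqueness of $D_{q,p}(k)$, and zippering gives the theorem for $C_{p,q}(k)$ for $k\ge1$ and $D_{p,q}(k)$ for $k\ge2$.

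The main obstacle I anticipate is bookkeeping around the base cases and the indexing of the zipper: one must verify that the duality shifts ($D_{p,q}(k)\leftrightarrow C_{q,p}(k-1)$ versus $C_{p,q}(k)\leftrightarrow D_{q,p}(k)$) really do decrease a parameter so the induction is well-founded, and that the very first rungs ($C_{p,q}(1)$ directly, and $D_{p,q}(2)$ as the first case where the dual $C_{q,p}(1)$ is available) are genuinely covered — in particular that $D_{p,q}(1)$, being a graph and not an animal, is correctly excluded from the uniqueness claim (hence $k\ge 2$ for $D$). A secondary point to check is that the hypothesis $e_{2,0}=0$ of Proposition \ref{prop:nice_dual} genuinely holds for $C_{p,q}(k)$ and $D_{p,q}(k)$; this should follow because, like the $A$ and $B$ sequences, every tile in these layered structures is incident to a saturated vertex, but it is worth stating explicitly. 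Once these are in place, everything else is a routine transcription of the proof of Theorem \ref{thm:uniq_layered}.
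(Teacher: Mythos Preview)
Your proposal is correct and follows essentially the same approach as the paper: establish the duality relations $C_{p,q}(k)' = D_{q,p}(k)$ and $D_{p,q}(k)' = C_{q,p}(k-1)$, take the trivial uniqueness of the two-tile animal $C_{p,q}(1)$ as the base case, and zipper the induction exactly as in Theorem~\ref{thm:uniq_layered}. The only superfluous element is your mention of bootstrapping from the $A$ and $B$ sequences, which is not actually needed (nor used in your own outline); and your worry about verifying $e_{2,0}=0$ for $C_{p,q}(k)$ and $D_{p,q}(k)$ themselves is slightly misplaced, since the argument only requires deriving $e_{2,0}=0$ for the competitor via Equation~(\ref{eqn:interior-edges}), which you already do correctly.
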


\begin{proof}
 Note that as in the previous case, both of these sequences are subsequences of the spiral $S_{p,q}(n)$, and thus are extremal. Furthermore, the dual of $C_{p,q}(k)$ is $D_{q,p}(k)$, and the dual of $D_{p,q}(k)$ is $C_{q,p}(k-1)$. It is clear that up to isometries that there is only one $\{p,q\}$-animal on two tiles, and hence $C_{p,q}(1)$ is unique. The proof then follows by the same inductive argument as in the proof of Theorem \ref{thm:uniq_layered}.
\end{proof}

    \begin{figure}[h]  
    \centering
   \includegraphics[scale=1.5]{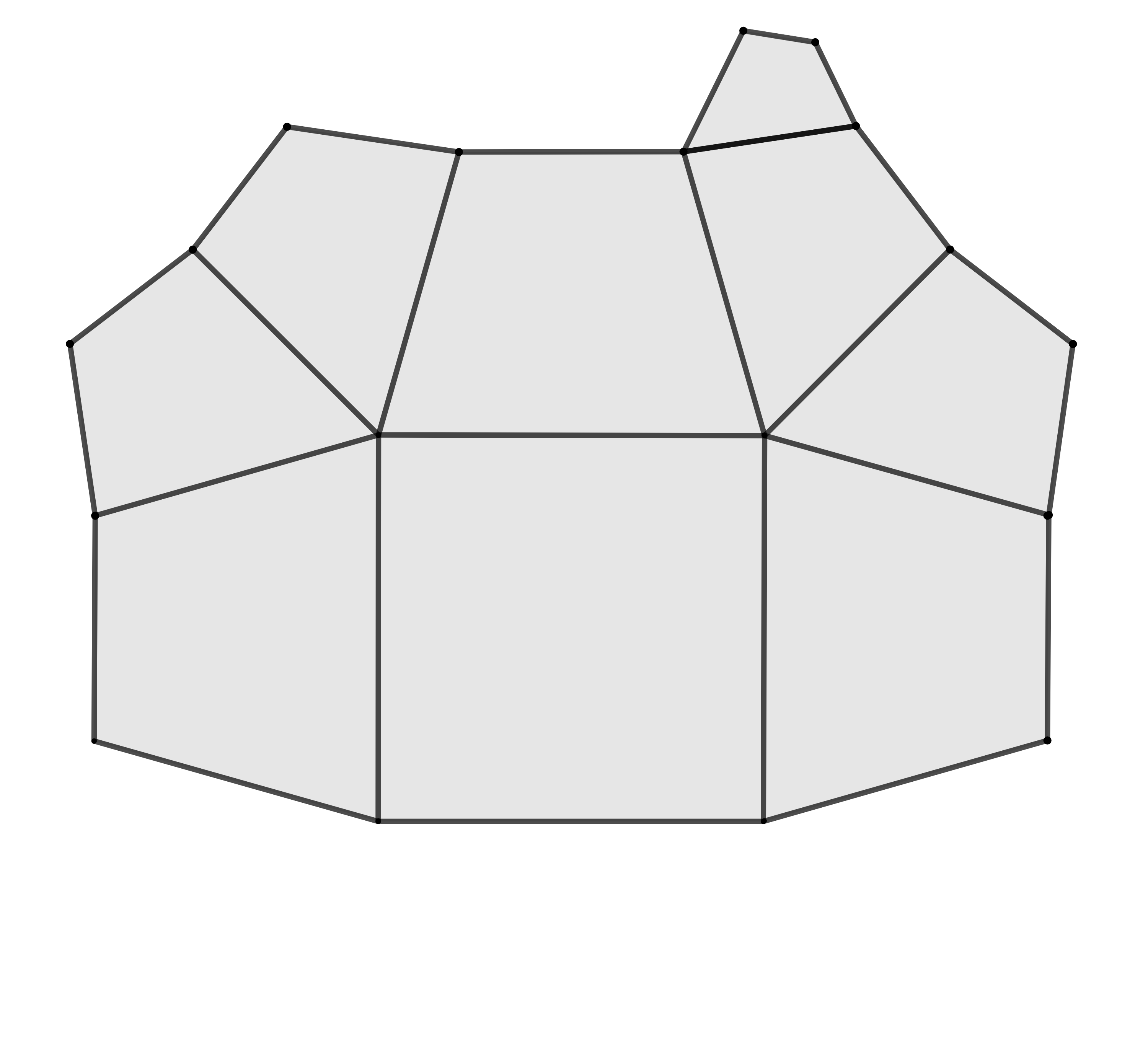}
   \includegraphics[scale=1.5]{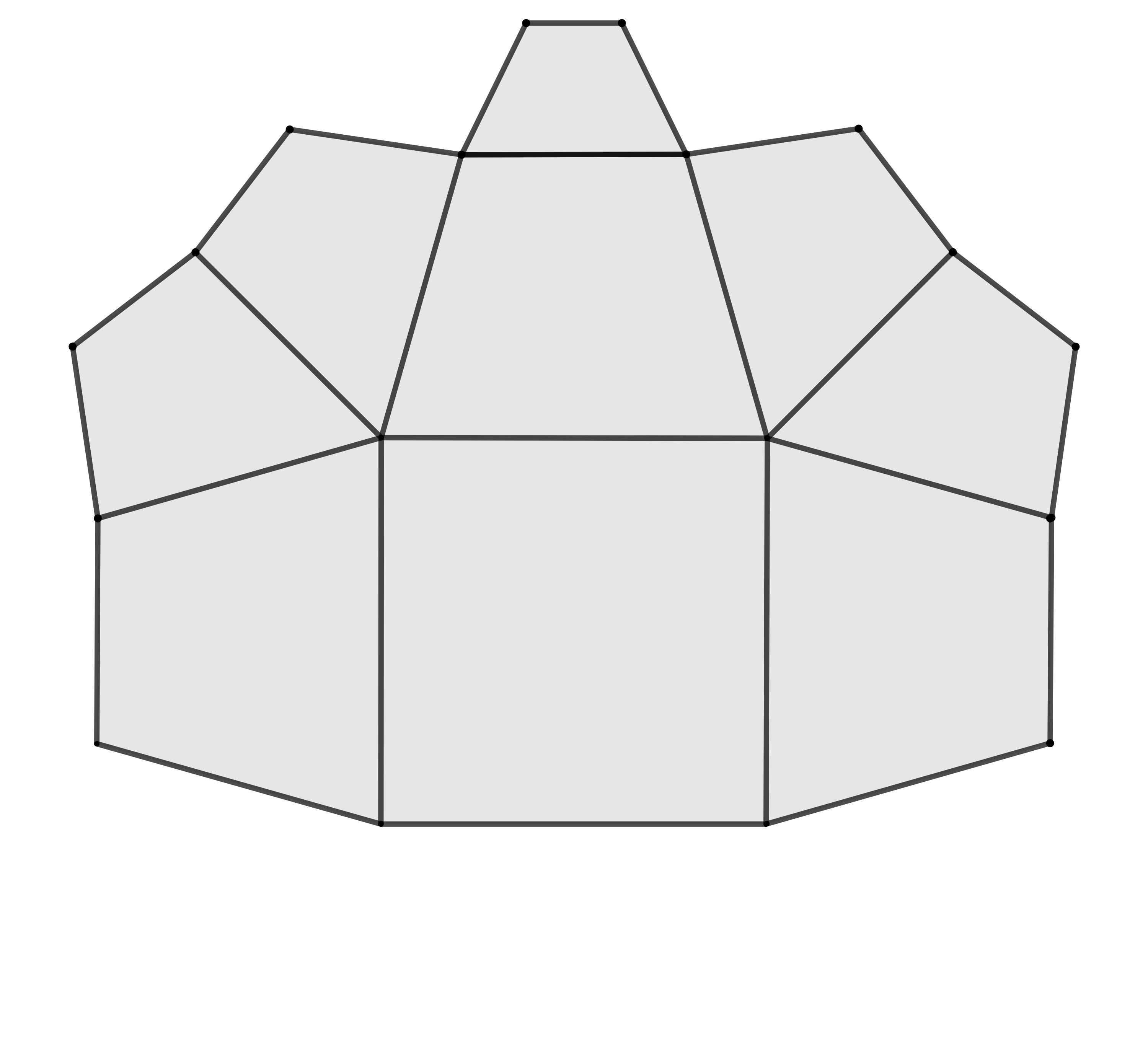}
   \includegraphics[scale=1.5]{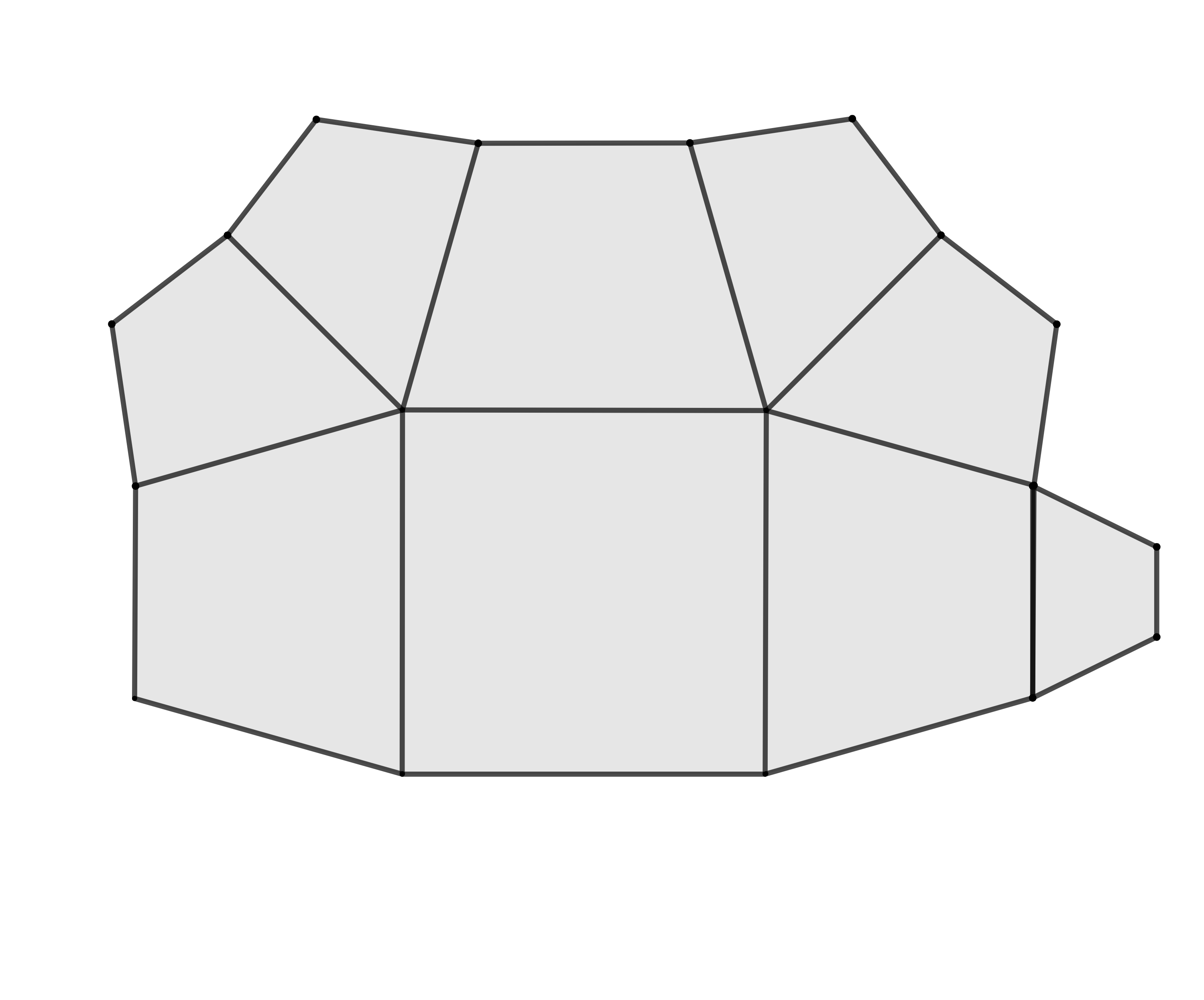}
   \includegraphics[scale=1.5]{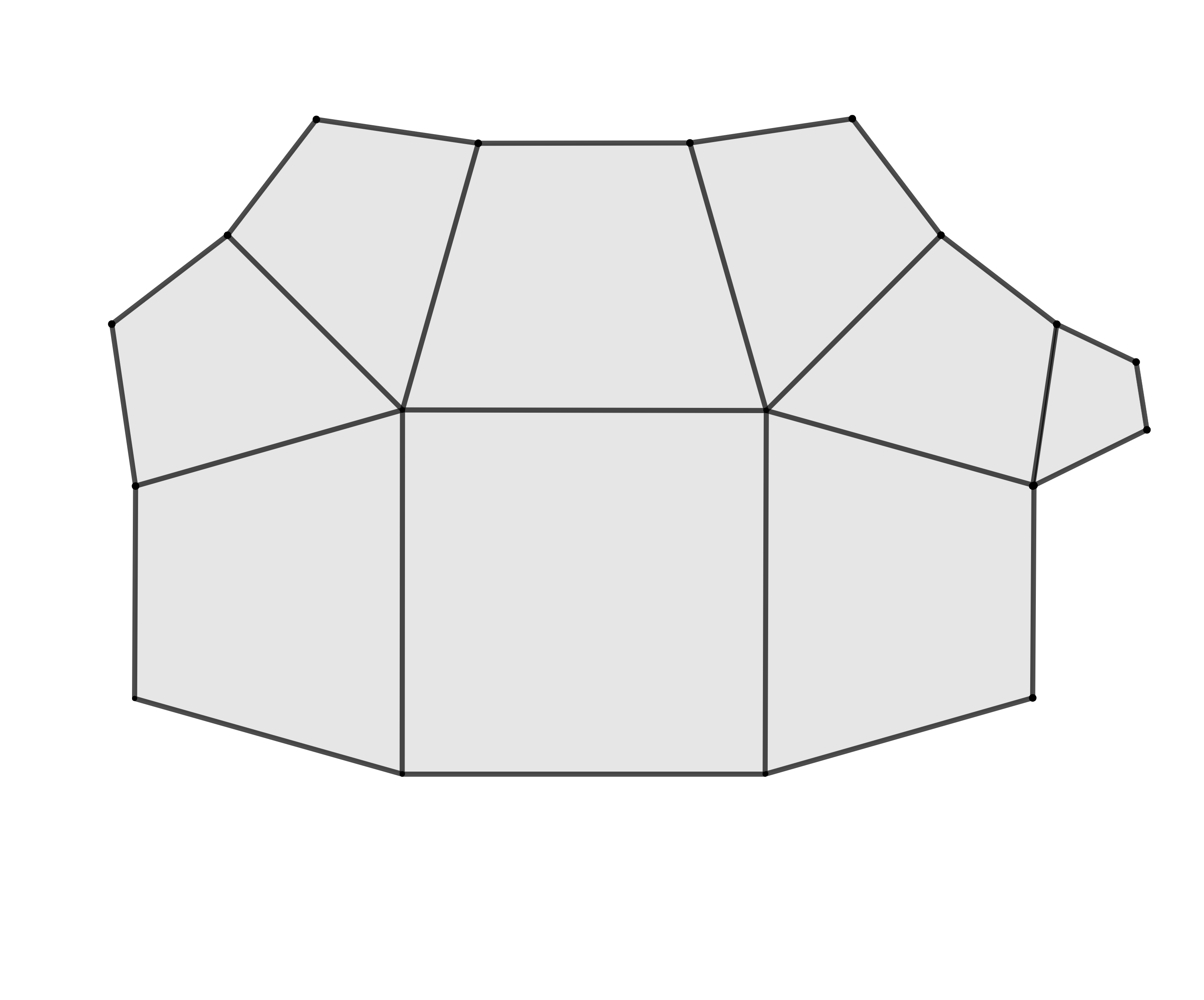}
   \includegraphics[scale=1.5]{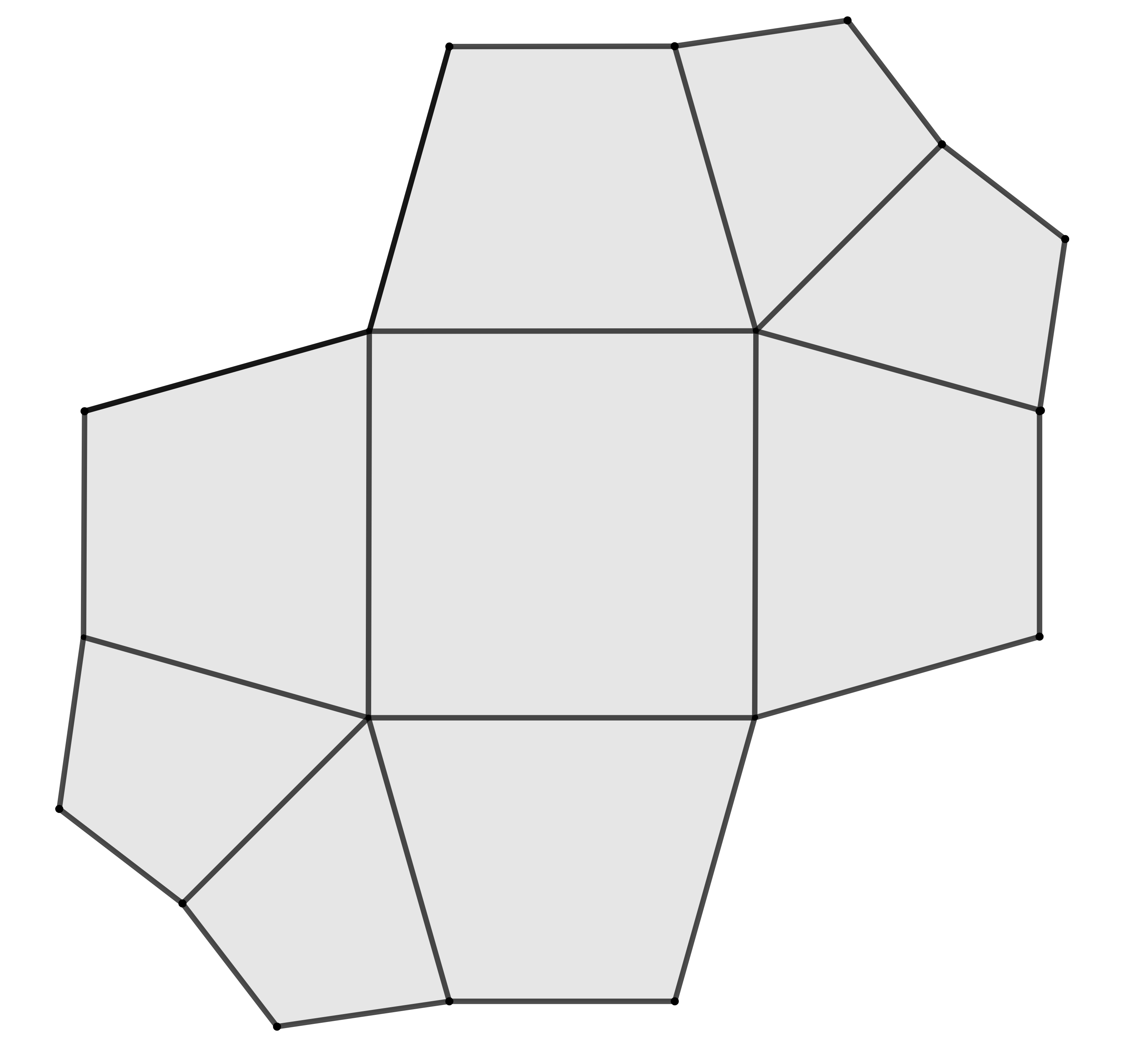}
    \caption{Free extremal $\{4,5\}$-animals with 9 tiles.} %Note that $e_{2,0}\neq 0$ for the three polyominoes from left to right, whereas, $e_{2,0}=0$ for the others.}  
    \label{fig:extremal}
    \end{figure}

As shown in Figure \ref{fig:extremal}, it is not always the case that all $\{p,q\}$-spirals are \emph{unique} extremal animals. In the $\{4,5\}$-tessellation, for example, it is straightforward to construct an animal with $9$ tiles which is extremal, but not equivalent to $S_{4,5}(9)$, up to isometries. Note that in this setting $A_{p,q}(2)$ has 13 tiles, and tiles which are incident to only a vertex of $A_{p,q}(1)$ naturally appear in pairs in the corners. Then let $A$ be an animal obtained by removing two of these corner pairs. It is simple to compute that both $A$ and $S_{4,5}(9)$ will have $e_1=16$, and since they also have the same number of tiles, all other graph parameters will also be equal. Hence $A$ is also extremal. Depicted in Figure \ref{fig:extremal}, we see that there are exactly five distinct extremal $\{p,q\}$-animals with 9 tiles up to isometries, where the image on the far left is $S_{4,5}(9)$.

\section{Conclusions and open problems}
Recall that Theorem \ref{thm:extremalspiral} asserts that $\mathcal{P}_{p,q}(n)$, the minimum perimeter that an $n$-tile $\{p,q\}$-animal can attain, is equal to $e_1(n)$. Although here we give an algorithm to iteratively find the values of $\mathcal{P}_{p,q}(n)$, it remains as an open problem to find closed formulas for $\mathcal{P}_{p,q}(n)$ in the hyperbolic cases. The substitution rules seen in Section \ref{section:layers} hint at the theory of Sturmian words as a promising approach to finding such formulas. Nevertheless, we can derive some interesting properties of $\mathcal{P}_{p,q}$ as a direct consequence of Corollary \ref{cor:e_2bounds} and Theorem \ref{thm:extremalspiral}. For instance, we have that $\mathcal{P}_{3,q}(n)-\mathcal{P}_{3,q}(n) = \pm 1 $ for $q\ge 6$, $\mathcal{P}_{p,q}(n)$ is non-decreasing when $p=4$ or $\{p,q\}=\{6,3\}$, and $\mathcal{P}_{p,q}(n)$ is strictly increasing otherwise. These properties are summarized in the following Corollary. 
\begin{cor}\label{cor:increasing?}
For any $\{p,q\}$ with $(p-2)(q-2)\geq 4$,
\begin{align*}
&\mathcal{P}_{p,q}(n+1)-\mathcal{P}_{p,q}(n) = p-2 \text{ or $p-4$, when $q>3$.}  \\[4pt]
&\mathcal{P}_{p,3}(n+1)-\mathcal{P}_{p,3}(n)= p-4 \text{ or $p-6$, for $n>1$.}
\end{align*}   
\end{cor}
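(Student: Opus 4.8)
The plan is to read the statement directly off the results about $\espir_1$ already established in Section 4, together with the identification $\mathcal{P}_{p,q}(n) = \espir_1(n)$ coming from Theorem \ref{thm:extremalspiral}. Recall that Theorem \ref{thm:extremalspiral} says the spiral $S_{p,q}(n)$ minimizes $e_1$ among all $\{p,q\}$-animals with $n$ tiles, so $\mathcal{P}_{p,q}(n) = e_1(n) = \espir_1(n)$ by definition. Hence it suffices to control the increments $\espir_1(n+1) - \espir_1(n)$, and this is exactly the content of the Lemma in Section 4 that computes the jumps $\espir_1(n) - \espir_1(n-1)$ in terms of the gluing parameter of the tile $T_n$.

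First I would invoke that Lemma: for $q > 3$ and $n > 1$, the tile $T_n$ is either $1$-glued or $2$-glued to $S_{p,q}(n-1)$, giving $\espir_1(n) - \espir_1(n-1) = p-2$ or $p-4$ respectively; and for $q = 3$ and $n > 2$, $T_n$ is either $2$-glued or $3$-glued, giving $\espir_1(n) - \espir_1(n-1) = p-4$ or $p-6$ respectively. Reindexing $n \mapsto n+1$ yields precisely the two displayed lines of the Corollary once we substitute $\mathcal{P}_{p,q}$ for $\espir_1$. The only boundary point needing a separate remark is the first increment in the $q=3$ case, $n=1$: here $T_2$ is glued to the single tile $T_1$ along one edge, so the transition from $S_{p,3}(1)$ to $S_{p,3}(2)$ behaves differently, which is why the statement restricts to $n > 1$ there; for $q > 3$ the Lemma already covers all $n > 1$, i.e. all increments $\mathcal{P}_{p,q}(n+1) - \mathcal{P}_{p,q}(n)$ with $n \ge 1$.

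For the qualitative consequences phrased in the paragraph preceding the Corollary — that $\mathcal{P}_{3,q}$ changes by exactly $\pm 1$ for $q \ge 6$, that $\mathcal{P}_{p,q}$ is non-decreasing when $p=4$ or $\{p,q\}=\{6,3\}$, and strictly increasing otherwise — one simply plugs the relevant values of $p$ into the two formulas: $p=3$ with $q \ge 6$ gives increments in $\{p-4,p-6\} = \{-1,-3\}$... here I would double-check the sign conventions against the $\{3,6\}$ Euclidean case of Corollary \ref{cor:Harary}, where $\mathcal{P}_{3,6}(n+1)-\mathcal{P}_{3,6}(n) = \pm 1$ is known, to make sure the intended reading is that for $p=3$, $q\ge 6$ the jump is $p-2$ or $p-4$ from the $q>3$ line (i.e. $+1$ or $-1$); $p=4$ gives $\{2,0\}$ (non-decreasing); $p=6,q=3$ gives $\{p-4,p-6\} = \{2,0\}$ (non-decreasing); and every other admissible pair makes both possible increments strictly positive.

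The main obstacle is essentially bookkeeping rather than mathematical depth: one must be careful that the Lemma's hypotheses ($n>1$ for $q>3$, $n>2$ for $q=3$) line up correctly with the ranges claimed here after the index shift, and that no admissible $\{p,q\}$ with $(p-2)(q-2)\ge 4$ has been overlooked in the case split (in particular the pair $\{4,3\}$, $\{3,3\}$, $\{3,4\}$, $\{3,5\}$ etc. are excluded by the standing hypothesis, so $p=3$ forces $q\ge 6$ and $q=3$ forces $p\ge 6$, which is exactly what makes the "strictly increasing otherwise" clause clean). No genuinely hard step is involved; the proof is a one-line citation of the Section 4 Lemma plus Theorem \ref{thm:extremalspiral}.
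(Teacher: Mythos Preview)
Your proposal is correct and matches the paper's approach: the paper does not give a detailed proof but states the corollary as ``a direct consequence of Corollary~\ref{cor:e_2bounds} and Theorem~\ref{thm:extremalspiral},'' which is exactly the combination you invoke (the unnamed Lemma preceding Corollary~\ref{cor:e_2bounds} supplies the exact increment values, and Theorem~\ref{thm:extremalspiral} identifies $\mathcal{P}_{p,q}(n)$ with $\espir_1(n)$). Your bookkeeping on the index shift and the $q=3$, $n=1$ boundary case is more explicit than anything the paper writes down, and your self-correction on the $p=3$ case (using the $q>3$ line, increments $\pm 1$) lands in the right place.
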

In the hyperbolic case, the minimum perimeter is of the order of the number of tiles. Bounds for $\mathcal{P}_{p,q}(n)$ for sufficiently large $n$ can be obtained from the following corollary to Theorem \ref{thm:e_1spiral} bounding $e_2(n)$.

\begin{cor} %bounds for e_1, e_2 
For $p\ge 4$ and $n>\nk(3)$, we have
    \begin{align*}
        n+\frac{n-1}{q-2} \le e_2(n) \le n+ \frac{n-1}{q-3}. 
    \end{align*}
For $p=3$ and $n>\nk(4)$, we have
    \begin{align*}
        n+\frac{n-1}{q-3} \le e_2(n) \le n+ \frac{n-1}{q-4}. 
    \end{align*}
\end{cor}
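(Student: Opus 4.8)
The plan is to re-express the two-sided bound as a bound on the single ``surplus'' quantity $m$ that the spiral accumulates past its last complete layer, and then to reduce everything to a pair of inequalities in the layer parameters $\vk$ and $\nk$ alone. Fix $n$, let $k$ be the index with $\nk(k)\le n<\nk(k+1)$, and set $\ell=n-\nk(k)$; the hypothesis $n>\nk(3)$ (resp.\ $n>\nk(4)$ when $p=3$) forces $k\ge 3$ (resp.\ $k\ge 4$). From the proof of Theorem \ref{thm:e_1spiral}, $\espir_2(n)=\ek_2(k)+\ell+m$, where $m$ counts the perimeter vertices of $A_{p,q}(k)$ that become saturated in $S_{p,q}(n)$. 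Euler's formula for $A_{p,q}(k)$, whose perimeter is a cycle, gives $\ek_2(k)=\nk(k)-1+\vk_{int}(k)$, and since $A_{p,q}(k)$ is obtained from $A_{p,q}(k-1)$ by saturating all of the latter's perimeter vertices (Definition \ref{defn:completelayers}), $\vk_{int}(k)=\vk(k-1)$. Hence $e_2(n)=n-1+\vk(k-1)+m$, and the asserted bounds on $e_2(n)$ are equivalent to
\[
\frac{n-1}{q-2}-\bigl(\vk(k-1)-1\bigr)\ \le\ m\ \le\ \frac{n-1}{q-3}-\bigl(\vk(k-1)-1\bigr),
\]
with $q-3,\,q-4$ in place of $q-2,\,q-3$ when $p=3$.

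Next I would pin down $m$ using the characterization (\ref{eqn:formula-m}). By Remark \ref{rmk:vdeg} every perimeter degree $d_{k,i}$ of $A_{p,q}(k)$ lies in $\{2,3\}$ when $p\ge4$ (in $\{3,4\}$ when $p=3$), so each term $q-d_{k,i}$ appearing in (\ref{eqn:formula-m}) lies between $q-3$ and $q-2$ (between $q-4$ and $q-3$ when $p=3$). Feeding these uniform bounds into (\ref{eqn:formula-m}) gives, for $p\ge4$,
\[
\frac{\ell-1}{q-2}-1\ \le\ m\ \le\ \frac{\ell-1}{q-3}
\]
(the lower estimate being trivial when $m=0$), and the analogue with shifted denominators when $p=3$. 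Substituting these into the displayed inequalities and using $n-1=\nk(k)+\ell-1$, all $\ell$-dependence cancels, and it remains to prove: for $p\ge4$ and every $k\ge3$,
\[
(q-3)\bigl(\vk(k-1)-1\bigr)\le\nk(k)-1
\qquad\text{and}\qquad
(q-2)\bigl(\vk(k-1)-2\bigr)\ge\nk(k);
\]
and for $p=3$ and every $k\ge4$, the same with $q-4,\,q-3$ in place of $q-3,\,q-2$.

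The final step is to verify these four inequalities against the closed forms of Theorem \ref{thm:layers}. For the Euclidean members ($\{4,4\}$ in the first case, $\{3,6\}$ in the second) $\vk$ and $\nk$ are quadratic in $k$, and the inequalities reduce to elementary quadratic estimates that hold precisely for $k\ge3$ (resp.\ $k\ge4$) — which is exactly what the hypotheses $n>\nk(3)$, $n>\nk(4)$ encode. For the hyperbolic members, substitute $\vk(k-1)=\frac{p}{t-2}(\alpha^{k-1}+\alpha^{1-k}-2)$ and $\nk(k)=1+\frac{p(q-2)}{(\alpha-1)\sqrt{t^2-4}}(\alpha^{k}+\alpha^{1-k}-\alpha-1)$, together with the identities $\sqrt{t^2-4}=\alpha-\alpha^{-1}$ and $t-2=(\alpha-1)^2/\alpha$; each inequality then takes the form $g(k)\ge0$ with $g(k)=A\alpha^{k-1}+B\alpha^{1-k}+C$. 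After clearing the common factor, the leading coefficient $A$ is positive in every case: for the ``$\ge$'' inequalities this is automatic when $p\ge4$ and amounts to $\alpha<q-3$ when $p=3$, while for the ``$\le$'' inequalities it amounts to $\alpha>q-3$ when $p\ge4$ and to $\alpha>\tfrac{q-4}{2}$ when $p=3$ — all immediate from $t-1<\alpha<t$ and $t=(p-2)(q-2)-2$. Since $g(k+1)-g(k)=(\alpha-1)(A\alpha^{k-1}-B\alpha^{-k})$, $g$ is nondecreasing once $\alpha^{2k-1}\ge B/A$, so for all $k\ge3$ (resp.\ $k\ge4$) the inequality follows from the single base case.

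I expect this last step to be where the real work lies. The reduction to the $k$-inequalities is mechanical given Theorem \ref{thm:e_1spiral}, Remark \ref{rmk:vdeg}, and Euler's formula; but the hyperbolic verification requires carefully tracking the subdominant $\alpha^{1-k}$ and constant terms of $g$, confirming that the threshold where $g$ turns nondecreasing lies at or below $k=3$ (resp.\ $4$) so that a single base case suffices, and checking that base value $g(3)$ (resp.\ $g(4)$) is nonnegative — which is precisely the point at which the hypotheses $n>\nk(3)$ and $n>\nk(4)$ are used.
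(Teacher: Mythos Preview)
Your approach is essentially the same as the paper's. Both arguments start from $e_2(n)=\ek_2(k)+\ell+m$, bound $m$ via (\ref{eqn:formula-m}) using $d_{k,i}\in\{2,3\}$ (or $\{3,4\}$ when $p=3$), and reduce everything to a two-sided inequality on the layer data that the paper states as $\nk(k)+\tfrac{\nk(k)}{q-2}+1\le \ek_2(k)\le \nk(k)+\tfrac{\nk(k)}{q-3}$ and simply declares ``can be checked''. Your rewriting $\ek_2(k)=\nk(k)-1+\vk(k-1)$ via Euler turns that pair into exactly your inequalities $(q-2)(\vk(k-1)-2)\ge\nk(k)$ and $(q-3)(\vk(k-1)-1)\le\nk(k)$ (note: the reduction actually gives $\le\nk(k)$, not $\le\nk(k)-1$ as you wrote --- a harmless slip). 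The monotonicity-plus-base-case verification you sketch for the hyperbolic closed forms is additional detail the paper omits, but the overall structure is the same.
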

\begin{proof}
For $p \ge 4$, it can be checked that $\nk(k)+\dfrac{\nk(k)}{q-2} +1\le \ek_2(k) \le  \nk(k)+\dfrac{\nk(k)}{q-3}$, for $k\geq 3$. As usual, given $n$ choose $k$ such that $\nk(k)< n < \nk(k+1)$. In this case, each $d_{k,i}=2$ or $d_{k,i}=3$ so, Formula $(\ref{eqn:formula-m})$ gives the following bound
   \begin{align*}
        m(q-3) &\le \sum_{i=1}^m (q-d_{k,i}) \le n-\nk(k)-1 < \sum_{i=1}^{m+1} (q-d_{k,i})  \le (m+1)(q-2), \\
        \Longrightarrow \hspace{1cm}  & \frac{n-\nk(k)-1}{q-2}-1  \le m \le \frac{n-\nk(k)-1}{q-3} 
    \end{align*}
Then, the bounds for $\ek_2(k)$ and $m$, together with the formula $e_2(n)=n -\nk(k)+\ek_2(k)+m$ gives the result. The analysis for $p=3$ is analogous.
\end{proof}

We further conjecture a sharp limit for this ratio, based on the ratio for the $A_{p,q}(k)$ sequence.
%It would be great to have the conjecture in terms of e_2
\begin{conj}
$$ \lim_{n\to \infty} \frac{e_2(n)}{n} =   \lim_{k\to \infty} \frac{\ek_2(k)}{\nk(k)} =  \frac{q-1}{q-2} + \frac{1}{\alpha(q-2)}  $$
\end{conj}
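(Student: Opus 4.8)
The plan is to dispatch the second equality by a direct computation with the closed forms in Equation~(\ref{eq:allAparams}), and then to derive the first equality from Theorem~\ref{thm:e_1spiral} together with a low-discrepancy estimate for the perimeter degree sequences $d_k$. For the second equality, the leading terms of $\ek_2(k)$ and $\nk(k)$ as $k\to\infty$ are $\tfrac{p(\alpha q-\alpha+1)}{(\alpha-1)\sqrt{t^2-4}}\,\alpha^{k-1}$ and $\tfrac{p(q-2)}{(\alpha-1)\sqrt{t^2-4}}\,\alpha^{k}$, so that
\[
\lim_{k\to\infty}\frac{\ek_2(k)}{\nk(k)}=\frac{\alpha q-\alpha+1}{(q-2)\alpha}=\frac{q-1}{q-2}+\frac{1}{\alpha(q-2)}=:L .
\]
The same computation supplies the refinements I will use: $\ek_2(k)=L\,\nk(k)+O(1)$; and, writing $N_k=\nk(k+1)-\nk(k)$ and $L_k=\ek_1(k)$, one has $N_k\sim\tfrac{p(q-2)}{\sqrt{t^2-4}}\alpha^{k}$ and $L_k\sim\tfrac{p(\alpha+1)}{\sqrt{t^2-4}}\alpha^{k-1}$, hence $L_k/N_k\to L-1=\tfrac{\alpha+1}{(q-2)\alpha}$.

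For the first equality, fix $n$ and let $k$ be the layer index with $\nk(k)\le n<\nk(k+1)$; then $k\to\infty$ as $n\to\infty$, and since $\nk(k+1)/\nk(k)\to\alpha$ we have $n=\Theta(\nk(k))=\Theta(N_k)$. Put $s=n-\nk(k)\in[0,N_k)$. By the proof of Theorem~\ref{thm:e_1spiral}, $\espir_2(n)=\ek_2(k)+s+m$, where $m=m(n)$ is the number of perimeter vertices of $A_{p,q}(k)$ saturated in $S_{p,q}(n)$, determined by
\[
\sum_{i=1}^{m}(q-d_{k,i})\ \le\ s-1\ <\ \sum_{i=1}^{m+1}(q-d_{k,i}),
\]
together with the identity $\sum_{i=1}^{L_k}(q-d_{k,i})=N_k-1$ (saturating the whole $k$-th perimeter, in spiral order, uses exactly $N_k$ tiles). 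Plugging $\ek_2(k)=L\,\nk(k)+O(1)$ into $\espir_2(n)=\ek_2(k)+s+m$, the conjecture reduces to the single uniform estimate $m(n)=(L-1)\,s+o(n)$ as $k\to\infty$, over $s\in[0,N_k)$: once this holds, $\espir_2(n)=L\nk(k)+Ls+o(n)=Ln+o(n)$, and dividing by $n$ finishes the proof.

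That estimate is a discrepancy statement about $d_k$. Setting $\Sigma_m=\sum_{i=1}^m(q-d_{k,i})=(q-2)m-\#\{i\le m:d_{k,i}=3\}$ (and analogously with degrees $3,4$ when $p=3$), it is equivalent to $\bigl|\Sigma_m-\tfrac{N_k-1}{L_k}m\bigr|=o(N_k)$ uniformly in $0\le m\le L_k$; inverting the near-linear dependence of $\Sigma_m$ on $m$ then gives $m(n)=\tfrac{L_k}{N_k}s+o(N_k)=(L-1)s+o(n)$. To prove the discrepancy bound I would use the description of $d_k$ from Section~\ref{section:layers}: for $p,q\ge 4$ the word $d_k$ is $\sigma^{k-1}$ applied letterwise to $d_1=(2,\dots,2)$, where $\sigma$ is the non-erasing substitution on $\{2,3\}$ read off from the substitution rules stated before Example~\ref{ex:degree-sequence}; for $p=3$ one gets a non-erasing substitution on $\{3,4\}$. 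In both cases $\sigma$ is primitive, its incidence matrix has Perron eigenvalue $\alpha$, and the Galois conjugate of $\alpha$ is $\beta=1/\alpha\in(0,1)$, so $\alpha$ is a Pisot number and $\sigma$ is a Pisot substitution; for such substitutions the number of occurrences of a given letter in a prefix of length $m$ of $\sigma^{k-1}(a)$ deviates from $m$ times the letter frequency by $O(k)$, which is $o(N_k)$ since $N_k\asymp\alpha^{k}$ grows exponentially. This is exactly the Sturmian/bounded-remainder mechanism alluded to in Section~7. The case $q=3$ needs a little extra care because the rule ``erase every $3$'' is not letter-to-word; it can be handled either by recoding $d_k$ on the alphabet of consecutive degree pairs, turning the rule into a genuine primitive substitution, or by passing to the dual, where Equation~(\ref{eqn:interior-edges}) applied to $S_{p,3}(n)$ (for which $c'=1$ and $e_{2,0}$ is bounded by Lemma~\ref{lem:e_2,0bound}) expresses $e_2(n)$ as an explicit affine function of $e_2$ of the dual $\{3,p\}$-spiral, reducing it to the $p=3$ analysis.

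The main obstacle is precisely this uniform discrepancy estimate; everything else is bookkeeping with formulas already established. Concretely, one must either invoke the theory of Pisot substitutions after verifying that the incidence matrices above are primitive, or give a self-contained induction on $k$ that tracks prefix sums of $d_k$ through one application of the substitution rules of Section~\ref{section:layers}: each letter is replaced by a block of fixed composition, so the prefix sums telescope with an $O(1)$ error per level, accumulating to the required $O(k)$. Making the $q=3$ recoding (or, alternatively, the dual reduction) run cleanly is the only other point that will need genuine attention.
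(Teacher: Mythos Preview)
The statement you are addressing is labelled a \emph{Conjecture} in the paper, not a theorem; the authors write ``We further conjecture a sharp limit for this ratio'' and offer no proof, only the remark elsewhere in Section~7 that the substitution rules ``hint at the theory of Sturmian words as a promising approach.'' There is thus no proof in the paper to compare your proposal against.

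On the merits of the proposal itself: the second equality is indeed a routine computation from Equation~(\ref{eq:allAparams}), and your reduction of the first equality to a uniform discrepancy bound $\bigl|\Sigma_m-\tfrac{N_k-1}{L_k}m\bigr|=o(N_k)$ along $d_k$ is correct. The Pisot route is viable: for hyperbolic $p,q\ge4$ (respectively $p=3$, $q\ge7$) the incidence matrix read off from Proposition~\ref{prop:recurrence} has strictly positive entries, hence is primitive, with eigenvalues $\alpha$ and $\beta=1/\alpha\in(0,1)$; standard results on Pisot substitutions then yield bounded letter-frequency discrepancy in prefixes, stronger than the $O(k)$ you state. The self-contained induction you sketch (each application of $\sigma$ introduces an $O(1)$ error from at most one partial block, accumulating to $O(k)$) also works and avoids external machinery. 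The only substantive work left is exactly what you flag: the $q=3$ case, where ``erase every $3$'' is not letter-to-word. Your recoding on pairs, or the dual reduction via Equation~(\ref{eqn:interior-edges}) and Lemma~\ref{lem:e_2,0bound}, are both reasonable ways to close this; either would have to be written out in detail. If carried through, your argument would settle the conjecture and go beyond what the paper establishes.
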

Recall that $e_2(n)$ increments by the gluing parameter of each tile. So when $q=3$, all tiles are either 2-glued or 3-glued, and thus $$\dfrac{e_2(n)}{n}=\dfrac{2n+(\# \text{ of 3-glued tiles})}{n} = 2+\dfrac{\# \text{ of 3-glued tiles}}{n}.$$ And similarly when $q\ge 4$ all tiles are either 1-glued or 2-glued, and the ratio is $$\frac{e_2(n)}{n}=1+\dfrac{\# \text{ of 2-glued tiles}}{n}.$$ So this limit is capturing the ratio of these maximally glued tiles in each case. If this limit holds, then all such ratios for the graph parameters of $S_{p,q}(n)$ will also have limits given by the corresponding subsequences for $A_{p,q}(k)$.

One application of finding exact formulas for $\mathcal{P}_{p,q}(n)$ is to explore extremal topological animals. Euclidean animals with minimum perimeter have provided the right shape to produce animals with maximally many holes. We expect that the same will hold true for hyperbolic animals. Represent the maximum number of holes that a $\{p,q\}$-animal with $n$ tiles can have by $f_{p,q}(n)$. Then, using techniques introduced in \cite{MaR}, the following topological isoperimetric inequality holds
    \begin{align}
        f_{p,q}(n)\leq \frac{ n p - 2(n-1) -\mathcal{P}_{p,q}(n + f_{p,q}(n))}{p}.
    \end{align}
    This also provides a measure of certain geometric efficiencies, which are optimized when $f_{p,q}(n)$ attains this upper bound. In the Euclidean case, much of the analysis in solving the question of how many holes an $n$-tile animal can have stemmed from understanding these efficiencies, and analyzing the gap between $f_{p,q}(n)$ and this bound \cite{MaR, malen2021extremalI, malen2021extremalII}.

Finally, it is also natural to ask about algebraic characterizations of combinatorial parameters of higher dimensional extremal animals. For example, one can consider face-minimization properties of structures that are built by gluing together $d$-dimensional polytopes along codimension-1 faces in $\mathbb{R}^d$ or $\mathbb{H}^d$. In these settings, the analogs of the complete layered animals are straightforward to construct, but it is not immediately clear, for instance, how to generalize the 2-dimensional spirals that interpolate between them.

\subsection*{Acknowledgements}
This project received funding from the European Union's Horizon 2020 research and innovation program under the Marie Sk\l odowska-Curie grant agreement No.~754462.

\bibliographystyle{abbrvnat}
\bibliography{holeyominoes}

\end{document}